\newtheorem{thm}{Theorem}[section]
\newtheorem{cor}[thm]{Corollary}
\newtheorem{lem}[thm]{Lemma}
\newtheorem{prop}[thm]{Proposition}
\theoremstyle{definition}
\newtheorem{rmk}[thm]{Remark}
\numberwithin{equation}{section}
\def\bysame{\leavevmode\hbox to3em{\hrulefill}\thinspace}
\newcommand{\ep}{\epsilon}
\newcommand{\vep}{\varepsilon}
\newcommand{\pa}{\partial}
\newcommand{\N}{\mathbb{N}}
\newcommand{\R}{\mathbb{R}}
\newcommand{\ms}{\mathbb{S}}
\newcommand{\mca}{\mathcal{A}}
\newcommand{\mcc}{\mathcal{C}}
\newcommand{\mcd}{\mathcal{D}}
\newcommand{\mch}{\mathcal{H}}
\newcommand{\mci}{\mathcal{I}}
\newcommand{\mcl}{\mathcal{L}}
\newcommand{\mco}{\mathcal{O}}
\newcommand{\mcu}{\mathcal{U}}
\newcommand{\mcv}{\mathcal{V}}
\newcommand{\tu}{\tilde{u}}
\newcommand{\tv}{\tilde{v}}
\newcommand{\tw}{\tilde{w}}
\newcommand{\whf}{\widehat{F}}
\newcommand{\whg}{\widehat{G}}
\newcommand{\wtg}{\widetilde{G}}
\newcommand{\wtu}{\widetilde{U}}
\newcommand{\wtv}{\widetilde{V}}
\newcommand{\wtmcu}{\widetilde{\mcu}}
\newcommand{\wtmcv}{\widetilde{\mcv}}
\renewcommand{\(}{\left(}
\renewcommand{\)}{\right)}
\begin{document}

\title[Minimal energy solutions to the fractional Lane-Emden system, I]{Minimal energy solutions to the fractional Lane-Emden \\
system, I: Existence and singularity formation}

\author{Woocheol Choi}
\address[Woocheol Choi]{School of Mathematics, Korea Institute for Advanced Study, Seoul 130-722, Republic of Korea}
\email{wchoi@kias.re.kr}

\author{Seunghyeok Kim}
\address[Seunghyeok Kim]{Departamento de Matem\'{a}tica, Pontificia Universidad Cat\'{o}lica de Chile, Avenida Vicu\~{n}a Mackenna 4860, Santiago, Chile}
\email{shkim0401@gmail.com}

\subjclass[2010]{Primary: 35R11, Secondary: 35A01, 35B33, 35B40, 35J47}
\keywords{Fractional Lane-Emden system, critical Sobolev hyperbola, minimal energy solution, asymptotic behavior}
\thanks{W. Choi is partially supported by POSCO TJ Park Foundation in Republic of Korea. S. Kim is supported by FONDECYT Grant 3140530 in Chile.}

\begin{abstract}
This is the first of two papers which study asymptotic behavior of minimal energy solutions to the fractional Lane-Emden system in a smooth bounded domain $\Omega$
\[(-\Delta)^s u = v^p, \quad (-\Delta)^s v = u^q \text{ in } \Omega \quad \text{and} \quad u = v = 0 \text{ on } \pa \Omega \quad \text{for } 0 < s < 1\]
under the assumption that the subcritical pair $(p,q)$ approaches to the critical Sobolev hyperbola.
If $p = 1$, the above problem is reduced to the subcritical higher-order fractional Lane-Emden equation with the Navier boundary condition
\[(-\Delta)^s u = u^{\frac{n+2s}{n-2s}-\ep} \text{ in } \Omega \quad \text{and} \quad u = (-\Delta)^{s \over 2} u = 0 \quad \text{for } 1 < s < 2.\]
The main objective of this paper is to deduce the existence of minimal energy solutions,
and to examine their (normalized) pointwise limits provided that $\Omega$ is convex.
As a by-product of our study, a new approach for the existence of an extremal function for the Hardy-Littlewood-Sobolev inequality is provided.
\end{abstract}

\maketitle

\section{Introduction}
Let $\Omega$ be a smooth bounded domain of $\R^n$, $s \in (0,1)$, $n > 2s$ and $(-\Delta)^s$ stand for the fractional Laplacian in $\Omega$,
defined in terms of the spectra of the Laplacian $-\Delta$ in $\Omega$ with zero Dirichlet boundary values on $\pa \Omega$ (often denoted as $(-\Delta|_{\Omega})^s$).
In a series of two papers, we will be interested in the existence and shape of minimal energy solutions to the following nonlinear nonlocal elliptic system
\begin{equation}\label{eq-main}
\begin{cases}
(-\Delta)^s u = v^p &\text{in}~\Omega,
\\
(-\Delta)^s v = u^q &\text{in}~\Omega,
\\
u,\, v > 0 &\text{in } \Omega,
\\
u = v = 0 &\text{in } \R^n \setminus \Omega
\end{cases}
\end{equation}
provided $q \ge p > 2s/(n-2s)$ and
\begin{equation}\label{eq-pq-e}
\ep := \frac{1}{p+1} + \frac{1}{q+1} - {n-2s \over n} > 0
\end{equation}
is small enough.

\medskip
In the last decade, analysis on a variety of nonlocal equations has been to the fore by lots of researchers.
Especially, the local interpretation on the fractional Laplacian in $\R^n$ due to Caffarelli and Silvestre \cite{CS} has brought about a significant development in the study on nonlocal problems,
since it allowed one to utilize well-known techniques for local equations in studying equations of the form $(-\Delta)^s u = f(u)$ where $f: \R \to \R$.
Similar extensions for the fractional Laplacian in bounded domains were soon devised in several works such as Cabr\'e-Tan \cite{CT}, Stinga-Torrea \cite{ST},
Capella-D\'{a}vila-Dupaigne-Sire \cite{CDDS}, Br\"andle-Colorado-de Pablo-S\'anchez \cite{BCDS1} and Tan \cite{T2},
and they have served as fundamental tools in regarding existence, regularity, the Morse index, etc. of solutions to nonlocal equations.
See e.g. \cite{ AC, BCDS2, CaS, CaS2, DDW, DDSV, FFV, FW, JLX,  SV, T} and references therein where such results are obtained with various types of $f$.

A few recent papers have been devoted to study nonlinear systems involving the fractional Laplacians.
For instance, Costa-Miyagaki-Squassina-Yang \cite{CMSY} investigated existence and asymptotic behavior of solutions to the fractional H\'enon systems when the domain is the unit ball.
Leite-Marcos \cite{LM} studied non-variational fractional elliptic system whose form looks like \eqref{eq-main} but the term $(-\Delta)^s v$ is replaced with $(-\Delta)^t v$ for some $t \in (0,1)$.
Some existence, nonexistence and uniqueness of positive viscosity solutions to the fractional Lane-Emden system were obtained in Leite-Montenegro \cite{LM2}.
Moreover, Quaas-Xia \cite{QX, QX2} derived Liouville type results for the fractional Lane-Emden systems in the half and whole spaces, respectively.
See also Quaas-Xia \cite{QX3} which considered existence results for nonlinear cooperative system with gradient terms.

\medskip
The {\it critical Sobolev hyperbola} for \eqref{eq-main} is given by
\begin{equation}\label{eq-sob-h}
\frac{1}{p+1} + \frac{1}{q+1} = \frac{n-2s}{n}.
\end{equation}
We say that a pair $(p,q)$ of positive numbers is {\it subcritical} ({\it critical, supercritical}) if $1/(p+1) + 1/(q+1) > (=, <) (n-2s)/n$, respectively.
In \cite{C}, the first author proved existence of a nontrivial solution to \eqref{eq-main} for the subcritical case and $s \in (0,1)$,
by adapting method of Hulshof-Van der Vorst \cite{HV} and Figueiredo-Felmer \cite{DF} which studied \eqref{eq-main} for $s = 1$.
These results were based on the generalized mountain pass theorem of Benci-Rabinowitz \cite{BR}.

The first objective of this paper is to find a minimal energy solution to \eqref{eq-main}.
For a fixed $s \in (0,1)$ and $(p, q)$ subcritical, the {\it energy functional} of \eqref{eq-main} is defined as
\begin{equation}\label{eq-func}
E_{p,q}(u,v) = \int_{\Omega} (-\Delta)^{\frac{s}{2}} u \cdot (-\Delta)^{\frac{s}{2}} v\, dx - \frac{1}{p+1} \int_{\Omega} v^{p+1} dx - \frac{1}{q+1} \int_{\Omega} u^{q+1} dx
\end{equation}
for $(u,v) \in \mch^s(\Omega) \times \mch^s(\Omega)$ where $\mch^s(\Omega)$ is the fractional Sobolev space whose precise definition is given in \eqref{eq-mvs}.
We say that $(u, v)$ is a {\it minimal energy solution} to \eqref{eq-main}
if it solves \eqref{eq-main} in the sense that it is a critical point of $E_{p,q}$,
and satisfies $E_{p,q}(u,v) \le E_{p,q}(u',v')$ for any other solution $(u',v')$ to \eqref{eq-main}.

On the other hand, it is of definite interest to know the shape of solutions to given nonlinear equations.
In particular, numerous studies have been done when some parameters of elliptic problems tend to a certain critical regime causing a loss of compactness of the given problems.
The second aim of this paper and the contents of the subsequent paper have the same spirit to this direction.
More precisely, we shall concern the asymptotic behavior of minimal energy solutions to the nonlocal problem \eqref{eq-main}
when the domain is convex and the pair of parameters $(p,q)$ gets close to the critical hyperbola.
Our results extend the seminal work of Guerra \cite{G} which conducted asymptotic analysis for \eqref{eq-main} on convex domains in the local case $s = 1$.
It is valuable to mention that the convexity assumption in \cite{G} is  partially removed by the first author in \cite{C3} very recently.
Our work can be also treated as a generalization of the results on the slightly subcritical Lane-Emden equation
\begin{equation}\label{eq-LE}
\begin{cases}
(-\Delta)^s u = u^{\frac{n+2s}{n-2s}-\ep} &\text{in } \Omega,
\\
u > 0 &\text{in } \Omega,
\\
u = 0 &\text{in } \R^n \setminus \Omega.
\end{cases}
\end{equation}
The asymptotic behavior of minimal energy solutions to \eqref{eq-LE} as $\ep \to 0$ was studied
in Han \cite{H} and Rey \cite{R} for $s = 1$, in Chou-Geng \cite{CG} for $s = 2$ (with the Navier boundary condition) and in Choi-Kim-Lee \cite{CKL} for $0 < s < 1$.
As it can be seen in Theorem \ref{thm-bi} below, the case $1 < s < 2$ is covered here as a corollary of our analysis.

\medskip
Throughout the paper, given any $s \in (0,1)$ and $n > 2s$, we fix a value $2s/(n-2s) < p < (n+2s)/(n-2s)$,
and for each $\ep > 0$, denote by $q = q_{\ep} \ge p$ the value satisfying \eqref{eq-pq-e}.
Then it is easy to check that $(p,q_{\ep})$ is subcritical and approaches to the critical hyperbola as $\ep \to 0$.
As it turns out, studying minimal energy solutions to \eqref{eq-main} is closely related to investigating the {\it Hardy-Littlewood-Sobolev inequality}:
There exists a number $C > 0$ depending only on $n, r$ and $\lambda$ such that
\begin{equation}\label{eq-hls}
\left\| |x|^{-\lambda} * f \right\|_{L^{r_1}(\R^n)} \le C \left\| f \right\|_{L^{r_0}(\R^n)} \quad \text{for all } f \in L^{r_0}(\R^n)
\end{equation}
where $0 < \lambda < n$ and $1 < r_0, r_1 < \infty$ satisfies $1/r_0 + \lambda/n = 1 + 1/r_1$.
For the critical pair $(p,q_0)$ satisfying \eqref{eq-sob-h}, we have
$p/(p+1) + (n-2s)/n = 1 + 1/(q_0+1)$,
and so the inequality \eqref{eq-hls} enables us to define the value $S_{p,q_0} \in (0,\infty)$ as
\begin{equation}\label{eq-sob}
S_{p,q_0} = \inf_{f \in C^{\infty}_c(\R^n) \setminus \{0\}} \frac{\|f\|_{L^{\frac{p+1}{p}}(\R^n)}} {g_{n,s} \left\| |x|^{-(n-2s)} * f \right\|_{L^{q_0+1} (\R^n)}}
\end{equation}
where the value $g_{n,s}$ denotes a constant appearing in the Green's function of the Dirichlet fractional Laplacian $(-\Delta)^s$ in \eqref{Green_half}.

\medskip
In our first theorem, the existence of a minimal energy solution for each subcritical pair $(p, q)$ is proved.
Also, we examine the limit of the minimal energy value as $(p, q_{\ep})$ tends to the critical hyperbola (that is, $\ep \to 0$).
\begin{thm}\label{thm-0}
Assume that $\Omega$ is a smooth bounded domain in $\R^n$, $s \in (0,1)$ and $n > 2s$.
Then for any pair $(p,q)$ in the subcritical regime (which satisfies $q \ge p > 2s/(n-2s)$), Eq. \eqref{eq-main} possesses a minimal energy solution $(u,v)$.
Moreover, if we let $(u_{\ep}, v_{\ep})$ be a minimal energy solution to \eqref{eq-main} with $q = q_{\ep}$ for each $\ep > 0$ small, then
\begin{equation}\label{eq-energy}
S_{p,q_\ep}(\Omega) := \frac{ \left\|(-\Delta)^s u_{\ep} \right\|_{L^{\frac{p+1}{p}}(\Omega)}}{\|u_{\ep}\|_{L^{q_{\ep}+1} (\Omega)}} \to S_{p,q_0} \quad \text{as } \ep \to 0
\end{equation}
and
\begin{equation}\label{eq-energy-2}
\lim_{\ep \to 0} E_{p,q_{\ep}}(u_{\ep}, v_{\ep})= {2s \over n} S_{p,q_0}^{n \over 2s}.
\end{equation}
\end{thm}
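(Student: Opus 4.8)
The plan is to recast \eqref{eq-main} through the inverse Dirichlet fractional Laplacian. Write $\mathcal{G} = (-\Delta)^{-s}$ on $\Omega$; its Green kernel $\mathcal{G}(x,y)$ is symmetric and positive, and the domain monotonicity of the Dirichlet heat kernel gives $\mathcal{G}(x,y) \le g_{n,s}|x-y|^{-(n-2s)}$, with $H_\Omega(x,y) := g_{n,s}|x-y|^{-(n-2s)} - \mathcal{G}(x,y) \ge 0$ continuous on compact subsets of $\Omega\times\Omega$. For a subcritical pair $(p,q)$ one has $\tfrac{1}{q+1} > \tfrac{p}{p+1} - \tfrac{2s}{n}$, so $\mathcal{G}\colon L^{(p+1)/p}(\Omega) \to L^{q+1}(\Omega)$ is compact, being dominated on the bounded domain $\Omega$ by the Riesz potential $I_{2s}$ and strictly subcritical. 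The idea is then to study the scale invariant quotient
\[
S_{p,q}(\Omega) \;=\; \inf\left\{ \frac{\|f\|_{L^{(p+1)/p}(\Omega)}}{\|\mathcal{G}f\|_{L^{q+1}(\Omega)}} \;:\; f \in L^{(p+1)/p}(\Omega)\setminus\{0\} \right\},
\]
which coincides with the quantity in \eqref{eq-energy} once one writes $u = \mathcal{G}f$ with $f = (-\Delta)^s u$. Given a minimizing sequence, replacing each term by its modulus does not increase the quotient (positivity of the kernel), and extracting a weak limit in $L^{(p+1)/p}(\Omega)$ — using the compactness of $\mathcal{G}$ for the denominator and weak lower semicontinuity for the numerator — shows that the infimum is attained at some $f \ge 0$, $f \not\equiv 0$. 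Its Euler--Lagrange equation, obtained with a Lagrange multiplier and the self-adjointness $\mathcal{G}^* = \mathcal{G}$, reads $f^{1/p} = c\,\mathcal{G}\big((\mathcal{G}f)^q\big)$ for some $c > 0$; putting $u := \mathcal{G}f$ and $v := c\,\mathcal{G}(u^q)$ and rescaling $(u,v) \mapsto (\lambda u, \mu v)$ suitably, the strong maximum principle and elliptic bootstrap give a positive classical solution $(u,v)$ of \eqref{eq-main}.

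To recognize it as a \emph{minimal} energy solution, note that testing the two equations of \eqref{eq-main} against $v$ and $u$ shows that \emph{any} solution satisfies
\[
\int_\Omega (-\Delta)^{s/2}u\,(-\Delta)^{s/2}v\,dx \;=\; \int_\Omega v^{p+1}\,dx \;=\; \int_\Omega u^{q+1}\,dx \;=:\; m,
\]
so that $E_{p,q}(u,v) = \theta_{p,q}\,m$ with $\theta_{p,q} := 1 - \tfrac{1}{p+1} - \tfrac{1}{q+1} > 0$. Since $f := v^p$ is admissible for $S_{p,q}(\Omega)$ with $\|f\|_{L^{(p+1)/p}}^{(p+1)/p} = m = \|\mathcal{G}f\|_{L^{q+1}}^{q+1}$, we get $m^{\theta_{p,q}} = \|f\|_{L^{(p+1)/p}}/\|\mathcal{G}f\|_{L^{q+1}} \ge S_{p,q}(\Omega)$, hence $E_{p,q}(u,v) \ge \theta_{p,q}\,S_{p,q}(\Omega)^{1/\theta_{p,q}}$, with equality precisely when $v^p$ realizes $S_{p,q}(\Omega)$. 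The solution built from the minimizer does realize it, so it is a minimal energy solution and $E_{p,q}(u,v) = \theta_{p,q}\,S_{p,q}(\Omega)^{1/\theta_{p,q}}$; in particular every minimal energy solution $(u_\ep,v_\ep)$ satisfies $\|(-\Delta)^s u_\ep\|_{L^{(p+1)/p}}/\|u_\ep\|_{L^{q_\ep+1}} = S_{p,q_\ep}(\Omega)$, the left-hand side of \eqref{eq-energy}.

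Because $\theta_{p,q_\ep} = \tfrac{2s}{n} - \ep$ by \eqref{eq-pq-e}, the identities just obtained reduce \eqref{eq-energy} and \eqref{eq-energy-2} to the single assertion $S_{p,q_\ep}(\Omega) \to S_{p,q_0} \in (0,\infty)$ as $\ep \to 0$. For the lower bound, extend any $f$ supported in $\Omega$ by zero; then $|\mathcal{G}f| \le \mathcal{G}|f| \le g_{n,s}|x|^{-(n-2s)} * |f|$ pointwise, so by Hölder on $\Omega$ (exponent gap $\alpha_\ep := \tfrac{1}{q_\ep+1} - \tfrac{1}{q_0+1} \to 0$) and then the Hardy--Littlewood--Sobolev inequality \eqref{eq-hls}--\eqref{eq-sob},
\[
\|\mathcal{G}f\|_{L^{q_\ep+1}(\Omega)} \;\le\; |\Omega|^{\alpha_\ep}\,\big\|g_{n,s}|x|^{-(n-2s)} * |f|\big\|_{L^{q_0+1}(\R^n)} \;\le\; |\Omega|^{\alpha_\ep}\,S_{p,q_0}^{-1}\,\|f\|_{L^{(p+1)/p}(\Omega)},
\]
whence $S_{p,q_\ep}(\Omega) \ge S_{p,q_0}\,|\Omega|^{-\alpha_\ep} \to S_{p,q_0}$. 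For the upper bound, fix $x_0 \in \Omega$; given $\eta > 0$, choose $F \in C^\infty_c(\R^n)$ with $F \ge 0$ and $\|F\|_{L^{(p+1)/p}}\big/\big(g_{n,s}\||x|^{-(n-2s)} * F\|_{L^{q_0+1}}\big) < S_{p,q_0} + \eta$, and test $S_{p,q_\ep}(\Omega)$ against $F_\delta := F((\cdot - x_0)/\delta)$, which is supported in $\Omega$ for $\delta$ small. Freezing $\delta$ and sending $\ep \to 0$ gives $\|\mathcal{G}F_\delta\|_{L^{q_\ep+1}(\Omega)} \to \|\mathcal{G}F_\delta\|_{L^{q_0+1}(\Omega)}$ since $\mathcal{G}F_\delta \in L^\infty(\Omega)$; then letting $\delta \to 0$, the standard concentration estimate — using $\mathcal{G}(x,y) = g_{n,s}|x-y|^{-(n-2s)} - H_\Omega(x,y)$ with $H_\Omega$ bounded near $(x_0,x_0)$, so that the regular-part and the exterior contributions are $O(\delta^n) = o\big(\delta^{np/(p+1)}\big)$ while $\|\mathcal{G}F_\delta\|_{L^{q_0+1}(\Omega)}$ and $\|F_\delta\|_{L^{(p+1)/p}(\R^n)}$ are both of order $\delta^{np/(p+1)}$ — yields $\|\mathcal{G}F_\delta\|_{L^{q_0+1}(\Omega)} = (1 + o(1))\big\|g_{n,s}|x|^{-(n-2s)} * F_\delta\big\|_{L^{q_0+1}(\R^n)}$. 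Combining, $\limsup_{\ep\to 0} S_{p,q_\ep}(\Omega) \le S_{p,q_0} + \eta$ for every $\eta > 0$, which finishes the argument.

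The main obstacle is the upper bound: there the Dirichlet Green kernel of $\Omega$ has to be compared quantitatively with the whole-space kernel $g_{n,s}|x|^{-(n-2s)}$, and the two limiting regimes — the exponent $q_\ep \to q_0$ and the concentration scale $\delta \to 0$ — have to be disentangled. Freezing $\delta$ and sending $\ep \to 0$ first, using continuity of $L^r(\Omega)$-norms in $r$ for the bounded function $\mathcal{G}F_\delta$, handles the first difficulty; what then remains is the routine, though delicate, bubble computation familiar from slightly subcritical problems. By contrast the lower bound and the energy bookkeeping are comparatively soft.
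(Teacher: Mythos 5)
Your proposal is correct and follows essentially the same route as the paper: invert $(-\Delta)^s$ to pose a variational problem for the best constant of a compact embedding between subcritical Lebesgue spaces, derive the Euler--Lagrange integral system and bootstrap regularity, identify the energy of \emph{any} solution with $\theta_{p,q}$ times the corresponding power of the quotient (the paper's Lemma \ref{lem-a-7}), and compute the limit of the best constant via H\"older for one bound and a concentrating test function for the other. The only notable difference is that you minimize $\|f\|_{L^{(p+1)/p}}/\|\mathcal{G}f\|_{L^{q+1}}$ with $f=v^p$, which matches \eqref{eq-sob} directly, whereas the paper maximizes the dual quotient $\Theta_{p,q}(\Omega)$ with $w=u^{q}$ and therefore needs the extra duality identity $S_{q_0,p}=S_{p,q_0}$; this is cosmetic rather than substantive.
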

\noindent To find a solution of \eqref{eq-main}, it may as well write the system as a single equation
\begin{equation}\label{eq-a-1}
\begin{cases}
(-\Delta)^s ((-\Delta)^s u)^{\frac{1}{p}} = u^q &\text{in } \Omega,
\\
u = (-\Delta)^{s} u = 0 &\text{in } \R^n \setminus \Omega
\end{cases}
\end{equation}
by substituting $v = ((-\Delta)^s u)^{1/p}$.
Then, for $s = 1$, the existence of a minimal energy solution to \eqref{eq-main} is easily deduced from
the fact that the embedding $W^{2,(p+1)/p}(\Omega) \hookrightarrow L^{q+1}(\Omega)$ is compact for any subcritical pair $(p,q)$.
Moreover, \eqref{eq-energy} was derived in \cite{W}.

On the other hand, there is a subtle issue in finding a suitable fractional Sobolev space other than the Hilbert spaces $\mathcal{H}^r(\Omega)$ (for some $r > 0$)
in order to derive the existence of minimal energy solutions to the nonlocal problem \eqref{eq-main}.
Besides we should be careful for the zero boundary condition when the function space does not guarantee a sufficient regularity.
For example, the boundary condition might be obscure when $s \in (0, 1/2]$ for the fractional space $\mathcal{H}^s (\Omega)$ since the trace operator is not well-defined in $\mathcal{H}^s(\Omega)$
(refer to \cite[Subsection 2.1]{CDDS} for a further discussion on it).
For these reasons, instead of working on Eq. \eqref{eq-a-1} directly,
we invert the operator $(-\Delta)^s$ to get an integral equation \eqref{eq-w-0} to which we find a solution.
After that, by proving the the regularity of the solution and taking $(-\Delta)^{-s}$ in the both sides of \eqref{eq-w-0},
we will finally obtain a minimal energy solution to \eqref{eq-a-1}.

\medskip
We next prove that minimal energy solutions to \eqref{eq-main} should blow up as $(p, q_{\ep})$ tends to the critical hyperbola.
In addition, we characterize the limit of normalized solutions, which reveals a deep relationship between the system \eqref{eq-main} and the Hardy-Littlewood-Sobolev inequality \eqref{eq-hls}.
\begin{thm}\label{thm-1-2}
Suppose that $\Omega$ is a smooth bounded domain in $\R^n$, $s \in (0,1)$, $n > 2s$, $p > 2s/(n-2s)$
and $(u_{\ep}, v_{\ep})$ is a minimal energy solution to \eqref{eq-main} with $q = q_{\ep}$ for each $\ep > 0$ small.
\begin{enumerate}
\item If we set
\begin{equation}\label{eq-max}
\lambda_{\ep} = \max_{x \in \Omega} u_{\ep}^{1 \over \alpha_{\ep}}(x) = u_{\ep}^{1 \over \alpha_{\ep}}(x_{\ep}) \quad \text {where } \alpha_{\ep} := \frac{2s(p+1)}{pq_{\ep}-1},
\end{equation}
then it holds that
\begin{equation}\label{eq-u-L^inf}
\lambda_{\ep} \textnormal{dist}(x_{\ep}, \pa \Omega) \to \infty.
\end{equation}
\item Define $w_{\ep} = u_{\ep}^{q_{\ep}}$ and
\begin{equation}\label{eq-tw}
\tw_{\ep}(x) = \begin{cases}
\lambda_{\ep}^{-\alpha_{\ep} q_{\ep}} w_{\ep} (\lambda_{\ep}^{-1} x +x_{\ep}) &\text{for } x \in \Omega_{\ep} := \lambda_{\ep}(\Omega- x_{\ep}), \\
0 &\text{for } x \in \R^n \setminus \Omega_{\ep}.
\end{cases}
\end{equation}
Then there exist sequences of positive small numbers $\{\ep_k\}_{k \in \N}$ and functions $\{\tw_{\ep_k}\}_{k \in \N}$,
and a function $W \in L^{(q_0+1)/q_0}(\R^n)$ such that $\ep_k \to 0$ as $k \to \infty$ and
\begin{equation}\label{eq-tw-c}
\lim_{k \to \infty} \int_{\R^n} |\tw_{\ep_k} - W|^{\frac{q_{\ep_k}+1}{q_{\ep_k}}} dx = 0.
\end{equation}
Furthermore, $W$ is a minimizer of the Hardy-Littlewood-Sobolev inequality \eqref{eq-sob}.
\end{enumerate}
\end{thm}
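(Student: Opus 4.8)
The plan is a concentration--compactness analysis of suitably rescaled solutions, driven by the variational data of Theorem~\ref{thm-0}, arranged so that part~(2) simultaneously produces a minimizer of \eqref{eq-sob} without invoking any classification of solutions to the critical system. \emph{Energy identities and blow-up:} pairing the first (resp.\ second) equation of \eqref{eq-main} with $v_\ep$ (resp.\ $u_\ep$) and using the symmetry of the Dirichlet form gives
\[ D_\ep := \int_\Omega u_\ep^{q_\ep+1}\,dx = \int_\Omega v_\ep^{p+1}\,dx = \int_\Omega (-\Delta)^{\frac{s}{2}} u_\ep \cdot (-\Delta)^{\frac{s}{2}} v_\ep\,dx, \]
so $E_{p,q_\ep}(u_\ep,v_\ep)=(\frac{2s}{n}-\ep)D_\ep$ and $S_{p,q_\ep}(\Omega)=D_\ep^{\frac{2s}{n}-\ep}$; with \eqref{eq-energy}--\eqref{eq-energy-2} this forces $D_\ep\to S_{p,q_0}^{n/(2s)}$. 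I then claim $\lambda_\ep\to\infty$: otherwise $\|u_\ep\|_{L^\infty(\Omega)}=\lambda_\ep^{\alpha_\ep}$ stays bounded (as $\alpha_\ep\to\alpha_0>0$), and inverting $(-\Delta)^s$ through its Green function $G_\Omega$ (which satisfies $G_\Omega(x,y)\le g_{n,s}|x-y|^{-(n-2s)}$) together with elliptic regularity makes $(u_\ep,v_\ep)$ precompact in $C(\overline\Omega)\times C(\overline\Omega)$, so a limit $(u_0,v_0)$ solves \eqref{eq-main} at the critical pair with $\int_\Omega u_0^{q_0+1}=S_{p,q_0}^{n/(2s)}>0$ --- but then $v_0^p$ attains the infimum in \eqref{eq-sob} while being supported in $\Omega$, contradicting the strict positivity on $\R^n$ of any such minimizer.

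\emph{Rescaling and a priori estimates.} Put $\tu_\ep(x)=\lambda_\ep^{-\alpha_\ep}u_\ep(\lambda_\ep^{-1}x+x_\ep)$ and $\tv_\ep(x)=\lambda_\ep^{-\beta_\ep}v_\ep(\lambda_\ep^{-1}x+x_\ep)$ on $\Omega_\ep$, with $\beta_\ep=\frac{2s(q_\ep+1)}{pq_\ep-1}$. Since $\alpha_\ep+2s=p\beta_\ep$ and $\beta_\ep+2s=q_\ep\alpha_\ep$, the system is scale invariant, so $(\tu_\ep,\tv_\ep)$ solves it on $\Omega_\ep$ in integral form against $G_{\Omega_\ep}(x,y):=\lambda_\ep^{-(n-2s)}G_\Omega(\lambda_\ep^{-1}x+x_\ep,\lambda_\ep^{-1}y+x_\ep)$, which still obeys $G_{\Omega_\ep}\le g_{n,s}|x-y|^{-(n-2s)}$ and converges to $g_{n,s}|x-y|^{-(n-2s)}$ on compacta whenever $\Omega_\ep\to\R^n$. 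By construction $0\le\tu_\ep\le1$, $\tw_\ep=\tu_\ep^{q_\ep}$, $\tu_\ep(0)=1$, and a change of variables gives $\int_{\R^n}\tw_\ep^{(q_\ep+1)/q_\ep}\,dx=\lambda_\ep^{\,n-\alpha_\ep(q_\ep+1)}D_\ep\le D_\ep$ for small $\ep$ (the exponent being negative). Then \eqref{eq-hls} bounds $\tv_\ep$ in $L^{p+1}(\R^n)$ and $\tu_\ep$ in $L^{q_0+1}(\R^n)$; interior regularity for $(-\Delta)^s$, applied to $(-\Delta)^s\tu_\ep=\tv_\ep^p$ with $0\le\tu_\ep\le1$, gives uniform local H\"older bounds on $(\tu_\ep,\tv_\ep)$; and the integral representation yields a uniform decay $\tu_\ep(x)+\tv_\ep(x)\le C(1+|x|)^{-(n-2s)}$.

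\emph{Concentration--compactness and conclusion.} Fix a sequence $\ep_k\to0$. One first shows that $\{\tv_{\ep_k}^p\}$, extended by zero, is an asymptotically minimizing family for \eqref{eq-sob}, and runs Lions' concentration--compactness for it: vanishing is excluded by $\tu_{\ep_k}(0)=1$ together with the uniform local H\"older bound, dichotomy by the strict subadditivity of the constrained infimum associated with \eqref{eq-sob} (a strictly concave power of the constraint level). Hence, after relabeling, $\tv_{\ep_k}^p\to f$ strongly in $L^{(p+1)/p}(\R^n)$ and a.e., with $f$ a minimizer of \eqref{eq-sob}; since $f>0$ a.e.\ on $\R^n$ while each $\tv_{\ep_k}^p$ is supported in $\Omega_{\ep_k}$, the sets $\Omega_{\ep_k}$ cannot converge to a half-space, i.e.\ $\lambda_{\ep_k}\textnormal{dist}(x_{\ep_k},\pa\Omega)\to\infty$, which (being valid along every subsequence) is \eqref{eq-u-L^inf}. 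Consequently $\Omega_{\ep_k}\to\R^n$, $G_{\Omega_{\ep_k}}\to g_{n,s}|x-y|^{-(n-2s)}$, so $\tu_{\ep_k}\to\tu:=g_{n,s}|x|^{-(n-2s)}*f$ in $L^{q_0+1}(\R^n)$ and a.e.; with $W:=\tu^{q_0}$ one passes to the limit to obtain $W\in L^{(q_0+1)/q_0}(\R^n)$, the convergence \eqref{eq-tw-c} (using $q_{\ep_k}\to q_0$ and $0\le\tu_{\ep_k}\le1$ to absorb the drifting exponent), and that $W$ (equivalently $f=\tv^p$) is a minimizer of \eqref{eq-sob}.

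\emph{Main obstacle.} The crux is the a priori package of the second paragraph for the \emph{nonlocal} rescaled system --- above all the uniform decay $\tu_\ep(x)\le C(1+|x|)^{-(n-2s)}$, which is precisely what prevents loss of mass at infinity --- and the claim that $\{\tv_{\ep_k}^p\}$ is asymptotically minimizing. Because the Rayleigh quotient of \eqref{eq-main} is scale invariant only on the critical hyperbola, this last claim is equivalent to the rate bound $\ep\log\lambda_\ep\to0$ (equivalently $\lambda_\ep^{\,\alpha_\ep(q_\ep+1)-n}\to1$), which, as in the local case, I expect to extract from a Pohozaev-type identity. Working with the domain Green function $G_{\Omega_\ep}$ instead of the Riesz kernel --- hence the need for sharp two-sided bounds on $G_\Omega$ and for control of their behavior as $\Omega_\ep\nearrow\R^n$ --- and the $\ep$-dependence of $q_\ep$ are the recurring technical nuisances; alternatively, part~(1) can also be obtained from a half-space Liouville theorem for the critical system, cf.\ Quaas--Xia \cite{QX}.
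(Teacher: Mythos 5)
Your overall architecture (rescale, pass to the limit in the Green-function/integral formulation, identify the limit as an HLS minimizer) is the right one, and your argument that $\lambda_\ep\to\infty$ matches the paper's Lemma \ref{lem-lam}. But there are two genuine gaps. The decisive one is the step you yourself flag: the claim that $\{\tv_{\ep_k}^p\}$ is asymptotically minimizing for \eqref{eq-sob}, which you correctly observe is equivalent to $\lambda_\ep^{\alpha_\ep(q_\ep+1)-n}\to 1$, and which you leave to an unproven ``Pohozaev-type identity''. Without it, the change of variables only gives $\int_{\R^n}\tw_\ep^{(q_\ep+1)/q_\ep}\,dx=\lambda_\ep^{\,n-\alpha_\ep(q_\ep+1)}D_\ep\le D_\ep$, and the limiting mass could a priori be strictly below $S_{p,q_0}^{n/(2s)}$; in that case neither the exclusion of dichotomy nor the identification of the limit as a minimizer goes through, so the concentration--compactness scheme is circular at this point. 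The paper closes this loop with no Pohozaev identity (Lemma \ref{lem-a-2}): setting $\mca=\liminf_{\ep\to0}\lambda_\ep^{n^2\ep/(2s)}\ge1$ (which is free, since $\lambda_\ep\to\infty$ and $\ep>0$), one has on one side $S_{q_0,p}^{n/(2s)}\ge\mca\,\|W\|_{L^{(q_0+1)/q_0}(D)}^{(q_0+1)/q_0}$ from the energy limit and Fatou, and on the other side $\|W\|_{L^{(q_0+1)/q_0}(D)}^{-\gamma_0}\le S_{q_0,p}^{-1}$ from testing the limiting integral inequality \eqref{eq-a-13} against approximations of $W$ and applying HLS; combining the two squeezes $\mca=1$ and forces equality everywhere, which simultaneously yields $D=\R^n$ (hence part (1)), no loss of mass \eqref{eq-tw-con}, and the minimality of $W$. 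You need either this squeeze or an actual proof of the rate bound.

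The second gap is that your ``a priori package'' is not available under the hypotheses of Theorem \ref{thm-1-2}. The uniform decay $\tu_\ep(x)\le C(1+|x|)^{-(n-2s)}$ is false when $2s/(n-2s)<p<n/(n-2s)$ (the limit profile $\mcu$ itself decays only like $|x|^{-(p(n-2s)-2s)}$, which is slower), and even the correct decay estimates (Proposition \ref{lem-gu}) are established in the paper only in Section \ref{sec-glo-bdd} under the additional assumptions that $\Omega$ is convex and $p\ge1$, via the moving-plane boundary control of Lemma \ref{lem-bb} and the Kelvin transform; Theorem \ref{thm-1-2} assumes neither. The paper's proof of part (2) deliberately avoids pointwise decay: it uses weak $L^{(q_0+1)/q_0}$-compactness of $\tw_\ep$, uniform interior H\"older bounds together with the boundary-distance estimate to get nontriviality of $W$ (Lemma \ref{lem-a-4}), and then upgrades to the strong convergence \eqref{eq-tw-c} by a Brezis--Lieb argument once the norm convergence is in hand. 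You should either restructure along these lines or supply convexity-free proofs of the decay and of the rate bound.
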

\noindent It is worth to remark that the result above provides a new proof for the existence of minimizer for the Hardy-Littlewood-Sobolev inequality.
Lieb \cite{Li} first proved the existence of the minimizer using the symmetric decreasing rearrangement argument.
Later, Carlen-Lieb \cite{CL} simplified Lieb's proof (refer also to Frank-Lieb \cite{FL}).
Lions \cite{LCC} also proved the existence by applying his concentration-compactness argument.
Our strategy is to construct the minimizer by normalizing a minimal energy solution to \eqref{eq-w-0} and then taking the limit $\ep \to 0$.
Even though our idea is simple and natural, this kind of approach has not appeared in the literature up to the best knowledge of the authors.
It would be worthwhile to extend our approach to cover every possible range of $(p,q,\lambda)$ in the Hardy-Littlewood-Sobolev inequality.

\medskip
Assuming that $\Omega$ is convex and $p \ge 1$ as well as $p > 2s/(n-2s)$, we next investigate
the asymptotic behavior of a solution family $\{(u_{\ep}, v_{\ep})\}_{\ep > 0}$ away from the singularity as $\ep \to 0$.
Let $G$ be Green's function of the fractional Laplacian $(-\Delta)^s$ with zero Dirichlet boundary condition (refer to Subsection \ref{subsec_green}).
For the single equation \eqref{eq-LE}, Choi-Kim-Lee \cite{CKL} proved that if $u_{\ep}$ is a minimal solution to \eqref{eq-LE},
then there exist a point $x_0 \in \Omega$ and a constant $C_0 > 0$ such that
\[u_{\ep} \to 0 \quad \text{and} \quad \|u_{\ep}\|_{L^{\infty}(\Omega)} u_{\ep} \to C_0 G(x,x_0) \quad \text{in } C^{\alpha}(\Omega \setminus \{x_0\})\]
as $\ep \to 0$ for any $\alpha \in (0,2s)$ (in fact, convexity of $\Omega$ is not required here).
For the coupled system \eqref{eq-main}, it will turn out that a similar phenomenon happens if $p \in [n/(n-2s), (n+2s)/(n-2s))$.
However, if $p < n/(n-2s)$, the situation changes drastically and one should introduce a new function $\wtg: \R^n \times \Omega \to \R$ defined by
\begin{equation}\label{eq-tg-1}
\begin{cases}
(-\Delta_x)^s \wtg(x,y) = G^p (x,y) &\text{for } x \in \Omega,
\\
\wtg(x,y) = 0 &\text{for } x \in \R^n \setminus \Omega
\end{cases}
\end{equation}
for each $y \in \Omega$ to handle this case.
In short, the {\it Serrin exponent} $n/(n-2s)$ serves as a threshold for the asymptotic behavior of $\{(u_{\ep}, v_{\ep})\}_{\ep > 0}$ as $\ep \to 0$.
\begin{thm}\label{thm-1}
Suppose that $\Omega$ is a smooth bounded convex domain in $\R^n$, $s \in (0,1)$, $n > 2s$, $p \ge 1$, $p > 2s/(n-2s)$ and $\{ (u_{\ep}, v_{\ep})\}_{\ep > 0}$
is a family of minimal energy solutions to \eqref{eq-main} with $q = q_{\ep}$.
We define the value $\lambda_{\ep} > 0$ and the point $x_{\ep} \in \Omega$ as in \eqref{eq-max}. If
\[\alpha_0 := n - 2s - {n \over p+1} \quad \text{and} \quad \beta_0 := {n \over p+1},\]
then there exists a point $x_0 \in \Omega$ such that $x_{\ep} \to x_0$,
\[\lambda_{\ep}^{\alpha_0} v_{\ep} \to C_1 G(\cdot,x_0) \quad \text{in } C^{0}(\Omega \setminus \{x_0\})\]
and
\[\begin{cases}
\lambda_{\ep}^{\beta_0} u_{\ep} \to C_2 G(\cdot,x_0) &\text{for } \frac{n}{n-2s} < p < \frac{n+2s}{n-2s},\\
\frac{\lambda_{\ep}^{\beta_0}}{\log \lambda_{\ep}} u_{\ep} \to C_3 G(\cdot,x_0) &\text{for } p = \frac{n}{n-2s},\\
\lambda_{\ep}^{p(n-2s-\beta_0)} u_{\ep} \to C_4 \wtg(\cdot,x_0) &\text{for } \frac{2s}{n-2s} < p < \frac{n}{n-2s} \text{ and } p \ge 1
\end{cases}
\text{in } C^{0}(\Omega \setminus \{x_0\})\]
as $\ep \to 0$. Here $C_1, \cdots, C_4 > 0$ are constants depending only on $n,\, s,\, p,\, \Omega$.
\end{thm}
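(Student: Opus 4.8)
The plan is to build on the concentration result of Theorem \ref{thm-1-2}, which already pins down the blow-up scale $\lambda_\ep$ and point $x_\ep$, and localizes the rescaled mass $\tw_\ep = u_\ep^{q_\ep}$ near a minimizer $W$ of the Hardy-Littlewood-Sobolev inequality. First I would convert \eqref{eq-main} to the integral form $u_\ep(x) = \int_\Omega G(x,y) v_\ep^{\,p}(y)\,dy$ and $v_\ep(x) = \int_\Omega G(x,y) u_\ep^{\,q_\ep}(y)\,dy$, so that for any fixed compact $K \subset \Omega \setminus \{x_0\}$ the kernel $G(x,\cdot)$ is smooth and bounded on a neighborhood of $K$. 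The whole theorem then reduces to tracking the asymptotics of the two mass integrals $\int_\Omega v_\ep^{\,p}$ and $\int_\Omega u_\ep^{\,q_\ep}$ together with the profile of where that mass sits. Using the change of variables $y = \lambda_\ep^{-1} z + x_\ep$ and the definitions $w_\ep = u_\ep^{q_\ep}$, $\tw_\ep$ from \eqref{eq-tw}, one gets $\int_\Omega u_\ep^{\,q_\ep}\,dy = \lambda_\ep^{\alpha_\ep q_\ep - n}\int_{\Omega_\ep}\tw_\ep\,dz$, and since $q_\ep \to q_0$ and $\alpha_\ep \to \alpha_0 + \beta_0 \cdot(\text{something})$ one checks $\alpha_\ep q_\ep - n \to -\alpha_0$; combined with $\tw_{\ep_k} \to W$ in $L^{(q_0+1)/q_0}$ and a uniform-integrability/tightness argument (the mass does not escape to infinity because $W$ is a genuine minimizer with finite mass), one obtains $\lambda_\ep^{\alpha_0}\int_\Omega u_\ep^{\,q_\ep}\,dy \to \int_{\R^n} W\,dz =: m_1 > 0$. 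Feeding this into the integral formula for $v_\ep$ and using continuity of $G(x,\cdot)$ at $x_0$ gives $\lambda_\ep^{\alpha_0} v_\ep(x) \to m_1\, G(x,x_0)$ pointwise, and equicontinuity of the family (from the smoothness of $G$ away from the diagonal plus the mass bound) upgrades this to $C^0(\Omega\setminus\{x_0\})$ convergence, with $C_1 = g_{n,s}$-normalized multiple of $m_1$.

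For the $u_\ep$ asymptotics one has to estimate $\int_\Omega v_\ep^{\,p}(y)\,dy$, and here the three regimes appear exactly as in Guerra's local analysis. Having just shown $v_\ep(y) \sim m_1\lambda_\ep^{-\alpha_0} G(y,x_0)$ away from $x_\ep$, and near $x_\ep$ that $v_\ep$ is governed by the rescaled profile $V := |x|^{-(n-2s)} * W$ (up to constants), write $\int_\Omega v_\ep^{\,p} = \int_{|y-x_\ep|<\delta} + \int_{|y-x_\ep|>\delta}$. On the outer region $v_\ep^{\,p} \sim \lambda_\ep^{-p\alpha_0} G(y,x_0)^p$, which is integrable iff $p\,(n-2s) < n$, i.e. $p < n/(n-2s)$. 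On the inner region the rescaling $y = \lambda_\ep^{-1}z + x_\ep$ turns the integral into $\lambda_\ep^{-p\alpha_0 \cdot(\cdots) - n}\int_{|z|<\delta\lambda_\ep} V(z)^p\,dz$, and $V$ decays like $|z|^{-(n-2s)}$ at infinity, so $\int_{|z|<R} V^p$ is bounded as $R\to\infty$ iff $p(n-2s) > n$, diverges logarithmically iff $p(n-2s) = n$, and diverges polynomially like $R^{\,n-p(n-2s)}$ iff $p(n-2s) < n$. Matching the powers of $\lambda_\ep$ in each case yields: for $p > n/(n-2s)$ the inner integral dominates and $\int_\Omega v_\ep^{\,p} \sim c\,\lambda_\ep^{-\beta_0}$ for the right exponent $\beta_0 = n/(p+1)$, so $\lambda_\ep^{\beta_0} u_\ep \to C_2 G(\cdot,x_0)$; for $p = n/(n-2s)$ a logarithmic factor $\log\lambda_\ep$ enters, giving the second alternative; and for $p < n/(n-2s)$ the outer region dominates, so $\int_\Omega v_\ep^{\,p}(y)\,dy \approx \lambda_\ep^{-p\alpha_0}\int_\Omega G(y,x_0)^p\,(\text{localized near }x_0)\,dy$, and substituting into $u_\ep(x) = \int_\Omega G(x,y) v_\ep^{\,p}(y)\,dy$ produces precisely $\int_\Omega G(x,y) G(y,x_0)^p\,dy = \wtg(x,x_0)$ by the defining equation \eqref{eq-tg-1}, whence $\lambda_\ep^{p(n-2s-\beta_0)} u_\ep \to C_4 \wtg(\cdot,x_0)$ once one checks $p\alpha_0 = p(n-2s-\beta_0)$.

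Two points remain. The convexity of $\Omega$ is needed to rule out boundary concentration and, more importantly, to get a \emph{lower} bound $\lambda_\ep \operatorname{dist}(x_\ep,\pa\Omega) \to \infty$ quantitatively with a usable rate — this is where a moving-plane / Kelvin-transform argument à la Gidas-Ni-Nirenberg (in its fractional incarnation) or the Pohozaev-type identity from Guerra's paper is invoked, ensuring the tail of $v_\ep$ and $u_\ep$ outside a fixed ball around $x_\ep$ genuinely behaves like a multiple of Green's function and that the error terms from replacing $G(x,y)$ by $G(x,x_0)$ are $o(1)$. The second point is that $x_\ep \to x_0$ for some interior $x_0$: along the subsequence $\{\ep_k\}$ one extracts a limit point $x_0 \in \overline\Omega$, the distance lower bound forces $x_0 \in \Omega$, and a standard argument (the normalized Green's-function limit must be a genuine solution of the linear problem, which determines $x_0$ via a balancing condition — in the convex case $x_0$ is characterized by $\nabla_x \phi(x_0) = 0$ where $\phi$ is the regular part of $G$) shows the limit is independent of the subsequence, so the full family converges. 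The main obstacle I expect is precisely this matched-asymptotics bookkeeping in the borderline and subcritical-$p$ regimes: one must show that the inner-profile contribution and the outer Green's-function contribution to $\int_\Omega v_\ep^{\,p}$ do not interfere, that the crossover scale $\delta$ can be chosen (possibly $\ep$-dependent, like $\delta = \lambda_\ep^{-\theta}$) so both pieces are controlled, and that the constants $C_2, C_3, C_4$ coming out are strictly positive and finite — this requires sharp pointwise bounds on $v_\ep$ interpolating between the rescaled profile $V$ near $x_\ep$ and $\lambda_\ep^{-\alpha_0}G(\cdot,x_0)$ far from it, which in the nonlocal setting demands careful use of the Green's function estimates from Subsection \ref{subsec_green} rather than the cruder comparison arguments available when $s = 1$.
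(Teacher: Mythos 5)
Your overall architecture matches the paper's: pass to the integral representation via Green's function, show $\lambda_\ep^{\alpha_0}u_\ep^{q_\ep}\to C_1\delta_{x_0}$ to get the $v_\ep$-limit, then split the analysis of $\int_\Omega G(x,y)v_\ep^p(y)\,dy$ into the three regimes according to whether the inner profile $\mcv^p$ is integrable ($p>n/(n-2s)$), logarithmically divergent ($p=n/(n-2s)$), or whether the outer Green's-function tail dominates and produces $\wtg$ ($p<n/(n-2s)$). The identification $\int_\Omega G(x,y)G^p(y,x_0)\,dy=\wtg(x,x_0)$ and the role of convexity (moving planes near $\pa\Omega$ to force $x_0\in\Omega$) are also as in the paper.

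The genuine gap is that the entire matched-asymptotics bookkeeping rests on a \emph{uniform global pointwise decay estimate} for the rescaled solutions — $\tv_\ep(x)\le C(1+|x|)^{-(n-2s)}$ and the corresponding $p$-dependent decay for $\tu_\ep$, uniformly in $\ep$ on all of $\Omega_\ep$ — and you only flag this as "the main obstacle" without supplying a proof mechanism. This is Proposition 5.3 of the paper and occupies all of Section 5: it is proved by passing to the Caffarelli--Silvestre extension, applying the Kelvin transform to move the decay question to a blow-up question at the origin, localizing to a half-ball, and running a Brezis--Kato-type iteration on the resulting integral inequalities using the doubly weighted (Stein--Weiss) Hardy--Littlewood--Sobolev inequality, with the smallness of $\|A_\ep\|_{L^{(q_0+1)/(q_0-1)}}$ on small balls (which comes from the strong $L^{(q_\ep+1)/q_\ep}$ convergence of $\tw_\ep$ in Theorem 1.2) absorbing the main term. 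Without this estimate none of your dominated-convergence steps closes: you cannot rule out mass of $v_\ep^p$ accumulating at intermediate scales between $\lambda_\ep^{-1}$ and $O(1)$, and in the borderline case $p=n/(n-2s)$ you need not merely an upper bound but a \emph{two-sided} estimate $\tv_\ep(x)\simeq g_{n,s}C_1|x|^{-(n-2s)}$ on the annulus $R\le|x|\le\lambda_\ep r$ (the paper's Appendix B) to extract the precise constant $C_3=(g_{n,s}C_1)^{n/(n-2s)}|\ms^{n-1}|$ in front of $\log\lambda_\ep$; an upper bound alone gives only $O(\log\lambda_\ep)$. A secondary overreach: the characterization of $x_0$ via a balancing condition on the regular part of $G$ is not established here (the paper explicitly defers it to the sequel) and is not needed for the statement; the convergence $x_\ep\to x_0$ is obtained only along a subsequence from the boundary estimate plus blow-up of $u_\ep(x_\ep)$.
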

\noindent The quantities $C_1, \cdots, C_4$ are evaluated explicitly in Section \ref{sec-Green}.
Furthermore, the limit of suitably rescaled solutions $(\tu_{\ep}, \tv_{\ep})$ (see \eqref{eq-tuv} for its precise definition) of $(u_{\ep}, v_{\ep})$ is computed in Corollary \ref{cor-tutv-1}.
We note that the convexity of $\Omega$ is used to exclude the possibility that $x_0 \in \pa \Omega$.

One of the key points in the proof of Theorems \ref{thm-1} is to find a global uniform pointwise estimate for $(\tu_{\ep}, \tv_{\ep})$ in $\ep > 0$.
Our main ingredient in the proof will be the Caffarelli-Silvestre extension, the Kelvin transform and the Sobolev inequality.
It is noteworthy that this is the only part where the Caffarelli-Silvestre extension is applied throughout the entire paper.
We will first localize the extended problem \eqref{eq-res} to get Eqs. \eqref{eq-bound-7} or \eqref{eq-bound-8} in the half-ball $B_+^{n+1}(0,r)$ for some small $r > 0$,
and then employ the Brezis-Kato type argument to its integral representation.
This idea works well for the system as well as the scalar equation, and also provides a neater proof compared to \cite{CKL} where the standard Moser iteration argument was used.

\medskip
The next step toward understanding asymptotic behavior of a family $\{ (u_{\ep}, v_{\ep})\}_{\ep > 0}$ of solutions to \eqref{eq-main} with $q = q_{\ep}$
would be to estimate the blow-up rate $\lambda_{\ep}$ in terms of $\ep$ and characterize the blow-up point $x_0 \in \Omega$ as a critical point of a certain function in $\Omega$.
While such a function is expected to the regular part of Green's function $G$ or $\wtg$ defined in \eqref{eq-tg-1}, ascertaining it is quite complicated due to the nonlocal aspect of the problem.
For example, it is not clear how to extract the regular part of $\wtg$ unlike the local case \cite{G}.
It will be fully addressed in the second paper.

\medskip
Once results on the fractional Lane-Emden system \eqref{eq-main} are obtained, we can deduce the same type of conclusions for the higher order fractional Lane-Emden equation as their immediate corollaries.
Indeed, if we set $p = 1$, the system \eqref{eq-main} is reduced to a single problem
\begin{equation}\label{eq-sin}
\begin{cases}
(-\Delta)^s u = u^q &\text{in } \Omega,
\\
u = (-\Delta)^{s \over 2} u = 0 &\text{in } \R^n \setminus \Omega
\end{cases}
\end{equation}
for $s \in (1,2)$, $n > 2s$ and $1 \le q < (n+2s)/(n-2s)$.
Observe that the function $\wtg$ defined in \eqref{eq-tg-1} becomes Green's function of $(-\Delta)^s$ with the Navier boundary condition, that is, a solution of
\[\begin{cases}
(-\Delta_x)^s \wtg(x,y) = \delta_y &\text{in } \Omega,
\\
\wtg(x,y) = (-\Delta)^{s \over 2} \wtg(x,y) = 0 &\text{in } \R^n \setminus \Omega
\end{cases}\]
for each fixed $y \in \Omega$. Here $\delta_y$ is the Dirac delta measure centered at $y$.
Hence the following theorem is a direct consequence of Theorems \ref{thm-0} and \ref{thm-1}.
\begin{thm}\label{thm-bi}
Assume that $\Omega$ is a smooth bounded domain in $\R^n$, $s \in (1,2)$ and $n > 2s$. Then for arbitrary $q \in [1, (n+2s)/(n-2s))$, Eq. \eqref{eq-sin} has a minimal energy solution.
Moreover, if we let $u_{\ep}$ be a minimal energy solution to \eqref{eq-sin} with $q = (n+2s)/(n-2s)-\ep$ for sufficiently small $\ep > 0$, the followings have the validity:
\begin{enumerate}
\item We have
\[\lim_{\ep \to 0} \left[{1 \over 2} \int_{\Omega} \((-\Delta)^{s \over 2} u_{\ep}\)^2 dx
- {1 \over q_{\ep}+1} \int_{\Omega} u_{\ep}^{q_{\ep}+1} dx \right] = S_{1,{n+2s \over n-2s}}^2\]
and
\[S_{1,{n+2s \over n-2s}-\ep}(\Omega) := \frac{ \left\|(-\Delta)^s u_{\ep} \right\|_{L^2(\Omega)}}{\|u_{\ep}\|_{L^{{2n \over n-2s}-\ep}(\Omega)}}
\to S_{1,{n+2s \over n-2s}} \quad \text{as } \ep \to 0.\]

\item If
\[\lambda_{\ep} = \max_{x \in \Omega} u_{\ep}^{2-(2n/s)\ep \over n-2s+2n\ep}(x)
= u_{\ep}^{2-(2n/s)\ep \over n-2s+2n\ep}(x_{\ep}),\]
then it holds that
\[\lambda_{\ep} \textnormal{dist}(x_{\ep}, \pa \Omega) \to \infty.\]
\end{enumerate}
Furthermore, if $\Omega$ is convex, then there exists a point $x_0 \in \Omega$ such that $x_{\ep} \to x_0$ and
\[\lambda_{\ep}^{n-2s \over 2} u_{\ep} \to C_5 \wtg(\cdot,x_0) \quad \text{in } C^0(\Omega \setminus \{x_0\})\]
as $\ep \to 0$. Here $C_5 > 0$ is a constant depending only on $n,\, s,\, p,\, \Omega$.
\end{thm}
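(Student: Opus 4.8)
The plan is to deduce Theorem \ref{thm-bi} from Theorems \ref{thm-0}, \ref{thm-1-2} and \ref{thm-1} by specializing to $p=1$; throughout we apply those three results with their free parameter taken to be $s/2 \in (1/2,1)$, and we use repeatedly the composition law $(-\Delta)^{s/2}\circ(-\Delta)^{s/2} = (-\Delta)^s$ for the spectral fractional Laplacian. First, set $p=1$ in \eqref{eq-main}. The first equation then forces $v = (-\Delta)^{s/2}u$ for every solution $(u,v)$, the second becomes $(-\Delta)^{s/2}\big((-\Delta)^{s/2}u\big) = u^q$, i.e. $(-\Delta)^s u = u^q$, and the conditions $u = v = 0$ in $\R^n\setminus\Omega$ turn into $u = (-\Delta)^{s/2}u = 0$ in $\R^n\setminus\Omega$; conversely, if $u$ solves \eqref{eq-sin} then $\big(u,(-\Delta)^{s/2}u\big)$ solves \eqref{eq-main} with $p=1$. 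This sets up a bijection between the solution sets of \eqref{eq-sin} and of \eqref{eq-main} with $p=1$, and a direct computation from \eqref{eq-func} shows that it carries $E_{1,q}$ to
\[ E_{1,q}\big(u,(-\Delta)^{s/2}u\big) = \frac12\int_\Omega\big((-\Delta)^{s/2}u\big)^2\,dx - \frac{1}{q+1}\int_\Omega u^{q+1}\,dx, \]
which is exactly the energy functional of \eqref{eq-sin}; hence the bijection preserves the minimal energy property. Consequently the existence assertion of Theorem \ref{thm-bi} and its part (1) are immediate from Theorem \ref{thm-0}: the range $q\in[1,(n+2s)/(n-2s))$ is precisely the subcritical regime $q\ge p=1$ for \eqref{eq-main} with parameter $s/2$, and the admissibility constraint reads $1=p>2(s/2)/(n-2(s/2)) = s/(n-s)$, which holds because $n>2s$. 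The hypothesis $s\in(1,2)$, equivalently $s/2\in(1/2,1)$, is precisely what makes the trace on $\mch^{s/2}(\Omega)$ well defined, so that the Navier condition in \eqref{eq-sin} is meaningful.

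Next comes the bookkeeping of exponents. With $p=1$ and parameter $s/2$, the quantity $\alpha_\ep$ of \eqref{eq-max} equals $2(s/2)\cdot 2/(q_\ep-1) = 2s/(q_\ep-1)$, and solving \eqref{eq-pq-e} (with $p=1$ and $s$ there replaced by $s/2$) gives $q_\ep = (n+2s-2n\ep)/(n-2s+2n\ep)$, whence
\[ \frac{1}{\alpha_\ep} = \frac{q_\ep-1}{2s} = \frac{2-(2n/s)\ep}{n-2s+2n\ep}, \]
which is exactly the exponent defining $\lambda_\ep$ in part (2) of Theorem \ref{thm-bi}. The statement $\lambda_\ep\,\textnormal{dist}(x_\ep,\pa\Omega)\to\infty$, and with it the forced blow-up of $\{(u_\ep,v_\ep)\}$, is then Theorem \ref{thm-1-2}(1) verbatim.

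For the limit profile one observes that $1=p<n/(n-2(s/2)) = n/(n-s)$ always, since $n>2s>s$. Thus, when $\Omega$ is convex, Theorem \ref{thm-1} applies in its third branch and produces $x_0\in\Omega$ with $x_\ep\to x_0$ and $\lambda_\ep^{p(n-2(s/2)-\beta_0)}u_\ep \to C_4\wtg(\cdot,x_0)$ in $C^0(\Omega\setminus\{x_0\})$, where $\beta_0 = n/(p+1) = n/2$, so that the exponent is $p(n-s-n/2) = (n-2s)/2$; this is the claimed convergence with $C_5 = C_4$. Convexity serves, as in Theorem \ref{thm-1}, to exclude $x_0\in\pa\Omega$. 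Finally, for $p=1$ the function $\wtg$ of \eqref{eq-tg-1} satisfies $(-\Delta)^{s/2}\wtg(\cdot,y) = G(\cdot,y)$ in $\Omega$ and $\wtg(\cdot,y)=0$ in $\R^n\setminus\Omega$; applying $(-\Delta)^{s/2}$ to both sides and using that $G(\cdot,y)$ is the Dirichlet Green's function of $(-\Delta)^{s/2}$ yields $(-\Delta)^s\wtg(\cdot,y) = \delta_y$ in $\Omega$ together with $\wtg(\cdot,y) = (-\Delta)^{s/2}\wtg(\cdot,y) = 0$ in $\R^n\setminus\Omega$, i.e. $\wtg$ is the Navier Green's function of $(-\Delta)^s$, as asserted.

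There is no genuine obstacle here: Theorem \ref{thm-bi} really is a corollary. The only two points deserving care are (i) justifying the identity $(-\Delta)^{s/2}\circ(-\Delta)^{s/2} = (-\Delta)^s$ and the equivalence of the two forms of the boundary conditions at the regularity the minimal energy solutions actually enjoy (this is where $s\in(1,2)$ enters), and (ii) propagating the substitution $p=1$ and the reparametrization $s\mapsto s/2$ consistently through all the exponents $\alpha_\ep$, $\alpha_0$, $\beta_0$ and through the normalization of the constant $S_{1,(n+2s)/(n-2s)}$ inherited from \eqref{eq-energy}--\eqref{eq-energy-2}. Both are routine.
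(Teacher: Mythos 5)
Your proposal is correct and is essentially the paper's own argument: the paper proves Theorem \ref{thm-bi} only through the remark preceding it, namely that with $p=1$ and fractional parameter $s/2$ the system \eqref{eq-main} reduces to \eqref{eq-sin} and $\wtg$ becomes the Navier Green's function of $(-\Delta)^s$, so the theorem is read off from Theorems \ref{thm-0}, \ref{thm-1-2} and \ref{thm-1}. Your exponent bookkeeping (in particular $1/\alpha_\ep = (2-(2n/s)\ep)/(n-2s+2n\ep)$ and $p(n-s-\beta_0)=(n-2s)/2$, and the observation that $p=1<n/(n-s)$ always forces the third branch of Theorem \ref{thm-1}) is exactly the verification the paper leaves implicit.
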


The rest of this paper is organized as follows.
In Section \ref{sec_prelim}, we review some preliminaries such as definitions of our fractional Laplacian $(-\Delta)^s$ and Green's function $G$.
In Sections \ref{sec-blow} and \ref{sec-4}, we prove Theorems \ref{thm-0} and \ref{thm-1-2}, respectively.
Section \ref{sec-glo-bdd} is devoted to provide a uniform pointwise bound for a rescaled solution $(\tu_{\ep}, \tv_{\ep})$ in $\ep > 0$.
Based on the result, we show in Section \ref{sec-Green} the convergence of the normalized functions of $\tu_{\ep}$ and $\tv_{\ep}$
to Green's function $G$ or $\wtg$ away from the blow-up point $x_0$, which proves Theorem \ref{thm-1}.
Appendices \ref{app-a} and \ref{app-b} present the proof of two technical lemmas and estimates for $\tv_{\ep}$ under the assumption that $p = n/(n-2s)$, respectively.

\bigskip
\noindent \textbf{Notations.}

\noindent - The letter $z$ represents a variable in the $(n+1)$-dimensional open upper half-space $\R_+^{n+1} := \R^n \times (0, \infty)$.
Also, it is written as $z = (x,t)$ with $x \in \R^n$ and $t > 0$.

\noindent - For any $x \in \R^n$ and $r > 0$, $B^n(x,r)$ and $B^{n+1}_+((x,0),r)$ are the $n$-dimensional ball
and the $(n+1)$-dimensional upper half-ball whose center is $x$ and radius is $r$, respectively.

\noindent - For any domain $B \subset \R^n$, $\chi_B$ denotes the characteristic function of $B$.

\noindent - For any measurable functions $f$ and $g$ in $\R^n$, we define
\[(f*g)(x) = \int_{\R^n} f(x-y) g(y) dy \quad \text{for } x \in \R^n.\]

\noindent - $C > 0$ is a generic constant that may vary from line to line.

\section{Preliminaries}\label{sec_prelim}
\subsection{Spectral Fractional Laplacians}
This subsection is devoted to the precise definition of the operators and spaces which are used throughout the paper.

For a smooth bounded domain $\Omega$ of $\R^n$, we denote by $\{ (\lambda_k, \phi_k) \}_{k=1}^{\infty}$
a sequence of the non-decreasing eigenvalues and corresponding $L^2(\Omega)$-normalized eigenvectors of the Dirichlet Laplacian $-\Delta$ in $\Omega$, solving
\[\begin{cases}
- \Delta \phi_k = \lambda_k \phi_k &\text{in}~ \Omega,\\
\phi_k = 0 &\text{on}~ \pa \Omega.
\end{cases}\]
Then, for $s \in (0,2)$, the {\it fractional Sobolev space} $\mch^s (\Omega)$ is defined as
\begin{equation}\label{eq-mvs}
\mch^s(\Omega) = \left\{ u = \sum_{k=1}^{\infty} a_k \phi_k \in L^2 (\Omega) : \sum_{k=1}^{\infty} a_k^2 \lambda_k^{s} < \infty \right\}.
\end{equation}
Moreover, we let the {\it (spectral) fractional Laplacian} $(-\Delta)^s : \mch^s (\Omega) \to \mch^s(\Omega) \simeq (\mch^s(\Omega))^*$ be
\[(-\Delta)^s \( \sum_{k=1}^{\infty} a_k \phi_k \) = \sum_{k=1}^{\infty} a_k \lambda_k^s \phi_k.\]

In order to utilize the Caffarelli-Silvestre type extension theorems for $s \in (0,1)$, and especially, to consider \eqref{eq-lc-1} instead of \eqref{eq-main},
we introduce a weighted Hilbert space $\mcd^{1,2}(\mcc; t^{1-2s})$ on the half-cylinder $\mcc := \Omega \times (0, \infty)$ which is the completion of
\[ C_{c,L}^{\infty}(\mcc)
:= \left\{ U \in C^{\infty}(\overline{\mcc}) : U = 0 \text{ on } \pa_L\mcc := \pa \Omega \times (0,\infty) \right\}\]
with respect to the norm
\[\|U\|_{\mcd^{1,2}(\mcc; t^{1-2s})} = \( \int_{\mcc} t^{1-2s} |\nabla U|^2 dx dt\)^{1 \over 2}.\]
Recall that it is verified in \cite[Proposition 2.1]{CDDS} that
\[\mch^s(\Omega) = \{u = \text{tr}|_{\Omega \times \{0\}}U: U \in \mcd^{1,2}(\mcc; t^{1-2s})\}.\]

\subsection{Green's functions} \label{subsec_green}
Given any $s \in (0,1)$, we define a function $G_{\R^{n+1}_{+}}$ by
\begin{equation}\label{Green_half}
G_{\R^{n+1}_{+}}((x,t),y) := \frac{ {g_{n,s}}}{|(x-y,t)|^{n-2s}}, \quad g_{n,s} := {\Gamma \({n-2s \over 2}\) \over \pi^{n/2} 2^{2s} \Gamma(s)}
\end{equation}
for every $x, y \in \R^n$ and $t > 0$.
Also, for each fixed $y \in \Omega$, we set $H_{\mcc}(\cdot,y) \in \mcd^{1,2}(\mcc; t^{1-2s})$ as the unique solution to the Dirichlet-Neumann problem
\[\begin{cases}
\text{div}\(t^{1-2s} \nabla H_{\mcc}(\cdot,y) \)= 0 &\text{in } \mcc,
\\
H_{\mcc}(\cdot,y) = \dfrac{g_{n,s}}{|\cdot-(y,0)|^{n-2s}} &\text{on } \pa_L \mcc,
\\
\pa_{\nu}^sH_{\mcc} (\cdot,y) = 0 &\text{on } \Omega \times \{0\}
\end{cases}\]
where
\[\pa_{\nu}^s G_{\mcc}((x,0), y) := - \kappa_s \lim_{t \to 0+} t^{1-2s} {\pa G_{\mcc} \over \pa t}((x,t), y), \quad \kappa_s := {\Gamma(s) \over 2^{1-2s} \Gamma(1-s)},\]
whose existence is guaranteed by a standard minimization argument (see \cite[Lemma 2.2]{CKL}). Then the difference
\begin{equation}\label{eq-green-decom}
G_{\mcc}(z,y) = G_{\R^{n+1}_{+}}(z,y) - H_{\mcc}(z,y) \quad \text{for } z \in \mcc,\ y \in \Omega
\end{equation}
is a solution of
\begin{equation}\label{eq-green}
\begin{cases}
\text{div} (t^{1-2s} \nabla G_{\mcc}(\cdot, y)) = 0 &\text{in } \mcc,
\\
G_{\mcc}(\cdot, y) = 0 &\text{on } \pa_L \mcc,
\\
\pa_{\nu}^s G_{\mcc}(\cdot, y) = \delta_y &\text{on } \Omega \times \{0\},
\end{cases}
\end{equation}
and so it can be called as {\it Green's function $G_{\mcc}$} on the half-cylinder $\mcc$.
By the classical strong maximum principle and the Hopf boundary lemma in \cite[Proposition 4.11]{CaS}, we have
\begin{equation}\label{eq-G}
0 < G_{\mcc}((x,t),y) < {g_{n,s} \over |(x-y,t)|^{n-2s}} \quad \text{for any } x, y \in \Omega \text{ and } t > 0.
\end{equation}

For any $f \in L^{\infty}(\Omega)$, we let
\begin{equation}\label{eq-2-29}
U(z) = \int_{\Omega} G_{\mcc}(z,y) f(y) dy \quad \text{for any } z \in \mcc.
\end{equation}
Employing the Lebesgue dominated convergence theorem, it is not so hard to check that $U \in \mcd^{1,2}(\mcc; t^{1-2s})$ solves
\[\begin{cases}
\text{div} (t^{1-2s} \nabla U) = 0 &\text{in } \mcc,
\\
U = 0 &\text{on } \pa_L \mcc,
\\
\pa_{\nu}^s U = f &\text{on } \Omega \times \{0\}.
\end{cases}\]
Therefore the Caffarelli-Silvestre type extension theorems imply that $u = U(\cdot, 0) \in \mch^{2s}(\Omega)$ satisfies
\[\begin{cases}
(-\Delta)^s u = f &\text{in } \Omega,\\
u = 0 &\text{on } \R^n \setminus \Omega.
\end{cases}\]
In this sense, one can say that $G(x,y) := G_{\mcc}((x,0), y)$ for any $x, y \in \Omega$ is
{\it Green's function} of the fractional Laplacian $(-\Delta)^s$ in $\Omega$ with zero Dirichlet boundary condition so that it holds
\[\begin{cases}
(-\Delta)^s G(\cdot,y) = \delta_y &\text{in } \Omega,
\\
u = 0 &\text{in } \R^n \setminus \Omega
\end{cases}\]
for each $y \in \Omega$, and $H(x,y) := H_{\mcc}((x,0),y)$ for every $x, y \in \Omega$ is the regular part of Green's function $G$ in $\Omega$.
By \cite[Lemma 2.4]{CKL}, $H(x,y) = H(y,x)$ is uniformly bounded for $(x,y) \in \Omega \times K$ where $K$ is an arbitrary compact subset of $\Omega$.

\subsection{Maximum principle}\label{subsec-max}
Here we state the maximum principle which serves as a valuable tool in getting a uniform bound
for dilated solutions $(\tu_{\ep}, \tv_{\ep})$ of \eqref{eq-main} (see Section \ref{sec-glo-bdd}).
Its proof can be found in \cite[Lemma 2.1]{CKL}.
\begin{lem}\label{lem-maximum}
Let $s \in (0,1)$, $\mcd$ be any bounded domain with a piecewise smooth boundary on $\overline{\R^{n+1}_{+}}$ and $U$ a weak solution of
\[\begin{cases}
\textnormal{div}(t^{1-2s} \nabla U) = 0 &\text{in } \mcd,
\\
U(x,t) = F(x,t) &\text{on } \pa_I \mcd := \pa \mcd \cap \R^{n+1}_{+},
\\
\pa_{\nu}^s U(x,0) = 0 &\text{on } \pa_B \mcd := \pa \mcd \cap (\R^n \times \{0\})
\end{cases}\]
for some function $F \in L^{\infty}(\pa_I \mcd)$.
Then we have
\[\sup_{(x,t) \in \mcd} |U(x,t)| \le \sup_{(x,t) \in \pa_I \mcd} |F(x,t)|.\]
\end{lem}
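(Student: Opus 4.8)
The plan is to run the classical Stampacchia energy argument for weak solutions of the degenerate operator $\mathrm{div}(t^{1-2s}\nabla\,\cdot\,)$ carrying a homogeneous conormal condition on $\pa_B\mcd$. Set $M:=\sup_{\pa_I\mcd}|F|$; we may assume $M<\infty$, as otherwise there is nothing to prove. It suffices to show that $U\le M$ a.e.\ in $\mcd$: applying this to $-U$, which is a weak solution of the same problem with $F$ replaced by $-F$ (so that $\sup_{\pa_I\mcd}|-F|=M$ as well), yields $U\ge -M$, and the two bounds together give the claim.

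To prove $U\le M$, consider the truncation $w:=(U-M)_+$. Since $U$ lies in the weighted energy space $H^1(\mcd;t^{1-2s})$ (the analogue on $\mcd$ of the space $\mcd^{1,2}(\mcc;t^{1-2s})$ introduced in Section~\ref{sec_prelim}, which is legitimate because $t^{1-2s}$ is locally integrable for $s\in(0,1)$), and truncation by a Lipschitz function preserves this space, we have $w\in H^1(\mcd;t^{1-2s})$ with $\nabla w=\chi_{\{U>M\}}\nabla U$ a.e. Moreover the trace of $U$ on $\pa_I\mcd$ equals $F$ and $F\le|F|\le M$ there, so the trace of $w$ on $\pa_I\mcd$ equals $(F-M)_+=0$. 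Hence $w$ is an admissible test function in the weak formulation of the problem, which---with the condition $\pa_\nu^s U=0$ on $\pa_B\mcd$ encoded as the natural (conormal) boundary condition---reads
\[
\int_{\mcd} t^{1-2s}\,\nabla U\cdot\nabla\varphi\,dx\,dt=0
\qquad\text{for every }\varphi\in H^1(\mcd;t^{1-2s})\text{ with }\varphi|_{\pa_I\mcd}=0.
\]
Taking $\varphi=w$ and using $\nabla U\cdot\nabla w=\chi_{\{U>M\}}|\nabla U|^2=|\nabla w|^2$ a.e., we get $\int_{\mcd}t^{1-2s}|\nabla w|^2\,dx\,dt=0$.

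Since $t^{1-2s}>0$ on $\R^{n+1}_+$, it follows that $\nabla w=0$ a.e.\ in $\mcd$. Because $\mcd$ is a bounded domain, its interior boundary portion $\pa_I\mcd$ is nonempty (a relatively open subset of $\overline{\R^{n+1}_+}$ must meet $\{t>0\}$, hence the maximum of $t$ over $\overline{\mcd}$ is attained on $\pa_I\mcd$), and $w$ has vanishing trace there, connectedness forces $w\equiv 0$; alternatively one invokes the weighted Poincar\'e inequality $\|w\|_{L^2(\mcd;t^{1-2s})}\le C\|\nabla w\|_{L^2(\mcd;t^{1-2s})}$, valid since $t^{1-2s}$ is a Muckenhoupt $A_2$ weight for $s\in(0,1)$. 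Either way $U\le M$ a.e., completing the proof. The one genuinely delicate point is the verification that $w=(U-M)_+$ belongs to the weighted energy space and has vanishing trace on $\pa_I\mcd$; once the weighted Sobolev framework already in place in Section~\ref{sec_prelim} is granted, the rest is the one-line energy identity above.
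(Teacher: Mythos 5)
Your proof is correct. The paper itself does not prove this lemma but defers to \cite[Lemma 2.1]{CKL}, and your Stampacchia truncation argument --- testing the weak formulation (with the conormal condition on $\pa_B\mcd$ built in as a natural boundary condition) against $(U-M)_+$, which lies in the weighted energy space and has vanishing trace on $\pa_I\mcd$, then concluding from $\int_{\mcd}t^{1-2s}|\nabla (U-M)_+|^2=0$ and the nontriviality of $\pa_I\mcd$ --- is precisely the standard route taken there; the delicate points you flag (truncation stability in $H^1(\mcd;t^{1-2s})$, which holds since $t^{1-2s}$ is an $A_2$ weight, and positive surface measure of $\pa_I\mcd$) are handled adequately.
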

The above lemma allows us show that the equation
\[\begin{cases}
\text{div}(t^{1-2s} \nabla U) = 0 &\text{in } \mcd,
\\
U(x,t) = 0 &\text{on } \pa_I \mcd,
\\
\pa_{\nu}^s U(x,0) = f(x) &\text{on } \pa_B \mcd
\end{cases}\]
admits Green's function $G_\mcd$ such that $G_\mcd(z,y) \le G_{\R^{n+1}_{+}}(z,y)$ for $(z,y) \in \mcd \times \pa_B \mcd$. Thus
\begin{equation}\label{eq-green-rep}
|U(x,0)| \le \int_{\pa_B \mcd} \left|G_\mcd ((x,0),y)\right| |f(y)| dy \le g_{n,s} \int_{\pa_B \mcd} \frac{|f(y)|}{|x-y|^{n-2s}}dy.
\end{equation}
By the Hardy-Littlewood-Sobolev inequality \eqref{eq-hls}, it follows that
\begin{equation}\label{eq-hl}
\|U(\cdot,0)\|_{L^{r_1} (\pa_B \mcd)} \le C \|f\|_{L^{r_0} (\pa_B \mcd)},
\end{equation}
for $1 < r_0 < r_1 < \infty$ such that $1/r_0 - 1/r_1 = 2s/n$.

\section{Proof of Theorem \ref{thm-0}}\label{sec-blow}
First of all, we study the existence of a minimal energy solution to \eqref{eq-main} provided $(p,q)$ is subcritical.
As mentioned before, to find a minimal energy solution to \eqref{eq-main}, it is useful to consider an integral equation
\begin{equation}\label{eq-w-0}
u=(-\Delta)^{-s}((-\Delta)^{-s}(u^q))^{p} \quad \text{in } \Omega,
\end{equation}
which is formally equivalent to \eqref{eq-main}. Here $(-\Delta)^{-s}$ is defined by
\begin{equation}\label{eq-rep}
(-\Delta)^{-s} u(x) = \int_{\Omega} G(x,y) u(y) dy \quad \text{for } x \in \Omega
\end{equation}
(see Subsection \ref{subsec_green} for the definition of $G$).
Furthermore, letting $w=u^q$ in \eqref{eq-w-0} leads to
\begin{equation}\label{eq-w-1}
w^{1 \over q} = (-\Delta)^{-s}((-\Delta)^{-s}w)^p \quad \text{in } \Omega,
\end{equation}
and this is what we actually deal with.

The main virtue of studying in this way is that it allows us to use $L^{(q+1)/q}(\Omega)$ as a proper function space
to find a non-negative solution $w$ of \eqref{eq-w-1} via a variational argument.
By applying an iterative embedding argument, we can prove that $w \in L^{\infty}(\Omega)$, which will enable us to invert the operator $(-\Delta)^{-s}$ in \eqref{eq-w-0}
to certify that $(u, v) = (w^{1/q}, (-\Delta)^{-s} w)$ is a minimal energy solution to \eqref{eq-main}.
\begin{lem}\label{lem-3-1}
Suppose that $(p,q)$ is a subcritical pair (satisfying $q \ge p > 2s/(n-2s)$).
Then \eqref{eq-w-1} possesses a positive solution $w \in L^{(q+1)/q}(\Omega)$ which attains
\begin{equation}\label{eq-Theta}
\Theta_{p,q}(\Omega) := \sup_{f \in L^{\frac{q+1}{q}}(\Omega)\setminus\{0\}} \frac{\|(-\Delta)^{-s} f\|_{L^{p+1}(\Omega)}} {\|f\|_{L^{\frac{q+1}{q}}(\Omega)}}.
\end{equation}
Additionally, it holds that $w \in L^{\infty}(\Omega)$ and
\begin{equation}\label{eq-energy-3}
S_{q_0, p}^{-1} \le \liminf_{\ep \to 0} \Theta_{p,q_{\ep}}(\Omega)
\quad \text{and} \quad
\Theta_{p,q_{\ep}}(\Omega) \le S_{q_0, p}^{-1} |\Omega|^{\frac{1}{q_{\ep} + 1} - \frac{1}{q_0 + 1}}.
\end{equation}
Here $(p, q_0)$ stands for a critical pair which satisfies \eqref{eq-sob-h}.
\end{lem}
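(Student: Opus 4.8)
The plan is to regard $\Theta_{p,q}(\Omega)$ as the operator norm of the Green operator $(-\Delta)^{-s}$ (see \eqref{eq-rep}) from $L^{(q+1)/q}(\Omega)$ to $L^{p+1}(\Omega)$, to obtain a nonnegative maximizer by the direct method, to convert its Euler--Lagrange equation into \eqref{eq-w-1} by a dilation, to bootstrap the maximizer to $L^{\infty}(\Omega)$, and finally to bracket $\Theta_{p,q_{\ep}}(\Omega)$ between multiples of the sharp Hardy--Littlewood--Sobolev constant.

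First I would observe that $T := (-\Delta)^{-s} \colon L^{(q+1)/q}(\Omega) \to L^{p+1}(\Omega)$ is \emph{compact}: by \eqref{eq-hl} it is bounded into $L^{r}(\Omega)$ with $1/r = q/(q+1) - 2s/n$ (into $C(\overline{\Omega})$ if this is $\le 0$), and subcriticality \eqref{eq-pq-e} is exactly the strict inequality $r > p+1$, so compactness follows from $L^{p}$-regularity for $(-\Delta)^{-s}$ together with the Rellich--Kondrachov theorem. A maximizing sequence $\{f_k\}$ with $\|f_k\|_{L^{(q+1)/q}(\Omega)} = 1$ then has a subsequence $f_k \rightharpoonup f$; since $|Tf_k| \le T|f_k|$ by positivity of $G$ one may take $f_k \ge 0$, hence $f \ge 0$, and $Tf_k \to Tf$ strongly in $L^{p+1}(\Omega)$, so $\|Tf\|_{L^{p+1}(\Omega)} = \Theta_{p,q}(\Omega) > 0$; since $\|f\|_{L^{(q+1)/q}(\Omega)} \le 1$ by weak lower semicontinuity, maximality forces $\|f\|_{L^{(q+1)/q}(\Omega)} = 1$, so $f$ attains \eqref{eq-Theta}. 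Writing the Lagrange multiplier condition and using $T = T^{*}$ (symmetry of $G$) and $f \ge 0$ gives $(-\Delta)^{-s}\big(((-\Delta)^{-s}f)^{p}\big) = \mu\, f^{1/q}$, and pairing with $f$ evaluates $\mu = \Theta_{p,q}(\Omega)^{p+1}$. Putting $w = cf$ with $c^{\,p - 1/q} = \Theta_{p,q}(\Omega)^{-(p+1)}$ (which uses $pq \ne 1$) turns this into exactly \eqref{eq-w-1}, and $w$ still attains \eqref{eq-Theta} since the ratio there is homogeneous of degree zero; finally $w \ge 0$, $w \not\equiv 0$, together with positivity of $G$, give $(-\Delta)^{-s}w > 0$, hence $((-\Delta)^{-s}w)^{p} > 0$, hence $w^{1/q} = (-\Delta)^{-s}(((-\Delta)^{-s}w)^{p}) > 0$ in $\Omega$.

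To obtain $w \in L^{\infty}(\Omega)$ I would iterate, using that \eqref{eq-G} gives $0 \le (-\Delta)^{-s}h \le g_{n,s}\, |\cdot|^{-(n-2s)} * \bar h$ for $h \ge 0$, so that $h \in L^{m}(\Omega)$ with $m < n/(2s)$ implies $(-\Delta)^{-s}h \in L^{m^{*}}(\Omega)$ with $1/m^{*} = 1/m - 2s/n$ by \eqref{eq-hls}, while $m > n/(2s)$ implies $(-\Delta)^{-s}h \in C(\overline{\Omega})$. Running $w = [(-\Delta)^{-s}(((-\Delta)^{-s}w)^{p})]^{q}$ through these two steps, the integrability exponents $m_k$ of $w$ satisfy $1/m_{k+1} = pq/m_k - 2sq(p+1)/n$ until an intermediate function becomes bounded, and an elementary computation shows that $1/m_{k+1} < 1/m_k$ with a fixed positive gap exactly when $1/m_k < 2sq(p+1)/(n(pq-1))$ in the case $pq > 1$ (and unconditionally when $pq \le 1$); subcriticality \eqref{eq-pq-e} is precisely the statement that the initial value $1/m_0 = q/(q+1)$ lies below this threshold. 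Hence after finitely many steps an intermediate function belongs to $C(\overline{\Omega})$, and therefore $w \in L^{\infty}(\Omega)$.

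For \eqref{eq-energy-3} I would argue as follows. For the upper bound, \eqref{eq-G} gives $|(-\Delta)^{-s}f| \le g_{n,s}\, (|\cdot|^{-(n-2s)} * |\bar f|)$, so \eqref{eq-hls} with the sharp constant $S_{q_0,p}$ (legitimate since $1/(q_0+1) + 1/(p+1) = (n-2s)/n$ by \eqref{eq-sob-h}) yields $\|(-\Delta)^{-s}f\|_{L^{p+1}(\Omega)} \le S_{q_0,p}^{-1} \|f\|_{L^{(q_0+1)/q_0}(\Omega)}$; since $\ep > 0$ forces $q_{\ep} < q_0$, hence $(q_{\ep}+1)/q_{\ep} > (q_0+1)/q_0$, Hölder's inequality on the bounded set $\Omega$ gives $\|f\|_{L^{(q_0+1)/q_0}(\Omega)} \le |\Omega|^{1/(q_{\ep}+1) - 1/(q_0+1)} \|f\|_{L^{(q_{\ep}+1)/q_{\ep}}(\Omega)}$, and the second inequality of \eqref{eq-energy-3} follows by taking the supremum over $f$. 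For the lower bound, fix $\eta > 0$ and choose $g \in C^{\infty}_c(\R^n)$ with $g \ge 0$, $\operatorname{supp} g \subset B^n(0,1)$ and $g_{n,s}\, \||\cdot|^{-(n-2s)} * g\|_{L^{p+1}(\R^n)} \ge (1-\eta) S_{q_0,p}^{-1} \|g\|_{L^{(q_0+1)/q_0}(\R^n)}$, possible by \eqref{eq-sob} (and the symmetry $S_{q_0,p} = S_{p,q_0}$ of the Hardy--Littlewood--Sobolev constant). For $x_0 \in \Omega$ and small $\delta > 0$ set $g_{\delta} = g((\cdot - x_0)/\delta)$, supported in $\Omega$. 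Using the decomposition \eqref{eq-green-decom} and the boundedness of $H$ recalled in Subsection~\ref{subsec_green}, one has $(-\Delta)^{-s}g_{\delta} = g_{n,s}\delta^{2s}\, (|\cdot|^{-(n-2s)} * g)((\cdot - x_0)/\delta) + O(\delta^{n})$ uniformly in $\Omega$, so, after restricting the $L^{p+1}$-norm to a fixed small ball about $x_0$ and changing variables, $\|(-\Delta)^{-s}g_{\delta}\|_{L^{p+1}(\Omega)} \ge \delta^{2s + n/(p+1)}\big(g_{n,s}\||\cdot|^{-(n-2s)} * g\|_{L^{p+1}(\R^n)} + o_{\delta}(1)\big) - C\delta^{n}$, whereas $\|g_{\delta}\|_{L^{(q_{\ep}+1)/q_{\ep}}(\Omega)} = \delta^{nq_{\ep}/(q_{\ep}+1)} \|g\|_{L^{(q_{\ep}+1)/q_{\ep}}(\R^n)}$. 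Because $nq_0/(q_0+1) = 2s + n/(p+1)$ and, by $p > 2s/(n-2s)$, the exponent $n - 2s - n/(p+1)$ is positive, letting $\ep \to 0$ (so $q_{\ep} \to q_0$) with $\delta$ fixed and then $\delta \to 0$ gives $\liminf_{\ep \to 0} \Theta_{p,q_{\ep}}(\Omega) \ge (1-\eta) S_{q_0,p}^{-1}$; letting $\eta \to 0$ yields the first inequality of \eqref{eq-energy-3}.

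The principal difficulty is the lower bound of \eqref{eq-energy-3}: one must verify that the regular part $H$ of Green's function contributes only $O(\delta^{n})$ and that the identity $nq_0/(q_0+1) = 2s + n/(p+1)$ together with $n - 2s - n/(p+1) > 0$ — that is, precisely $p > 2s/(n-2s)$ — makes this error negligible, so the standing hypothesis is used sharply here; the order of the two limits ($\ep \to 0$ before $\delta \to 0$) must also be arranged carefully. By comparison, the compactness of $(-\Delta)^{-s}$ throughout the subcritical range and the termination of the bootstrap are routine once the exponent arithmetic above is in place.
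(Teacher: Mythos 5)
Your proposal is correct and follows essentially the same route as the paper: compactness of $(-\Delta)^{-s}\colon L^{(q+1)/q}(\Omega)\to L^{p+1}(\Omega)$ plus the direct method and a Lagrange-multiplier/rescaling step for existence, the identical exponent recursion (with gap $(p+1)q\ep$ per step) for the $L^{\infty}$ bootstrap, H\"older together with \eqref{eq-G} and the sharp Hardy--Littlewood--Sobolev constant for the upper bound in \eqref{eq-energy-3}, and concentrating test functions with the regular part $H$ contributing a negligible $O(\delta^{n})$ error precisely because $p>2s/(n-2s)$ for the lower bound. The one point where you are thinner than the paper is the compactness itself: the paper does not invoke $L^{p}$-regularity and Rellich--Kondrachov (which for the spectral fractional Laplacian is a nontrivial input it deliberately avoids) but proves compactness directly in Lemma \ref{lem-cpt} by truncating the kernel $|x-y|^{-(n-2s)}$, handling the regular part $H$ on interior subdomains, and a diagonal argument; otherwise the two arguments coincide.
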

\begin{proof}
Since $(p,q)$ is subcritical, we see from Lemma \ref{lem-cpt} that
the embedding $(-\Delta)^{-s}:L^{(q+1)/q}(\Omega) \to L^{p+1}(\Omega)$ is compact,
and so there exists a nontrivial maximizer $w_1$ to \eqref{eq-Theta} such that $\|w_1\|_{L^{(q+1)/q}(\Omega)} = 1$.
Thanks to the fact that $(-\Delta)^{-s}$ has the positive kernel $G$, $|w_1| \in L^{(q+1)/q}(\Omega)$ is also a maximizer to \eqref{eq-Theta} so that $w_1$ can be assumed to be nonnegative.
Now there is a Lagrange multiplier $\mu > 0$ such that
\[\int_{\Omega}((-\Delta)^{-s} w_1)^p (x)(-\Delta)^{-s} \phi (x) dx = \mu \int_{\Omega} w_1^{\frac{1}{q}}(x) \phi (x) dx\]
for any $\phi \in L^{(q+1)/q}(\Omega)$. Because
\[\int_{\Omega}((-\Delta)^{-s} w_1)^p (x) (-\Delta)^{-s} \phi (x) dx = \int_{\Omega}(-\Delta)^{-s}((-\Delta)^{-s} w_1)^p  (x) \phi (x) dx,\]
we find that
\[(-\Delta)^{-s}((-\Delta)^{-s} w_1)^p = \mu w_1^{\frac{1}{q}} \in L^{q+1}(\Omega)
\quad \text{where } \mu = (\Theta_{p,q}(\Omega))^{p+1}.\]
By multiplying $w_1$ by a positive constant, we obtain a nontrivial solution $w$ to \eqref{eq-w-1}, which can be readily checked to be positive and still a maximizer of \eqref{eq-Theta}.

\medskip
We next claim that $w \in L^{\infty}(\Omega)$.
Assume that $w \in L^r(\Omega)$ for some $r \ge (q+1)/q$. Then, owing to estimate \eqref{eq-hl}, it holds that
\begin{itemize}
\item[-] $(-\Delta)^{-s}w \in L^{r_1(\Omega)}$ where $2s/n = 1/r - 1/r_1$,
\item[-] $((-\Delta)^{-s}w)^p \in L^{r_1/p}(\Omega)$,
\item[-] $(-\Delta)^{-s}\(((-\Delta)^{-s}w)^p\) \in L^{r_2}(\Omega)$ where $2s/n = p/r_1 - 1/r_2$,
\item[-] $\((-\Delta)^{-s}\(((-\Delta)^{-s}w)^p\)\)^{q} \in L^{r_3}(\Omega)$ with $r_3 = r_2/q$.
\end{itemize}
By using \eqref{eq-pq-e}, we discover
\[\frac{1}{r}- \frac{1}{r_3} \ge q \left[\frac{1-pq}{q+1} + \frac{2s(p+1)}{n}\right] = (p+1)q \ep.\]
Therefore $1/r - (p+1)q\ep \ge 1/r_3$.
Starting with $r = (q+1)/q$ and applying the above argument repeatedly, we conclude that $w \in L^r(\Omega)$ for any $r \in [1,\infty)$.
Inserting this into \eqref{eq-w-1} gives $w \in L^{\infty}(\Omega)$.

\medskip
Finally, we check \eqref{eq-energy-3}. Without loss of generality, we can assume that $0 \in \Omega$.
Let us fix any ball $B \subset \Omega$ containing the origin and take a nontrivial function $f \in C_c^{\infty}(\R^n)$.
If we write $f_{\delta} = \delta^{-nq_0/(q_0+1)} f(\delta^{-1} \cdot)$, then we have
\begin{align*}
\liminf_{\ep \to 0} \Theta_{p,q_{\ep}}(\Omega)
\ge \liminf_{\ep \to 0} \frac{\|(-\Delta)^{-s} (\chi_B\, f_{\delta})\|_{L^{p+1}(\Omega)}} {\|\chi_B\, f_{\delta}\|_{L^{\frac{q_{\ep}+1}{q_{\ep}}}(\Omega)}}
= \frac{\|(-\Delta)^{-s} (\chi_B\, f_{\delta})\|_{L^{p+1}(\Omega)}} {\|\chi_B\, f_{\delta}\|_{L^{\frac{q_0+1}{q_0}}(\Omega)}}.
\end{align*}
It holds from \eqref{eq-green-decom} that
\begin{multline*}
\|(-\Delta)^{-s} (\chi_B\, f_{\delta})\|_{L^{p+1}(\Omega)}
\\ \le g_{n,s} \left\||x|^{-(n-2s)} * f \right\|_{L^{p+1}(\R^n)}
+ C\(\delta^{(n-2s)p-2s} \|f\|_{L^{\infty}(\Omega)}^{p+1} \int_{\Omega} \left| \int_{\text{supp}f} H(x,\delta y) dy \right|^{p+1} dx\)^{1/(p+1)}
\end{multline*}
for small $\delta > 0$, where the second term of the right-hand side tends to 0 as $\delta \to 0$ since $p > 2s/(n-2s)$.
Therefore, by taking $\delta \to 0$, we deduce
\[\liminf_{\ep \to 0} \Theta_{p,q_{\ep}}(\Omega)
\ge \frac{g_{n,s} \left\||x|^{-(n-2s)} * f \right\|_{L^{p+1}(\R^n)}} {\|f\|_{L^{\frac{q_0+1}{q_0}}(\R^n)}}.\]
Since $f$ is arbitrary, we see from \eqref{eq-sob} that the first inequality of \eqref{eq-energy-3} is true.

To derive the other inequality, we first observe with H\"older's inequality that for any nonzero function $f \in L^{(q_{\ep}+1)/q_{\ep}}(\Omega)$,
\[\|f\|_{L^{\frac{q_0+1}{q_0}}(\Omega)} \le \|f\|_{L^{\frac{q_{\ep}+1}{q_{\ep}}}(\Omega)} |\Omega|^{\frac{1}{q_{\ep} + 1} - \frac{1}{q_0 + 1}}.\]
As a result, if we write $\ep = 1/(q_{\ep} + 1) - 1/(q_0 + 1)$
and regard $f$ as a function in $\R^n$ through the null-extension over $\R^n \setminus \overline{\Omega}$, it follows that
\[\frac{\|(-\Delta)^{-s} f\|_{L^{p+1}(\Omega)}}{\|f\|_{L^{\frac{q_{\ep}+1}{q_{\ep}}}(\Omega)}}
\le \frac{\|(-\Delta)^{-s} |f|\|_{L^{p+1}(\Omega)}} {\|f\|_{L^{\frac{q_0+1}{q_0}}(\Omega)}} |\Omega|^{\ep}
\le \frac{g_{n,s} \||x|^{-(n-2s)} * |f|\|_{L^{p+1}(\R^n)}} {\|f\|_{L^{\frac{q_0+1}{q_0}}(\R^n)}} |\Omega|^{\ep} \le S_{q_0,p}^{-1} |\Omega|^{\ep}\]
where we used \eqref{eq-G} in the first and second inequalities. This gives the second inequality of \eqref{eq-energy-3}.
\end{proof}
\begin{lem}\label{lem-min}
Given any subcritical pair $(p,q)$, assume that $w \in L^{\infty}(\Omega)$ is a positive solution of \eqref{eq-w-1} which attains \eqref{eq-Theta}.
Then $(u, v) = (w^{1/q}, (-\Delta)^{-s} w)$ is a solution to \eqref{eq-main}.
\end{lem}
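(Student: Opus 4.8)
The plan is to read $u$ and $v$ directly off the integral equation \eqref{eq-w-1} and then upgrade the formal identities $(-\Delta)^s u = v^p$ and $(-\Delta)^s v = u^q$ to genuine ones, using the solvability-and-regularity theory for $(-\Delta)^{-s}$ recorded in Subsection \ref{subsec_green} (in particular the discussion around \eqref{eq-2-29}).

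First I would set $v := (-\Delta)^{-s} w = \int_{\Omega} G(\cdot,y) w(y)\, dy$. Since $w \in L^{\infty}(\Omega)$ is positive and $G > 0$, the bound \eqref{eq-G} gives
\[0 < v(x) \le g_{n,s} \int_{\Omega} \frac{w(y)}{|x-y|^{n-2s}}\, dy \le C \|w\|_{L^{\infty}(\Omega)} \quad \text{for } x \in \Omega,\]
because $\int_{\Omega} |x-y|^{-(n-2s)}\, dy$ is uniformly bounded (here $n-2s < n$). Hence $v \in L^{\infty}(\Omega)$ and $v > 0$ in $\Omega$. Moreover, by the computation following \eqref{eq-2-29}, applying the Caffarelli--Silvestre type extension to the bounded datum $w$ shows that $v \in \mch^{2s}(\Omega)$, $(-\Delta)^s v = w$ in $\Omega$, and $v = 0$ in $\R^n \setminus \Omega$.

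Next, since $v \in L^{\infty}(\Omega)$ and $p > 0$, we have $v^p \in L^{\infty}(\Omega)$, and \eqref{eq-w-1} rewrites as $u = w^{1/q} = (-\Delta)^{-s}(v^p)$. Repeating the previous step with $v^p$ in place of $w$ yields $u \in \mch^{2s}(\Omega)$, $u > 0$ in $\Omega$, $u = 0$ in $\R^n \setminus \Omega$, and $(-\Delta)^s u = v^p$ in $\Omega$. Finally $u^q = w$, so $(-\Delta)^s v = w = u^q$ in $\Omega$; combined with the positivity and the vanishing outside $\Omega$, this is precisely the system \eqref{eq-main}, and $u, v \in \mch^{2s}(\Omega) \subset \mch^s(\Omega)$ so that $(u,v)$ lies in the natural energy space and is a critical point of $E_{p,q}$.

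The only delicate points, and thus where care is needed, are: (i) the $L^{\infty}$ bound for $v$ and then for $u$, which is what legitimizes the inversion $(-\Delta)^{-s}$ and the identity $(-\Delta)^s \circ (-\Delta)^{-s} = \textnormal{id}$ on these data via the extension theory; and (ii) the exterior condition $u = v = 0$ in $\R^n \setminus \Omega$, which is built into the definition \eqref{eq-rep} of $(-\Delta)^{-s}$ through Green's function $G$. Everything else is a bookkeeping consequence of \eqref{eq-w-1}; the stronger assertion that this $(u,v)$ is in fact a \emph{minimal} energy solution, together with the link between $\Theta_{p,q}(\Omega)$ and the minimal energy value, is taken up afterwards.
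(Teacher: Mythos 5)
Your proposal is correct and follows essentially the same route as the paper: both read $u=(-\Delta)^{-s}v^p$ and $v=(-\Delta)^{-s}u^q$ off the integral equation, use $w\in L^{\infty}(\Omega)$ together with the Green's function bound \eqref{eq-G} and the extension theory around \eqref{eq-2-29} to justify applying $(-\Delta)^s$ to both sides, and obtain $u=v=0$ in $\R^n\setminus\Omega$ from the vanishing of $G_{\mcc}$ on $\pa_L\mcc$. The only cosmetic difference is that the paper additionally invokes $C^{\alpha}(\overline{\Omega})$ regularity of $u,v$ to give the boundary values a classical meaning, but this does not change the argument.
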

\begin{proof}
Clearly $u$ belongs to $L^{\infty}(\Omega)$, is positive and solves \eqref{eq-w-0}. In particular, $(u,v)$ satisfies
\begin{equation}\label{eq-3-70}
\begin{cases}
u = (-\Delta)^{-s} v^p &\text{in } \Omega,
\\
v = (-\Delta)^{-s} u^q &\text{in } \Omega.
\end{cases}
\end{equation}
Then we observe from \eqref{eq-rep}, \eqref{eq-G} and the discussion in Subsection \ref{subsec_green}
that $v > 0$ in $\Omega$, $v \in L^{\infty}(\Omega) \cap \mch^{2s}(\Omega)$ and $u \in \mch^{2s}(\Omega)$.
Therefore we can apply $(-\Delta)^s$ in each side of \eqref{eq-3-70}, yielding that $(u, v)$  satisfies the first two equalities in \eqref{eq-main}.
Moreover, thanks to \cite[Lemma 2.10]{CDDS} or \cite[Theorem 1.5]{CSt}, we have $u, v \in C^{\alpha}(\overline{\Omega})$ for some $\alpha \in (0,2)$ depending only on $n$ and $s$.
Hence the values of $u$ and $v$ on the boundary $\pa \Omega$ have an appropriate meaning.
Since $G_{\mathcal{C}}(z,y)=0$ for all $z \in \pa_L \mcc$ and $y \in \Omega$ (see \eqref{eq-green}),
we see from Green's representation \eqref{eq-2-29} that $u = v = 0$ in $\R^n \setminus \Omega$.
The proof is completed.
\end{proof}
\noindent Compare our proof with Hang-Yang \cite{HY} where the authors introduced an equivalent maximizing problem involving an integral operator to solve a fourth-order elliptic differential equation.
However, even though the basic idea is similar to ours, the motivation is completely different.
In our situation, the technique was introduced to provide a suitable function space to work with and control the boundary behavior of solutions,
but the authors in \cite{HY} utilized it to guarantee positivity of solutions.

\medskip
The next lemma gives an account that the solution $(u,v)$ to problem \eqref{eq-main} which we obtained in Lemma \ref{lem-min} has indeed the minimal energy among solutions to \eqref{eq-main}.
\begin{lem}\label{lem-a-7}
Assume that $(u,v) \in (\mch^{2s}(\Omega))^2$ is a solution to \eqref{eq-main} and let $w = u^q$. Then we have
\begin{equation}\label{eq-a-7}
E_{p,q}(u,v) = \(1 - \frac{1}{p+1}- \frac{1}{q+1}\) \left[ \frac{\|w\|_{L^{\frac{q+1}{q}}(\Omega)}}{\|(-\Delta)^{-s} w\|_{L^{p+1}(\Omega)}}\right]^{\frac{(p+1)(q+1)}{pq-1}}.
\end{equation}
In particular, if $w$ is a maximizer of \eqref{eq-Theta}, then $(u, v) = (w^{1/q}, (-\Delta)^{-s} w)$ is a minimal solution to \eqref{eq-main}.
\end{lem}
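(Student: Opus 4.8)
The plan is to prove \eqref{eq-a-7} by a Nehari-type computation in which the two equations of \eqref{eq-main} are used to test each other, and then to read off the minimality claim from the variational characterization \eqref{eq-Theta} of $\Theta_{p,q}(\Omega)$. For the first step, recall that $(u,v)\in(\mch^{2s}(\Omega))^2\subset(\mch^s(\Omega))^2$, and that any $\mch^{2s}$-solution of \eqref{eq-main} lies in $L^{\infty}(\Omega)$ by the same bootstrap used in the proof of Lemma \ref{lem-min}; hence all the integrals below are finite and the spectral identity $\int_\Omega(-\Delta)^s\phi\cdot\psi\,dx=\int_\Omega(-\Delta)^{\frac{s}{2}}\phi\cdot(-\Delta)^{\frac{s}{2}}\psi\,dx$ is legitimate. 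Testing $(-\Delta)^su=v^p$ against $v$ and $(-\Delta)^sv=u^q$ against $u$ gives
\[\int_\Omega(-\Delta)^{\frac{s}{2}}u\cdot(-\Delta)^{\frac{s}{2}}v\,dx=\int_\Omega v^{p+1}\,dx=\int_\Omega u^{q+1}\,dx=:A,\]
and substituting this into \eqref{eq-func} yields at once $E_{p,q}(u,v)=\bigl(1-\tfrac{1}{p+1}-\tfrac{1}{q+1}\bigr)A$.

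Next I would rewrite $A$ through $w=u^q$. Since $(-\Delta)^sv=u^q=w$ in $\Omega$, $v=0$ in $\R^n\setminus\Omega$ and $v\in\mch^{2s}(\Omega)$, uniqueness for the Dirichlet problem (equivalently, the Green representation \eqref{eq-rep}) gives $v=(-\Delta)^{-s}w$. Therefore $\|(-\Delta)^{-s}w\|_{L^{p+1}(\Omega)}=\|v\|_{L^{p+1}(\Omega)}=A^{1/(p+1)}$ and $\|w\|_{L^{(q+1)/q}(\Omega)}=\bigl(\int_\Omega u^{q+1}\bigr)^{q/(q+1)}=A^{q/(q+1)}$. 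Because $\tfrac{q}{q+1}-\tfrac{1}{p+1}=\tfrac{pq-1}{(p+1)(q+1)}$, the ratio $\|w\|_{L^{(q+1)/q}}/\|(-\Delta)^{-s}w\|_{L^{p+1}}$ equals $A^{(pq-1)/((p+1)(q+1))}$; raising this to the power $\tfrac{(p+1)(q+1)}{pq-1}$ recovers $A$, and plugging the result into the formula from the previous paragraph proves \eqref{eq-a-7}.

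For the last assertion, let $w$ be a maximizer of \eqref{eq-Theta}, so that $(u,v)=(w^{1/q},(-\Delta)^{-s}w)$ solves \eqref{eq-main} by Lemma \ref{lem-min} and the bracket in \eqref{eq-a-7} equals $\Theta_{p,q}(\Omega)^{-1}$ for this pair. Given any solution $(u',v')$, set $w'=(u')^q$, a nonzero element of $L^{\infty}(\Omega)$ and hence admissible in \eqref{eq-Theta}, so that $\|w'\|_{L^{(q+1)/q}}/\|(-\Delta)^{-s}w'\|_{L^{p+1}}\ge\Theta_{p,q}(\Omega)^{-1}$, with equality iff $w'$ is a maximizer. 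The key algebraic observation is that $\bigl(1-\tfrac{1}{p+1}-\tfrac{1}{q+1}\bigr)\cdot\tfrac{(p+1)(q+1)}{pq-1}=1$, so the function $t\mapsto\bigl(1-\tfrac{1}{p+1}-\tfrac{1}{q+1}\bigr)t^{(p+1)(q+1)/(pq-1)}$ has derivative $t^{(p+1)(q+1)/(pq-1)-1}>0$ and is therefore strictly increasing on $(0,\infty)$, irrespective of the sign of $pq-1$. In view of \eqref{eq-a-7}, $E_{p,q}(u',v')$ is thus minimized over all solutions precisely when the bracket attains its minimal value $\Theta_{p,q}(\Omega)^{-1}$, i.e. when $w'$ maximizes \eqref{eq-Theta}; in particular $(w^{1/q},(-\Delta)^{-s}w)$ is a minimal energy solution.

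The computations are routine; the only point requiring care is the legitimacy of the testing identity in the first paragraph, namely that $v$ and $u$ are admissible test functions for the weak formulations of $(-\Delta)^su=v^p$ and $(-\Delta)^sv=u^q$, and that $\int_\Omega(-\Delta)^su\cdot v\,dx=\int_\Omega(-\Delta)^{\frac{s}{2}}u\cdot(-\Delta)^{\frac{s}{2}}v\,dx$. This rests on the $\mch^{2s}$-regularity of $(u,v)$ together with the $L^{\infty}$-bound obtained by bootstrapping, so I would be careful to pin down which function space each quantity lives in before manipulating the integrals, but I do not anticipate a genuine obstacle there.
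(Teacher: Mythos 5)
Your proof is correct and follows essentially the same route as the paper's: test the two equations against each other to get $\int_\Omega v^{p+1}=\int_\Omega u^{q+1}$, substitute into the energy, and use $v=(-\Delta)^{-s}w$ to convert $\int_\Omega w^{(q+1)/q}$ into the stated ratio via the exponent identity $\tfrac{q}{q+1}-\tfrac{1}{p+1}=\tfrac{pq-1}{(p+1)(q+1)}$. Your explicit monotonicity argument for the final "in particular" clause (using $(1-\tfrac{1}{p+1}-\tfrac{1}{q+1})\cdot\tfrac{(p+1)(q+1)}{pq-1}=1$) is a detail the paper leaves implicit, but the substance is identical.
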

\begin{proof}
Using the self-adjoint property of the operator $(-\Delta)^s$, we get
\begin{equation}\label{eq-a-75}
\int_{\Omega}v^{p+1}(x) dx = \int_{\Omega} (-\Delta)^s u (x) \, v (x) dx
= \int_{\Omega}  u (x) \, (-\Delta)^s v (x) dx = \int_{\Omega} u^{q+1} (x) dx.
\end{equation}
Thus
\begin{equation}\label{eq-a-80}
E_{p,q}(u,v) = \(1- \frac{1}{p+1}-\frac{1}{q+1}\) \int_{\Omega}w^{\frac{q+1}{q}} dx.
\end{equation}
Since $(-\Delta)^{-s} w = v$ in $\Omega$, we also have
\begin{equation}\label{eq-a-74}
\|(-\Delta)^{-s} w\|_{L^{p+1}(\Omega)}^{p+1} = \|w\|_{L^{\frac{q+1}{q}}(\Omega)}^{\frac{q+1}{q}}.
\end{equation}
Combining \eqref{eq-a-80} and \eqref{eq-a-74} gives us the desired result \eqref{eq-a-7}.
\end{proof}

We are ready to complete the proof of Theorem \ref{thm-0}.
\begin{proof}[Proof of Theorem \ref{thm-0}]
We infer from Lemmas \ref{lem-min} and \ref{lem-a-7} that a minimal energy solution to \eqref{eq-main} exists for any subcritical pair $(p,q)$.

Observe that if $(p, q_0)$ is a critical pair, then so is $(q_0, p)$.
Also the Hardy-Littlewood-Sobolev inequality states that a map $\mcl(f) = g_{n,s} |x|^{-(n-2s)} * f$ is the self-adjoint in the sense that
\[\int_{\mathbb{R}^n} \mcl(f)  (x) h(x) dx= \int_{\mathbb{R}^n} f(x)\mcl(h)  (x)  dx\]
for all $f, h \in C_c^{\infty}(\mathbb{R}^n)$. Therefore, using the duality formulation
\begin{align*}
\|\mcl\|_{L^{\frac{p+1}{p}}(\R^n) \to L^{q_0+1}(\R^n)}& := \sup_{ \|f\|_{L^{(p+1)/p}(\R^n)} \le 1} \|\mcl f\|_{L^{q_0 +1}(\R^n)}
\\
&= \sup_{ \|f\|_{L^{(p+1)/p}(\R^n)} \le 1} \( \sup_{ \|h\|_{L^{(q_0+1)/q_0}(\mathbb{R}^n)}\leq 1}\int_{\mathbb{R}^n} \mcl(f) (x) h(x) dx \)
\end{align*}
we find $\|\mcl\|_{L^{\frac{p+1}{p}}(\R^n) \to L^{q_0+1}(\R^n)} = \|\mcl\|_{L^{\frac{q_0+1}{q_0}}(\R^n) \to L^{p+1}(\R^n)}$. Therefore we have $S_{q_0, p} = S_{p,q_0}$.

For each $\ep > 0$ small enough, let $(u_{\ep}, v_{\ep})$ be a minimal energy solution to \eqref{eq-main} with $q = q_{\ep}$, and $w_{\ep} = u_{\ep}^{q_{\ep}}$.
By the definition of $S_{p,q_{\ep}}(\Omega)$, \eqref{eq-a-74} and \eqref{eq-energy-3}, it turns out that
\[\lim_{\ep \to 0} S_{p,q_{\ep}}(\Omega) = \lim_{\ep \to 0} \frac{\|(-\Delta)^{-s} w_{\ep}\|_{L^{p+1}(\Omega)}^p} {\|w_{\ep}\|_{L^{\frac{q_{\ep}+1}{q_{\ep}}}(\Omega)}^{1 \over q_{\ep}}}
= \lim_{\ep \to 0} \frac{\|w_{\ep}\|_{L^{\frac{q_{\ep}+1}{q_{\ep}}}(\Omega)}} {\|(-\Delta)^{-s} w_{\ep}\|_{L^{p+1}(\Omega)}} = S_{q_0, p} = S_{p,q_0},\]
which is \eqref{eq-energy}. Moreover, we infer from \eqref{eq-pq-e}, \eqref{eq-energy}, \eqref{eq-a-80} and \eqref{eq-a-74} that
\begin{equation}\label{eq-a-81}
\lim_{\ep \to 0} E_{p,q_{\ep}}(u_{\ep}, v_{\ep}) = \lim_{\ep \to 0} \(\frac{2s}{n} - \ep\) \int_{\Omega} w_{\ep}^{\frac{q_{\ep}+1}{q_{\ep}}} dx = {2s \over n} S_{p,q_0}^{n \over 2s}.
\end{equation}
Consequently, \eqref{eq-energy-2} has the validity.
\end{proof}

\section{Proof of Theorem \ref{thm-1-2}}\label{sec-4}
In this section, we derive the fact that minimal energy solutions should blow up as $\ep \to 0$ and investigate the limit of their normalizations.
As before, $\{(u_{\ep}, v_{\ep})\}_{\ep > 0}$ denotes a family of solutions to \eqref{eq-main} with $q = q_{\ep}$ satisfying \eqref{eq-energy} and $w_{\ep} = u_{\ep}^{q_{\ep}}$.

\begin{lem}\label{lem-lam}
Let $\lambda_{\ep} > 0$ is the number introduced in \eqref{eq-max}. Then it is true that $\lambda_{\ep} \to \infty$ as $\ep \to 0$.
\end{lem}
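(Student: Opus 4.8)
The plan is to argue by contradiction: if $\lambda_\ep$ stays bounded along some sequence $\ep_k\to 0$, then $u_{\ep_k}$ and $v_{\ep_k}$ are uniformly bounded in $C^\alpha(\overline\Omega)$, so their limit solves the \emph{critical} system on $\Omega$ and its associated density $w_0=u_0^{q_0}$ realizes the extremal ratio in \eqref{eq-Theta} with constant $S_{p,q_0}^{-1}$; but this is impossible because Green's function $G$ is \emph{strictly} dominated by the whole-space kernel $g_{n,s}|x-y|^{-(n-2s)}$, so the extremal ratio over functions supported in $\Omega$ is strictly less than $S_{p,q_0}^{-1}$.

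First I would record that $\alpha_\ep$ stays in a compact subset of $(0,\infty)$: writing $a=1/(p+1)$, $b=1/(q_0+1)$, the hyperbola $a+b=(n-2s)/n<1$ gives $pq_0-1=(1-a-b)/(ab)>0$, hence $\alpha_\ep=2s(p+1)/(pq_\ep-1)\to\alpha_0:=2s(p+1)/(pq_0-1)\in(0,\infty)$ as $\ep\to 0$. Since $\lambda_\ep=\|u_\ep\|_{L^\infty(\Omega)}^{1/\alpha_\ep}$ by \eqref{eq-max}, if $\lambda_\ep\not\to\infty$ there is a sequence $\ep_k\to 0$ along which $\lambda_{\ep_k}$, and therefore $\|u_{\ep_k}\|_{L^\infty(\Omega)}=\lambda_{\ep_k}^{\alpha_{\ep_k}}$, is bounded by a constant $C$. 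Then $v_{\ep_k}=(-\Delta)^{-s}(u_{\ep_k}^{q_{\ep_k}})$ has uniformly bounded $L^\infty$ norm, and so does $u_{\ep_k}=(-\Delta)^{-s}(v_{\ep_k}^p)$, so the regularity results already used in Lemma \ref{lem-min} (\cite[Lemma 2.10]{CDDS}, \cite[Theorem 1.5]{CSt}) give uniform $C^\alpha(\overline\Omega)$ bounds. By Arzel\`a--Ascoli, along a further subsequence $u_{\ep_k}\to u_0$ and $v_{\ep_k}\to v_0$ in $C(\overline\Omega)$ with $u_0,v_0\ge 0$ vanishing on $\pa\Omega$, and $w_{\ep_k}=u_{\ep_k}^{q_{\ep_k}}\to w_0:=u_0^{q_0}$ in $C(\overline\Omega)$ (using $q_{\ep_k}\to q_0$ and the uniform bound).

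The next step is to see that $w_0$ is nontrivial and extremal. From \eqref{eq-a-80} and \eqref{eq-energy-2}, $\int_\Omega u_{\ep_k}^{q_{\ep_k}+1}\,dx=E_{p,q_{\ep_k}}(u_{\ep_k},v_{\ep_k})/(\tfrac{2s}{n}-\ep_k)\to S_{p,q_0}^{n/2s}>0$, and by uniform convergence the left side tends to $\int_\Omega u_0^{q_0+1}\,dx$, so $w_0\neq 0$. By Lemma \ref{lem-a-7}, minimality of the energy is equivalent to extremality of $\|(-\Delta)^{-s}w\|_{L^{p+1}}/\|w\|_{L^{(q+1)/q}}$, so each $w_{\ep_k}$ maximizes \eqref{eq-Theta}, i.e.
\[\frac{\|(-\Delta)^{-s}w_{\ep_k}\|_{L^{p+1}(\Omega)}}{\|w_{\ep_k}\|_{L^{(q_{\ep_k}+1)/q_{\ep_k}}(\Omega)}}=\Theta_{p,q_{\ep_k}}(\Omega)\longrightarrow S_{q_0,p}^{-1}=S_{p,q_0}^{-1}\]
by \eqref{eq-energy-3} and the identity $S_{q_0,p}=S_{p,q_0}$ from the proof of Theorem \ref{thm-0}. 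Passing to the limit — using uniform convergence of $w_{\ep_k}$, continuity of $(-\Delta)^{-s}:L^{(q_0+1)/q_0}(\Omega)\to L^{p+1}(\Omega)$ (the critical case of \eqref{eq-hl}), the convergence of the $L^{(q_{\ep_k}+1)/q_{\ep_k}}$-norm of $w_{\ep_k}$ to the $L^{(q_0+1)/q_0}$-norm of $w_0$ since $w_0\neq 0$ — yields $\|(-\Delta)^{-s}w_0\|_{L^{p+1}(\Omega)}=S_{p,q_0}^{-1}\|w_0\|_{L^{(q_0+1)/q_0}(\Omega)}$.

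Finally I would reach the contradiction. By \eqref{eq-green-decom} and \eqref{eq-G}, $G(x,y)=g_{n,s}|x-y|^{-(n-2s)}-H(x,y)$ with $H>0$ on $\Omega\times\Omega$, so from \eqref{eq-rep} and $w_0\ge 0$, $w_0\not\equiv 0$ we get $(-\Delta)^{-s}w_0(x)<g_{n,s}\,(|\cdot|^{-(n-2s)}*w_0)(x)$ for a.e.\ $x\in\Omega$; extending $w_0$ by zero and invoking the Hardy--Littlewood--Sobolev inequality \eqref{eq-hls} together with $S_{q_0,p}=S_{p,q_0}$,
\[\|(-\Delta)^{-s}w_0\|_{L^{p+1}(\Omega)}<g_{n,s}\left\||x|^{-(n-2s)}*w_0\right\|_{L^{p+1}(\R^n)}\le S_{p,q_0}^{-1}\|w_0\|_{L^{(q_0+1)/q_0}(\R^n)}=S_{p,q_0}^{-1}\|w_0\|_{L^{(q_0+1)/q_0}(\Omega)},\]
contradicting the equality just obtained. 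Hence $\lambda_\ep\to\infty$. I expect the delicate point to be the limiting step of the previous paragraph — guaranteeing that $w_0$ does not degenerate and that the maximizing property for $\Theta_{p,q_{\ep_k}}(\Omega)$ survives the passage to the limit across varying exponents — whereas the concluding contradiction is a soft consequence of the strict inequality $G<G_{\R^{n+1}_+}$.
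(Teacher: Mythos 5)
Your proposal is correct and uses essentially the same argument as the paper: assume $\lambda_\ep$ stays bounded, extract a nontrivial limit by compactness, observe that it would realize the critical extremal ratio on $\Omega$, and contradict the definition of $S_{p,q_0}$ via the strict inequality $G(x,y) < g_{n,s}|x-y|^{-(n-2s)}$ from \eqref{eq-G}. The only cosmetic differences are that the paper passes to the limit in $v_\ep$ using the compact embedding of Lemma \ref{lem-cpt} and works with the quotient in \eqref{eq-energy}, whereas you work with $w_0=u_0^{q_0}$, the quotient $\Theta_{p,q_\ep}(\Omega)$, and $C^\alpha$ bounds with Arzel\`a--Ascoli.
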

\begin{proof}
Assume that $\{u_{\ep}\}_{\ep > 0}$ is uniformly bounded in $\Omega$.
Since $v_{\ep} = (-\Delta)^{-s} u_{\ep}^{q_{\ep}}$, it follows from Lemma \ref{lem-cpt} that there exists a function $v_0 \in L^r(\Omega)$ such that $v_{\ep} \to v_0$ in $L^r(\Omega)$ for any $r > 1$.
In light of \eqref{eq-energy}, it holds that $v_0 \ne 0$ in $\Omega$.
We regard $v_0$ as a function in $\R^n$ by defining $0$ outside $\Omega$. Then it holds that
\[S_{p,q_0}^{-1} \|v_0^p\|_{L^{p+1 \over p}(\R^n)} = S_{p,q_0}^{-1} \|v_0^p\|_{L^{p+1 \over p}(\Omega)}
= \|(-\Delta)^{-s} v_0^p\|_{L^{q_0+1}(\Omega)} < g_{n,s} \left\||x|^{-(n-2s)} * v_0^p \right\|_{L^{q_0+1}(\R^n)}\]
where \eqref{eq-G} was used in the last inequality.
However it violates the definition of $S_{p,q_0}$ in \eqref{eq-sob}.
Hence it should hold that $\lambda_{\ep} \to \infty$ as $\ep \to 0$.
\end{proof}

Define two parameters
\begin{equation}\label{eq-ab}
\alpha_{\ep} = \frac{2s(p+1)}{pq_{\ep}-1} \quad \text{and} \quad \beta_{\ep} = \frac{2s(q_{\ep}+1)}{pq_{\ep}-1},
\end{equation}
and normalize the solutions $(u_{\ep}, v_{\ep})$ to \eqref{eq-main} as
\begin{equation}\label{eq-tuv}
\tu_{\ep} = \lambda_{\ep}^{-\alpha_{\ep}} u_{\ep} (\lambda_{\ep}^{-1} \cdot + x_{\ep}), \quad
\tv_{\ep} = \lambda_{\ep}^{-\beta_{\ep}} v_{\ep} (\lambda_{\ep}^{-1} \cdot + x_{\ep})
\quad \text{in } \Omega_{\ep} = \lambda_{\ep}(\Omega-x_{\ep}).
\end{equation}
They satisfy
\begin{equation}\label{eq-b-20}\begin{cases}
(-\Delta)^s \tu_{\ep} = \tv_{\ep}^{p} &\text{in } \Omega_{\ep},
\\
(-\Delta)^s \tv_{\ep} = \tu_{\ep}^{q_{\ep}} &\text{in } \Omega_{\ep},
\\
\tu_{\ep} = \tv_{\ep} = 0 &\text{in } \R^n \setminus \Omega_{\ep}
\end{cases}
\end{equation}
and
\[\max_{x \in \Omega_{\ep}} \tu_{\ep}(x) = 1 = \tu_{\ep}(0).\]
Also we set $\tw_{\ep} = \tu_{\ep}^{q_{\ep}}$ in $\Omega_{\ep}$. Then it agrees with the definition of $\tw_{\ep}$ given in \eqref{eq-tw}.

If $\lambda_{\ep} \text{dist}(x_{\ep}, \pa \Omega) \to c \in [0, \infty)$ as $\ep \to 0$,
then one may assume that $\Omega_{\ep}$ converges to the upper half-space $\R^n_+$ (rotating the domain $\Omega$ if necessary).
Otherwise, i.e., if $\lim_{\ep \to 0} \lambda_{\ep} \text{dist}(x_{\ep}, \pa \Omega) = \infty$, then $\Omega_{\ep}$ converges to $\R^n$.
\begin{lem}\label{lem-a-4}
Assume that $D$ is the limit set of $\Omega_{\ep}$ as $\ep \to 0$ so that it is either $\R^n_+$ or $\R^n$.
Moreover, let $(-\Delta)^{-s}$ be the operator defined as
\begin{equation}\label{eq-rep-2}
(-\Delta)^{-s} f(x) = g_{n,s} \int_{\R^n} \frac{f(y)}{|x-y|^{n-2s}} dy \quad \text{for } x \in D
\end{equation}
where $f$ is understood to be 0 in $\R^n_- := \R^{n-1} \times (-\infty, 0)$ when $D = \R^n_+$.
Then, passing to a subsequence, the rescaled function $\tw_{\ep}$ converges to a nonzero function $W$
weakly in $L^{\zeta_1}(D)$ and strongly in $C^{\alpha}(D)$ for any $\zeta_1 > (q_0+1)/q_0$ and some $\alpha \in (0,2)$.
Also it holds that
\begin{equation}\label{eq-a-13}
\int_D W^{1 \over q_0}(x) \phi(x) dx \le \int_D (-\Delta)^{-s} \phi(x) ((-\Delta)^{-s} W)^p(x) dx
\end{equation}
for all nonnegative functions $\phi \in C_c^{\infty}(D)$, and
\begin{equation}\label{eq-W}
\|W\|_{L^{\zeta_1}(D)} + \|(-\Delta)^{-s} W\|_{L^{\zeta_2}(D)} < \infty
\end{equation}
for any $\zeta_1 \ge (q_0+1)/q_0$ and $1/\zeta_2 = 1/\zeta_1 + 2s/n$.
\end{lem}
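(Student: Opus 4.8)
\emph{Proof plan.} Everything rests on three $\ep$-uniform a priori bounds for the rescaled functions in \eqref{eq-tuv}--\eqref{eq-tw}. First I would record the two elementary ones. Since $\max_{\Omega_\ep}\tu_\ep=1$, we have $0\le\tw_\ep=\tu_\ep^{q_\ep}\le 1$ on $\Omega_\ep$, which is the $L^\infty$ bound. For the mass bound, a change of variables in \eqref{eq-tw} gives
\[
\|\tw_\ep\|_{L^{(q_\ep+1)/q_\ep}(\Omega_\ep)}^{(q_\ep+1)/q_\ep}
=\lambda_\ep^{\,n-\alpha_\ep(q_\ep+1)}\int_\Omega w_\ep^{(q_\ep+1)/q_\ep}\,dx,
\]
and from \eqref{eq-pq-e} and \eqref{eq-ab} one finds $n-\alpha_\ep(q_\ep+1)=-n^2\ep/(2s-n\ep)<0$ for $\ep$ small. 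Since $\lambda_\ep\to\infty$ by Lemma \ref{lem-lam}, while $\int_\Omega w_\ep^{(q_\ep+1)/q_\ep}\,dx\to S_{p,q_0}^{n/2s}$ by \eqref{eq-a-81}, the left-hand side is bounded uniformly in $\ep$. Interpolating with the $L^\infty$ bound, $\tw_\ep$ is bounded in $L^{\zeta_1}(\Omega_\ep)$ for every $\zeta_1\ge(q_\ep+1)/q_\ep$, hence (for $\ep$ small) for every $\zeta_1\ge(q_0+1)/q_0$; note also $(q_0+1)/q_0<n/2s$, a consequence of \eqref{eq-sob-h}.

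Next I would establish the interior H\"older bounds, working with the integral formulation. Rescaling \eqref{eq-3-70} gives $\tv_\ep=(-\Delta)^{-s}_{\Omega_\ep}\tw_\ep$ and $\tu_\ep=(-\Delta)^{-s}_{\Omega_\ep}\tv_\ep^p$, where $(-\Delta)^{-s}_{\Omega_\ep}$ is convolution against the Green's function $G_{\Omega_\ep}$ of $(-\Delta)^s$ in $\Omega_\ep$, which by the rescaling of \eqref{eq-G} satisfies $0<G_{\Omega_\ep}(x,y)<g_{n,s}|x-y|^{-(n-2s)}$. On a ball $B_R$, splitting the potential defining $\tv_\ep$ into a part over $B_{2R}$ (controlled by $\|\tw_\ep\|_{L^\infty}$ and the local integrability of $|\cdot|^{-(n-2s)}$) and a part over $\Omega_\ep\setminus B_{2R}$ (controlled by H\"older's inequality, finite because $|\cdot|^{-(n-2s)}\in L^{\zeta_1'}(\R^n\setminus B_R)$ exactly when $\zeta_1<n/2s$) yields $\|\tv_\ep\|_{L^\infty(B_R)}\le C_R$. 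Then $\tv_\ep^p\in L^{\infty}_{\mathrm{loc}}$ uniformly, and the same near/far splitting applied to $\tu_\ep=(-\Delta)^{-s}_{\Omega_\ep}\tv_\ep^p$ — the far part being finite because $p>2s/(n-2s)$, i.e. $\zeta_2/p<n/2s$ where $\tv_\ep^p\in L^{\zeta_2/p}$ with $\zeta_2$ close to $p+1$ (cf. \eqref{eq-hl}) — gives $\|\tu_\ep\|_{C^\alpha(B_R)}\le C_R$ for some $\alpha\in(0,2)$ independent of $\ep$, and composing with $t\mapsto t^{q_\ep}$ gives the same for $\tw_\ep$. By reflexivity and a diagonal argument I would then extract a subsequence along which $\tw_\ep\rightharpoonup W$ weakly in $L^{\zeta_1}$ for every $\zeta_1>(q_0+1)/q_0$, and by the Arzel\`a--Ascoli theorem (plus a further diagonalization) $\tw_\ep\to W$ in $C^\alpha_{\mathrm{loc}}(D)$, together with $\tu_\ep\to W^{1/q_0}$. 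The estimates \eqref{eq-W} follow: $\|W\|_{L^{\zeta_1}(D)}<\infty$ by weak lower semicontinuity (and Fatou at the endpoint exponent), and $\|(-\Delta)^{-s}W\|_{L^{\zeta_2}(D)}<\infty$ by the Hardy-Littlewood-Sobolev inequality \eqref{eq-hls}.

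For \eqref{eq-a-13} I would fix $0\le\phi\in C_c^\infty(D)$, so that $\text{supp}\,\phi\subset\Omega_\ep$ for $\ep$ small, and use $\tw_\ep^{1/q_\ep}=\tu_\ep=(-\Delta)^{-s}_{\Omega_\ep}\big((-\Delta)^{-s}_{\Omega_\ep}\tw_\ep\big)^p$, the symmetry of $G_{\Omega_\ep}$, and the two pointwise inequalities $(-\Delta)^{-s}_{\Omega_\ep}\tw_\ep\le(-\Delta)^{-s}\tw_\ep$ and $(-\Delta)^{-s}_{\Omega_\ep}\phi\le(-\Delta)^{-s}\phi$ (valid as $\tw_\ep,\phi\ge 0$, with $(-\Delta)^{-s}$ the operator of \eqref{eq-rep-2}), to get
\[
\int_{\Omega_\ep}\tw_\ep^{1/q_\ep}\,\phi\,dx\le\int_{\R^n}\big((-\Delta)^{-s}\tw_\ep\big)^p\,(-\Delta)^{-s}\phi\,dx.
\]
On the left, $\tw_\ep^{1/q_\ep}=\tu_\ep\to W^{1/q_0}$ uniformly on $\text{supp}\,\phi$. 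On the right, a near/far splitting of the kernel shows $(-\Delta)^{-s}\tw_\ep\to(-\Delta)^{-s}W$ pointwise (near part by local uniform convergence, far part by weak $L^{\zeta_1}$ convergence, using $\zeta_1<n/2s$); these limits are bounded on compact sets and bounded in $L^{\zeta_2}$ globally, while $(-\Delta)^{-s}\phi$ decays like $|x|^{-(n-2s)}$ and hence lies in $L^{(\zeta_2/p)'}(\R^n)$ because $p>2s/(n-2s)$, so dominated convergence on balls plus uniform smallness of the tail yields \eqref{eq-a-13}. To see $W\not\equiv 0$, evaluate the same pointwise inequalities at $x=0$: $1=\tu_\ep(0)\le(-\Delta)^{-s}\big(((-\Delta)^{-s}\tw_\ep)^p\big)(0)$, and passing to the limit as above gives $(-\Delta)^{-s}\big(((-\Delta)^{-s}W)^p\big)(0)\ge 1$, so $W$ cannot vanish identically.

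The main obstacle, I expect, is the interior regularity step on the moving domains $\Omega_\ep$, which degenerate to $\R^n$ or $\R^n_+$: one must control the Green potentials uniformly out to infinity, and it is precisely there — through the $L^{(\zeta_2/p)'}$-integrability at infinity of $|\cdot|^{-(n-2s)}$, equivalent to $\zeta_2/p<n/2s$ — that the hypothesis $p>2s/(n-2s)$ is used in an essential way.
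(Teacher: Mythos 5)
Your proposal is correct in substance and reaches all three conclusions, but it departs from the paper's proof in two places worth recording. First, for the nontriviality of $W$: the paper invokes the boundary regularity estimate $\tu_{\ep}(x) \le C\,\mathrm{dist}(x,\pa\Omega_{\ep})^{\min\{2s,1\}}$ (from \cite[Lemma 2.11]{CDDS} or \cite[Theorem 1.3]{CSt}) together with $\tu_{\ep}(0)=1$ to conclude $\mathrm{dist}(0,\pa\Omega_{\ep})\ge 2c_0$, so that $0$ is an interior point of $D$ and the local $C^{\alpha}$ convergence forces $W(0)=1$. You instead evaluate the integral identity at the origin, dominate the domain Green's function by the free kernel, and pass to the limit to get $(-\Delta)^{-s}\bigl(((-\Delta)^{-s}W)^p\bigr)(0)\ge 1$; the tail is controlled exactly by the integrability of $|x|^{-(n-2s)(p+1)}$ at infinity, i.e.\ by $p>2s/(n-2s)$. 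This is a valid and arguably cleaner route that sidesteps boundary regularity entirely; it yields only $W\not\equiv 0$ rather than $W(0)=1$, but nothing more is needed downstream. Second, for \eqref{eq-a-13} the paper introduces the intermediate weak limit $V$ of $(-\Delta)_{\ep}^{-s}\tw_{\ep}$ and proves $V\le(-\Delta)^{-s}W$ via the adjoint identity \eqref{eq-a-40} and Fatou-type bounds on the kernels, whereas you apply the kernel comparison $G_{\ep}\le g_{n,s}|x-y|^{-(n-2s)}$ first and then pass to the limit with the free-space operator directly; both work, and your bookkeeping is more direct.

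The one step you should tighten is the uniform interior $C^{\alpha}$ bound. Your near/far splitting of the Green potentials delivers uniform \emph{local boundedness} of $\tu_{\ep}$ and $\tv_{\ep}$, but jumping from there to a uniform $C^{\alpha}(B_R)$ bound requires either (i) the H\"older continuity of Riesz potentials of bounded functions \emph{together with} uniform $C^1$ control of the regular part $H_{\Omega_{\ep}}(x,\cdot)$ of the rescaled Green's function on compact subsets (the rescaled version of \cite[Lemma 2.4]{CKL}), or (ii) the interior elliptic regularity for the extension problem, which is what the paper cites (\cite{CDDS, CaS, CSt}) applied to \eqref{eq-b-20}. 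As written, your sentence ``the same near/far splitting \dots gives $\|\tu_{\ep}\|_{C^{\alpha}(B_R)}\le C_R$'' elides this; it is standard but should be justified, since the Arzel\`a--Ascoli extraction and the identification $\tu_{\ep}\to W^{1/q_0}$ both rest on it.
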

\noindent The proof of this technical lemma is deferred to Appendix \ref{app-a}.

\begin{lem}\label{lem-a-2}
It is valid that
\begin{equation}\label{eq-a-3}
\lim_{\ep \to 0}\lambda_{\ep} \textnormal{dist}(x_{\ep}, \pa \Omega) \to \infty \quad \text{and} \quad \lim_{\ep \to 0} \lambda_{\ep}^{\ep} =1.
\end{equation}
In addition, the pointwise limit $W$ of $\tw_{\ep}$ found in the previous lemma
is a minimizer of the Hardy-Sobolev-Littlewood inequality \eqref{eq-sob} and
\begin{equation}\label{eq-tw-con}
\lim_{\ep \to 0} \int_{\Omega_{\ep}} \tw_{\ep}^{\frac{q_{\ep}+1}{q_{\ep}}}(x) dx = \int_{\R^n} W^{\frac{q_0+1}{q_0}}(x) dx.
\end{equation}
\end{lem}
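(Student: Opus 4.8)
The plan is to pin down the value of the mass $\int_{\Omega_{\ep}}\tw_{\ep}^{(q_{\ep}+1)/q_{\ep}}\,dx$ exactly by trapping it between two computable quantities, and then to read off all four assertions from the resulting chain of equalities.

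First I would record two facts. Combining \eqref{eq-a-80} with \eqref{eq-energy-2} gives $\int_{\Omega}w_{\ep}^{(q_{\ep}+1)/q_{\ep}}\,dx\to S_{p,q_0}^{n/(2s)}$ as $\ep\to0$. On the other hand, substituting $y=\lambda_{\ep}^{-1}x+x_{\ep}$ in the definition \eqref{eq-tw} of $\tw_{\ep}$, and using that $pq_{\ep}-1=\left(\frac{2s}{n}-\ep\right)(p+1)(q_{\ep}+1)$ by \eqref{eq-pq-e} so that $n-\alpha_{\ep}(q_{\ep}+1)=-n^2\ep/(2s-n\ep)$, one obtains the scaling identity
\[\int_{\Omega_{\ep}}\tw_{\ep}^{(q_{\ep}+1)/q_{\ep}}\,dx=\lambda_{\ep}^{-n^2\ep/(2s-n\ep)}\int_{\Omega}w_{\ep}^{(q_{\ep}+1)/q_{\ep}}\,dx.\]
Since $\lambda_{\ep}\to\infty$ (Lemma \ref{lem-lam}) and the exponent is negative for small $\ep$, this already yields $\limsup_{\ep\to0}\int_{\Omega_{\ep}}\tw_{\ep}^{(q_{\ep}+1)/q_{\ep}}\,dx\le S_{p,q_0}^{n/(2s)}$.

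For the matching lower bound, Fatou's lemma applied to the zero-extensions (using $\tw_{\ep}\to W$ pointwise a.e.\ from Lemma \ref{lem-a-4} and $(q_{\ep}+1)/q_{\ep}\to(q_0+1)/q_0$) gives $\int_{D}W^{(q_0+1)/q_0}\,dx\le\liminf_{\ep\to0}\int_{\Omega_{\ep}}\tw_{\ep}^{(q_{\ep}+1)/q_{\ep}}\,dx$, where $D\in\{\R^n_+,\R^n\}$ is the limit set of $\Omega_{\ep}$. The heart of the argument is the estimate $\int_{D}W^{(q_0+1)/q_0}\,dx\ge S_{p,q_0}^{n/(2s)}$. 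Writing $V:=(-\Delta)^{-s}W$ and testing \eqref{eq-a-13} against nonnegative $\phi\in C_c^{\infty}(D)$, the symmetry of the kernel in \eqref{eq-rep-2} turns \eqref{eq-a-13} into the pointwise inequality $W^{1/q_0}\le(-\Delta)^{-s}(V^p)$ a.e.; multiplying by $W$, integrating, and using the symmetry once more,
\begin{align*}
\int_{D}W^{(q_0+1)/q_0}\,dx&\le\int_{D}W\,(-\Delta)^{-s}(V^p)\,dx=\int_{D}V^{p+1}\,dx\\
&\le\int_{\R^n}\left(g_{n,s}\,|x|^{-(n-2s)}*W\right)^{p+1}\,dx,
\end{align*}
the last inequality because $V$ is the restriction to $D$ of the full-space convolution $g_{n,s}|x|^{-(n-2s)}*W$. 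As $W\in L^{(q_0+1)/q_0}(\R^n)$ by \eqref{eq-W}, and the Hardy--Littlewood--Sobolev exponent relation in \eqref{eq-hls} holds for the pair $((q_0+1)/q_0,p+1)$ because of \eqref{eq-sob-h}, the sharp inequality \eqref{eq-sob} together with $S_{q_0,p}=S_{p,q_0}$ (established in the proof of Theorem \ref{thm-0}) bounds the last integral by $S_{p,q_0}^{-(p+1)}\left(\int_{D}W^{(q_0+1)/q_0}\,dx\right)^{(p+1)q_0/(q_0+1)}$. Setting $m:=\int_{D}W^{(q_0+1)/q_0}\,dx$, which is finite by \eqref{eq-W} and positive since $W\not\equiv0$, the resulting inequality $m\le S_{p,q_0}^{-(p+1)}m^{(p+1)q_0/(q_0+1)}$ rearranges, via $(p+1)(q_0+1)/(pq_0-1)=n/(2s)$ from \eqref{eq-sob-h}, to $m\ge S_{p,q_0}^{n/(2s)}$.

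It then remains to harvest. The squeeze $S_{p,q_0}^{n/(2s)}\le m\le\liminf_{\ep}\int_{\Omega_{\ep}}\tw_{\ep}^{(q_{\ep}+1)/q_{\ep}}\,dx\le\limsup_{\ep}\int_{\Omega_{\ep}}\tw_{\ep}^{(q_{\ep}+1)/q_{\ep}}\,dx\le S_{p,q_0}^{n/(2s)}$ makes every inequality an equality, so $\int_{\Omega_{\ep}}\tw_{\ep}^{(q_{\ep}+1)/q_{\ep}}\,dx\to S_{p,q_0}^{n/(2s)}=m$. Inserting this into the scaling identity and using $\int_{\Omega}w_{\ep}^{(q_{\ep}+1)/q_{\ep}}\,dx\to S_{p,q_0}^{n/(2s)}>0$ forces $\lambda_{\ep}^{-n^2\ep/(2s-n\ep)}\to1$, hence $\ep\log\lambda_{\ep}\to0$, i.e.\ $\lambda_{\ep}^{\ep}\to1$. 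Equality in the $\le$ coming from \eqref{eq-sob} says precisely that $W$ is a minimizer of the Hardy--Littlewood--Sobolev inequality \eqref{eq-sob} (in the equivalent form with $S_{q_0,p}=S_{p,q_0}$), and \eqref{eq-tw-con} is then the restatement $\int_{\Omega_{\ep}}\tw_{\ep}^{(q_{\ep}+1)/q_{\ep}}\,dx\to m=\int_{D}W^{(q_0+1)/q_0}\,dx$ once we know $D=\R^n$. Finally, equality in $\int_{D}V^{p+1}\,dx=\int_{\R^n}\left(g_{n,s}|x|^{-(n-2s)}*W\right)^{p+1}\,dx$ forces $g_{n,s}|x|^{-(n-2s)}*W$, which is strictly positive everywhere because $W\ge0$ and $W\not\equiv0$, to vanish a.e.\ on $\R^n\setminus D$; hence $\R^n\setminus D$ is null and $D=\R^n$, which by the discussion preceding Lemma \ref{lem-a-4} is equivalent to $\lambda_{\ep}\,\textnormal{dist}(x_{\ep},\pa\Omega)\to\infty$ (and, the extracted subsequence being arbitrary, for the whole family), giving \eqref{eq-a-3}.

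The step I expect to be the main obstacle is the lower bound $m\ge S_{p,q_0}^{n/(2s)}$: the variational characterization of $\tw_{\ep}$ survives the limit only as the one-sided inequality \eqref{eq-a-13}, so the sharp constant has to be recovered by coupling it with the sharp Hardy--Littlewood--Sobolev inequality, and it is the rigidity of the latter that simultaneously excludes the half-space limit and identifies $W$ as an extremal. Verifying that the kernel manipulations (in particular the passage from \eqref{eq-a-13} to the pointwise inequality, and the applications of Tonelli) are legitimate under only the integrability recorded in \eqref{eq-W} is routine but should be carried out with care.
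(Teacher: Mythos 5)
Your proposal is correct and follows essentially the same route as the paper: the scaling identity $\int_{\Omega_\ep}\tw_\ep^{(q_\ep+1)/q_\ep} = \lambda_\ep^{-((q_\ep+1)\alpha_\ep-n)}\int_\Omega w_\ep^{(q_\ep+1)/q_\ep}$ together with \eqref{eq-a-81} for the upper bound, the limit inequality \eqref{eq-a-13} coupled with the sharp Hardy--Littlewood--Sobolev constant (and $S_{q_0,p}=S_{p,q_0}$) for the lower bound, and the resulting chain of forced equalities to obtain $\lambda_\ep^\ep\to1$, $D=\R^n$, the extremality of $W$, and \eqref{eq-tw-con}. The only cosmetic differences are that you extract a pointwise inequality from \eqref{eq-a-13} and pair it with $W$ directly (the paper instead passes to the limit along approximating test functions $\phi_k\to W$), and you track the mass $m$ rather than the paper's normalized ratio and the quantity $\mca=\liminf\lambda_\ep^{n^2\ep/(2s)}$.
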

\begin{proof}
Choose a sequence $\{\phi_k\}_{k \in \N} \subset C_c^{\infty}(D)$ such that $\phi_k \to W$ in $L^{(q_0+1)/q_0}(D)$ as $k \to \infty$.
By the Hardy-Littlewood-Sobolev inequality \eqref{eq-hls}, it follows that $(-\Delta)^{-s}\phi_k \to (-\Delta)^{-s}W$ in $L^{p+1}(D)$.
Hence we are able to take $\phi = \phi_k$ in \eqref{eq-a-13} and send $k \to \infty$ to get
\[\int_D W^{\frac{q_0+1}{q_0}}(x) dx \le \int_D ((-\Delta)^{-s} W)^{p+1}(x) dx,\]
or equivalently,
\begin{equation}\label{eq-a-17}
\frac{1}{\|W\|_{L^{\frac{q_0+1}{q_0}}(D)}^{\gamma_0}} \le \frac{g_{n,s} \||x|^{-(n-2s)} * W\|_{L^{p+1}(D)}}{\|W\|_{L^{\frac{q_0+1}{q_0}}(D)}}
\quad \text{where } \gamma_0 := \frac{pq_0-1}{q_0(p+1)} > 0.
\end{equation}

If $D = \R_+^n$, we let $W = 0$ on $\R_-^n$. Then we see from \eqref{eq-sob} that
\begin{equation}\label{eq-a-77}
\frac{g_{n,s} \||x|^{-(n-2s)} * W\|_{L^{p+1}(D)}} {\|W\|_{L^{\frac{q_0+1}{q_0}}(D)}}
\le \frac{g_{n,s} \||x|^{-(n-2s)} * W\|_{L^{p+1}(\R^n)}}{\|W\|_{L^{\frac{q_0+1}{q_0}}(\R^n)}} \le S_{q_0,p}^{-1}
\end{equation}
where the first inequality is indeed an equality if and only if $D = \R^n$.

Thanks to \eqref{eq-tw}, we have
\begin{equation}\label{eq-a-34}
\int_{\Omega} w_{\ep}^{\frac{q_{\ep}+1}{q_{\ep}}} (x) dx = \lambda_{\ep}^{(q_{\ep}+1)\alpha_{\ep}-n} \int_{\Omega_{\ep}} \tw_{\ep}^{\frac{q_{\ep}+1}{q_{\ep}}}(x) dx.
\end{equation}
Also, \eqref{eq-pq-e} and interior regularity property of \eqref{eq-b-20} (see e.g. \cite{CDDS, CaS, CSt}) tell us that
\[(q_{\ep}+1) \alpha_{\ep} - n = {n^2 \over 2s} \ep + O(\ep^2)\]
and $\tw_{\ep} \to W$ a.e. in $D$ along a subsequence, respectively.
Considering \eqref{eq-a-81}, we deduce
\begin{equation}\label{eq-a-35}
S_{q_0,p}^{n \over 2s} \ge \liminf_{\ep \to 0}  \int_{\Omega_{\ep}} \tw_{\ep}^{\frac{q_{\ep}+1}{q_{\ep}}} (x) dx
\ge \(\liminf_{\ep \to 0} \lambda_{\ep}^{{n^2\ep \over 2s}}\) \int_{D} W^{\frac{q_0+1}{q_0}}(x) dx = \mca \, \|W\|_{L^{\frac{q_0+1}{q_0}}(D)}^{\frac{q_0+1}{q_0}}
\end{equation}
where we set $\mca =\liminf_{\ep \to 0} \lambda_{\ep}^{n^2\ep/(2s)} \ge 1$. This allows us to find that
\[ \mca^{q_0 \gamma_0 \over q_0+1} S_{q_0,p}^{-1}= \mca^{q_0 \gamma_0 \over q_0+1} S_{q_0,p}^{-{n q_0 \gamma_0 \over 2s (q_0+1)}}
\le \frac{1}{\|W\|_{L^{\frac{q_0+1}{q_0}}(D)}^{\gamma_0}} \le S_{q_0,p}^{-1},\]
where \eqref{eq-a-17} and \eqref{eq-a-77} are used for the last inequality.
Since $\mca \ge 1$, the above computation implies that $\mca = 1$ and all the inequalities in \eqref{eq-a-17}, \eqref{eq-a-77} and \eqref{eq-a-35} should be equal.
Therefore $D = \R^n$ (from which we infer that $\lambda_{\ep} \text{dist}(x_{\ep}, \pa \Omega) \to \infty$ as $\ep \to 0$),
$W$ attains \eqref{eq-sob}, and \eqref{eq-tw-con} is true. The proof of the lemma is completed.
\end{proof}

\begin{lem}\label{lem-tutv-2}
Let $\mcu = W^{1/q_0}$. It holds that
\[\lim_{\ep \to 0} \int_{\R^n} |\tw_{\ep}(x) - W(x)|^{\frac{q_{\ep} + 1}{q_{\ep}}} dx = 0\quad \textrm{and}\quad \lim_{\ep \to 0} \int_{\R^n} |\tu_{\ep}(x) - \mcu(x)|^{q_{0}+1} dx =0.\]
\end{lem}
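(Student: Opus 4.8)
The plan is to upgrade the convergence $\tw_{\ep}\to W$ (a.e.\ on $\R^n$, by Lemma \ref{lem-a-4}) together with the convergence of masses \eqref{eq-tw-con} to strong convergence of $\tw_{\ep}$ to $W$ in the $\ep$-dependent Lebesgue space $L^{r_{\ep}}(\R^n)$, where $r_{\ep}:=(q_{\ep}+1)/q_{\ep}$, by a Brezis--Lieb type argument; since $q_{\ep}<q_0$, we have $r_{\ep}>r_0:=(q_0+1)/q_0$ and $r_{\ep}\to r_0$, so the argument must be run with a varying exponent. The statement for $\tu_{\ep}$ will then be deduced from a pointwise comparison plus Vitali's theorem. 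The two facts I would set up first are: (i) because $\max_{\Omega_{\ep}}\tu_{\ep}=1$ one has $0\le\tw_{\ep}\le1$ on $\Omega_{\ep}$, hence $0\le W\le1$ a.e.\ on $\R^n$, so that $W^{r_{\ep}}\le W^{r_0}\in L^1(\R^n)$ and $W^{r_{\ep}}\to W^{r_0}$ a.e.\ (giving $\int W^{r_{\ep}}\to\int W^{r_0}$ by dominated convergence); and (ii) the elementary inequality that for each $\delta>0$ there is $C_{\delta}$, independent of $r$ ranging over any fixed compact subinterval of $(1,\infty)$ (in particular for $r=r_{\ep}$ with $\ep$ small), with $\big||a+b|^r-|a|^r-|b|^r\big|\le\delta|a|^r+C_{\delta}|b|^r$ for all $a,b\in\R$; this comes from the mean value theorem and Young's inequality.

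For the first limit I would set $\Phi_{\ep}:=\big||\tw_{\ep}|^{r_{\ep}}-|\tw_{\ep}-W|^{r_{\ep}}-|W|^{r_{\ep}}\big|$ on $\R^n$ and apply (ii) with $a=\tw_{\ep}-W$, $b=W$, $r=r_{\ep}$, which gives $\big(\Phi_{\ep}-\delta|\tw_{\ep}-W|^{r_{\ep}}\big)^{+}\le C_{\delta}W^{r_{\ep}}\le C_{\delta}W^{r_0}\in L^1(\R^n)$. Since $\Phi_{\ep}\to0$ a.e.\ (using $\tw_{\ep}\to W$ a.e.\ and $r_{\ep}\to r_0$), dominated convergence yields $\int_{\R^n}\big(\Phi_{\ep}-\delta|\tw_{\ep}-W|^{r_{\ep}}\big)^{+}\,dx\to0$; and since $\sup_{\ep}\int_{\R^n}|\tw_{\ep}-W|^{r_{\ep}}\,dx<\infty$ (because $\int_{\Omega_{\ep}}\tw_{\ep}^{r_{\ep}}\,dx$ is bounded by \eqref{eq-tw-con}, and $\int W^{r_{\ep}}\le\int W^{r_0}$), I obtain $\limsup_{\ep}\int_{\R^n}\Phi_{\ep}\,dx\le\delta\sup_{\ep}\int_{\R^n}|\tw_{\ep}-W|^{r_{\ep}}\,dx$, so letting $\delta\to0$ forces $\int_{\R^n}\Phi_{\ep}\,dx\to0$. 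Finally, from the identity
\[\int_{\R^n}|\tw_{\ep}-W|^{r_{\ep}}\,dx=\Big(\int_{\R^n}\tw_{\ep}^{r_{\ep}}\,dx-\int_{\R^n}W^{r_{\ep}}\,dx\Big)-\int_{\R^n}\big(\tw_{\ep}^{r_{\ep}}-|\tw_{\ep}-W|^{r_{\ep}}-W^{r_{\ep}}\big)\,dx,\]
the parenthesized difference tends to $0$ by \eqref{eq-tw-con} and fact (i), while the last integral is controlled in absolute value by $\int\Phi_{\ep}\to0$; hence $\int_{\R^n}|\tw_{\ep}-W|^{r_{\ep}}\,dx\to0$.

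For the second limit, the key observation is the pointwise bound $\tu_{\ep}^{q_0+1}\le\tu_{\ep}^{q_{\ep}+1}=\tw_{\ep}^{r_{\ep}}$, which holds because $0\le\tu_{\ep}\le1$ and $q_0+1\ge q_{\ep}+1$. The first limit (strong $L^{r_{\ep}}$ convergence) makes the family $\{\tw_{\ep}^{r_{\ep}}\}$ uniformly integrable and tight on $\R^n$ (splitting $\tw_{\ep}^{r_{\ep}}\le2^{r_{\ep}-1}(|\tw_{\ep}-W|^{r_{\ep}}+W^{r_{\ep}})$, using $W^{r_{\ep}}\le W^{r_0}\in L^1$, the individual integrability of the finitely many $\tw_{\ep}^{r_{\ep}}$ with $\ep$ bounded away from $0$ along the subsequence), so by the above bound $\{\tu_{\ep}^{q_0+1}\}$ — and hence also $\{|\tu_{\ep}-\mcu|^{q_0+1}\}\le2^{q_0}(\tu_{\ep}^{q_0+1}+\mcu^{q_0+1})$ with $\mcu^{q_0+1}=W^{r_0}\in L^1$ — are uniformly integrable and tight. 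Since $\tu_{\ep}\to\mcu=W^{1/q_0}$ a.e.\ (from $\tw_{\ep}\to W$ a.e.\ and $q_{\ep}\to q_0$), Vitali's convergence theorem then gives $\int_{\R^n}|\tu_{\ep}-\mcu|^{q_0+1}\,dx\to0$.

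I expect the main obstacle to be purely the bookkeeping created by the $\ep$-dependent exponent: keeping the Brezis--Lieb constant $C_{\delta}$ uniform in $r=r_{\ep}$, and replacing the $\ep$-dependent powers $W^{r_{\ep}}$ and $|\tw_{\ep}-W|^{r_{\ep}}$ by fixed $L^1$-dominants — which is exactly where the uniform bound $\tw_{\ep},W\le1$ is essential. Once that is arranged, both conclusions reduce to the dominated and Vitali convergence theorems. (As in Lemmas \ref{lem-a-4} and \ref{lem-a-2}, all limits above are understood along the subsequence produced there.)
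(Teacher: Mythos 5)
Your proposal is correct. For the first limit you take the same route as the paper: a Brezis--Lieb argument run with the $\ep$-dependent exponent $r_{\ep}=(q_{\ep}+1)/q_{\ep}$, combined with the mass convergence \eqref{eq-tw-con}; the paper merely says ``as in the proof of the Brezis--Lieb lemma'', while you supply precisely the bookkeeping it leaves implicit (uniformity of $C_{\delta}$ in $r$ near $r_0$, and the fixed dominant $W^{r_{\ep}}\le W^{r_0}\in L^1$ coming from $0\le W\le 1$). For the second limit your mechanism is genuinely different. The paper proves the pointwise chain
\[
|\tu_{\ep}-\mcu|^{q_0+1}\le C\,|\tw_{\ep}-W|^{\frac{q_{\ep}+1}{q_{\ep}}}+C\,\bigl|W^{\frac{1}{q_{\ep}}}-W^{\frac{1}{q_0}}\bigr|^{q_0+1},
\]
using the uniform $L^{\infty}$ bound to lower the exponent from $q_0+1$ to $q_{\ep}+1$ and the elementary inequality $|a^{1/q_{\ep}}-b^{1/q_{\ep}}|\le|a-b|^{1/q_{\ep}}$, and then integrates: the first term tends to zero by the first limit and the second by dominated convergence. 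You instead extract uniform integrability and tightness of $\{\tu_{\ep}^{q_0+1}\}$ from the bound $\tu_{\ep}^{q_0+1}\le\tw_{\ep}^{r_{\ep}}$ together with the first limit, and conclude with Vitali's theorem. Both arguments rest on exactly the same two inputs---the first limit and $0\le\tu_{\ep}\le 1$---so the difference is one of packaging: the paper's pointwise domination is more explicit and quantitative, while your Vitali route avoids the exponent-juggling in the chain of inequalities at the cost of having to verify tightness on the unbounded domain, which you do correctly.
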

\begin{proof}
As in the proof of the Brezis-Lieb lemma, it can be checked that
\[\lim_{\ep \to 0} \int_{\R^n} \( |\tw_{\ep}(x)|^{\frac{q_{\ep} + 1}{q_{\ep}}} dx - |\tw_{\ep}(x) - W(x)|^{\frac{q_{\ep} + 1}{q_{\ep}}}  \) dx = \int_{\R^n} |W(x)|^{\frac{q_0+1}{q_0}} dx.\]
Combining this with \eqref{eq-tw-con}, we obtain the first estimate.
For the second estimate, we note that since $\tilde{w}_{\ep}$ and $W$ are uniformly $L^{\infty}$ bounded,
\begin{align*}
|\tilde{u}_{\ep} (x) - \tu_{\ep} (x)|^{q_0 +1} &\leq |\tilde{w}_{\ep}^{\frac{1}{q_{\ep}}} - W^{\frac{1}{q_0}} |^{q_0 +1}
\\
&\leq  C|\tilde{w}_{\ep}^{\frac{1}{q_{\ep}}} - W^{\frac{1}{q_{\ep}}} |^{q_0 +1}+C|W^{\frac{1}{q_{\ep}}} - W^{\frac{1}{q_0}} |^{q_0 +1}
\\
&\leq C |\tilde{w}_{\ep}^{\frac{1}{q_{\ep}}} - W^{\frac{1}{q_{\ep}}} |^{q_{\ep} +1}+C|W^{\frac{1}{q_{\ep}}} - W^{\frac{1}{q_0}} |^{q_0 +1}
\\
&\leq C |\tilde{w}_{\ep} - W |^{\frac{q_{\ep} +1}{q_{\ep}}}+C|W^{\frac{1}{q_{\ep}}} - W^{\frac{1}{q_0}} |^{q_0 +1}.
\end{align*}
Therefore, using the first equality of the lemma and the dominated convergence theorem, we deduce that
\begin{align*}
&\lim_{\ep \rightarrow 0}\int_{\mathbb{R}^n} |\tilde{u}_{\ep} (x) - \tu_{\ep} (x)|^{q_0 +1} dx
\\
&\leq C \int_{\mathbb{R}^n}|\tilde{w}_{\ep} - W |^{\frac{q_{\ep} +1}{q_{\ep}}}dx +C\int_{\mathbb{R}^n}|W^{\frac{1}{q_{\ep}}} - W^{\frac{1}{q_0}} |^{q_0 +1}dx =0,
\end{align*}
which proves the second estimate.
\end{proof}

\begin{proof}[Proof of Theorem \ref{thm-1-2}]
Estimates \eqref{eq-u-L^inf} and \eqref{eq-tw-c} follow from Lemmas \ref{lem-a-2} and \ref{lem-tutv-2}, respectively.
Lemma \ref{lem-a-2} also implies that $W$ is a minimizer of the Hardy-Littlewood-Sobolev inequality \eqref{eq-sob}.
\end{proof}

\section{Asymptotic Behavior of Rescaled Solutions}\label{sec-glo-bdd}
In this section and Section \ref{sec-Green}, we always assume that $\Omega$ is convex.
Also, we write $(u_{\ep}, v_{\ep})$ and $(\tu_{\ep}, \tv_{\ep})$
to denote a minimal energy solution to \eqref{eq-main} with $q = q_{\ep}$ and its rescaling given by \eqref{eq-tuv}.

\subsection{The location of the concentration point $x_0$}
Here we are concerned with uniform boundedness of solutions $(u,v)$ to \eqref{eq-main} near the boundary $\pa \Omega$, provided that $(p,q)$ is subcritical.
For any sufficiently small $\delta > 0$, let
\[\mco(\Omega, \delta) = \{x \in \Omega: \text{dist}(x,\pa \Omega) < \delta\}\]
and
\[\mci(\Omega, \delta) = \{x \in \Omega: \text{dist}(x,\pa \Omega) > \delta\}.\]
\begin{lem}\label{lem-bb}
Suppose that $\Omega$ is convex and $(p,q)$ is a subcritical pair such that $p, q > 1 + \eta$ for a fixed value $\eta > 0$.
Then there are positive constants $C$ large and $\delta$ small depending only on $n,\, s,\, \Omega,\, \eta$ such that
\[\sup_{x \in \mco(\Omega, \delta)} \left[u (x) + v(x)\right] \le C\]
for any solution $(u,v)$ to \eqref{eq-main}.
\end{lem}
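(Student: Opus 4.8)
The plan is to combine the method of moving planes with a doubling-and-rescaling argument. The moving planes, performed on the Caffarelli--Silvestre extension and using convexity in an essential way, will exhibit a boundary strip whose width depends only on $\Omega$ on which $u$ and $v$ are strictly monotone along inward normals; a blow-up argument will then preclude a loss of the $L^\infty$ bound on that strip.

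First I would pass to the extensions $U, V \in \mcd^{1,2}(\mcc; t^{1-2s})$ of $u, v$ over the half-cylinder $\mcc = \Omega \times (0,\infty)$, so that $U, V$ solve $\textnormal{div}(t^{1-2s}\nabla\, \cdot) = 0$ in $\mcc$, vanish on $\pa_L \mcc$, and satisfy $\pa_{\nu}^s U = v^p$, $\pa_{\nu}^s V = u^q$ on $\Omega \times \{0\}$. Fix $\bar{x} \in \pa\Omega$ and, after a rotation, assume the inner normal at $\bar{x}$ is $e_1$; for $\lambda > 0$ write $T_\lambda = \{x_1 = \lambda\}$, let $x^\lambda$ denote the reflection of $x$ across $T_\lambda$, and set $\Sigma_\lambda = \{x \in \Omega : x_1 < \lambda\}$. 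Because $\Omega$ is smooth, bounded and convex, there exists $\lambda_0 = \lambda_0(\Omega) > 0$, independent of $\bar{x}$, such that the reflected cap $\Sigma_\lambda^\lambda := \{x^\lambda : x \in \Sigma_\lambda\}$ is contained in $\Omega$ for every $\lambda \le \lambda_0$. On $\Sigma_\lambda \times (0,\infty)$ — which is the right domain precisely because $\Sigma_\lambda^\lambda \subset \Omega$, so that $(x,t) \mapsto U(x^\lambda, t)$ and $(x,t)\mapsto V(x^\lambda,t)$ still solve the degenerate equation there — the pair $\big(U(x^\lambda,t) - U(x,t),\, V(x^\lambda,t) - V(x,t)\big)$ solves a cooperative linear system (since $r \mapsto r^p$ and $r \mapsto r^q$ are increasing, so the differences of the conormal data couple with nonnegative coefficients), is nonnegative on the lateral boundary (it vanishes on $(T_\lambda \cap \overline{\Omega}) \times (0,\infty)$, and on $(\pa\Omega \cap \{x_1 < \lambda\}) \times (0,\infty)$ it equals $\big(U(x^\lambda,t), V(x^\lambda,t)\big) \ge 0$ because $U = V = 0$ on $\pa_L\mcc$), and decays as $t \to \infty$. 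Starting the procedure for $\lambda$ near $0$ by a maximum principle in narrow domains and continuing it up to $\lambda_0$ with Lemma \ref{lem-maximum} together with the strong maximum principle and Hopf lemma for $\textnormal{div}(t^{1-2s}\nabla\, \cdot)$ from \cite{CaS}, one obtains $u(x) \le u(x^\lambda)$ and $v(x) \le v(x^\lambda)$ for all $x \in \Sigma_\lambda$ and $\lambda \le \lambda_0$. Hence there is $\delta = \delta(\Omega) \in (0, \lambda_0)$ such that $u$ and $v$ are strictly increasing along inward normals throughout $\mco(\Omega, \delta)$, and every $x$ with $\textnormal{dist}(x, \pa\Omega) < \delta$ can be reflected onto a point of $\mci(\Omega, \delta)$ at which $u$ and $v$ are at least as large.

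It then suffices to bound $u + v$ uniformly near $\pa\Omega$, and I would do this by contradiction through a doubling-and-rescaling argument applied to the scale-invariant quantity $N = u^{1/\alpha} + v^{1/\beta}$, where $\alpha = \tfrac{2s(p+1)}{pq-1}$ and $\beta = \tfrac{2s(q+1)}{pq-1}$ are the exponents that leave \eqref{eq-main} invariant (cf. \eqref{eq-ab}). If the conclusion failed one would get solutions $(u_m, v_m)$ and points $z_m$ with $\textnormal{dist}(z_m, \pa\Omega) \to 0$ and $N_m(z_m) \to \infty$; a doubling lemma supplies nearby points $y_m$ — still tending to $\pa\Omega$ — at which $N_m$ is large and is bounded by twice its value on a ball of radius comparable to $N_m(y_m)^{-1}$. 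Rescaling at scale $N_m(y_m)^{-1}$ yields a nontrivial, globally bounded solution of the Lane--Emden system with the same exponents, living on $\R^n$ when $N_m(y_m)\,\textnormal{dist}(y_m, \pa\Omega) \to \infty$ and on a half-space with exterior zero data when that quantity stays bounded. The first case is excluded by the Liouville theorem of Quaas--Xia \cite{QX2}. In the second case the limit inherits the monotonicity coming from the moving planes and, by the moving-plane method on the half-space itself, is independent of the tangential variables, so it reduces to a one-dimensional fractional Lane--Emden system on $(0,\infty)$ with vanishing, bounded, positive, monotone components, which is readily seen to be impossible; it is here that convexity is indispensable and that one sidesteps a half-space Liouville theorem for systems. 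The hypothesis $p, q > 1 + \eta$ (together with subcriticality) is used to keep $\alpha, \beta$ bounded and to place $(p, q)$ in the range of validity of the above Liouville statements, and it is what allows $C$ and $\delta$ to depend only on $n, s, \Omega, \eta$.

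The main obstacle is the moving-plane step. The spectral fractional Laplacian is neither translation invariant nor well behaved under the Kelvin transform, so the reflections must be carried out on the degenerate extension over $\Omega \times (0,\infty)$; one must check carefully that the reflected caps stay inside $\Omega$ (this is exactly where convexity enters and why a general smooth domain does not suffice), that the degenerate maximum principle applies to the linearized cooperative system, and that the lateral Dirichlet condition behaves well under reflection, with the usual care for the unbounded $t$-direction. A secondary technical point is to verify that the constants produced by the doubling and Liouville arguments do not deteriorate as $(p, q)$ approaches the critical hyperbola.
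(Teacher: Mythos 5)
Your first step---moving planes near the boundary, using convexity to produce a strip of width depending only on $\Omega$ on which $u$ and $v$ are monotone along inward directions---is exactly the first half of the paper's argument (the paper performs the reflections so as to obtain, for each boundary point, a whole cone of admissible directions, which is what later yields a set of positive measure; keep that in mind). The second step of your proposal, however, has a genuine gap, in fact two.

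First, the whole-space Liouville theorem you invoke is not available in the generality you need. Nonexistence of positive bounded solutions of the Lane--Emden system on $\R^n$ for an arbitrary subcritical pair $(p,q)$ is the Lane--Emden conjecture, open even for $s=1$ in higher dimensions; the fractional result of Quaas--Xia \cite{QX2} holds only under additional restrictions on $n,s,p,q$, not for every subcritical pair with $p,q>1+\eta$. Second, and more decisively, the constants in the lemma depend only on $n,s,\Omega,\eta$, so the bound must be uniform over subcritical pairs arbitrarily close to the critical hyperbola---this is precisely how the lemma is used, with $q=q_\ep\to q_0$. If you run the contradiction/doubling argument along a sequence $(p_m,q_m)$ approaching the critical hyperbola, the rescaled limit solves the \emph{critical} system on $\R^n$ (or a half-space), which admits nontrivial bounded positive solutions (the extremals appearing in Theorem \ref{thm-1-2}); and since the lemma concerns arbitrary solutions, there is no energy bound to quantize the blow-up away. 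So the Liouville step cannot close, and the issue you call ``secondary'' (non-deterioration of constants near the critical hyperbola) is in fact fatal to this route.

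The paper closes the argument in an elementary way that avoids all of this. Testing both equations of \eqref{eq-main} against the first Dirichlet eigenfunction $\phi_1$ and applying Jensen's inequality to the right-hand sides (this is exactly where $p,q>1+\eta$ is used) yields the uniform weighted $L^1$ bounds $\int_\Omega u\phi_1\,dx\le C$ and $\int_\Omega v\phi_1\,dx\le C$. The moving-plane monotonicity then guarantees that every $x\in\mco(\Omega,\delta_1)$ admits a set $B_x\subset\mci(\Omega,\delta_1/2)$ with $|B_x|\ge m_1>0$ on which $u\ge u(x)$ and $v\ge v(x)$, so averaging over $B_x$ converts the $L^1$ bound into the desired pointwise bound on the strip. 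You should replace your blow-up step by this averaging argument.
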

\begin{proof}
Assume that $(\lambda_1, \phi_1)$ is the first eigenpair of the Dirichlet Laplacian $-\Delta$ in $\Omega$. Then we have
\[\lambda_1^s \int_{\Omega} u \phi_1 dx = \int_{\Omega} v^p \phi_1 dx
\quad \text{and} \quad
\lambda_1^s \int_{\Omega} v \phi_1 dx = \int_{\Omega} u^q \phi_1 dx.\]
Applying Jensen's inequality in the right-hand sides (which is possible due to the condition $p, q > 1$), we get
\[{\lambda_1}^s \int_{\Omega} u \phi_1 dx \ge C \(\int_{\Omega} v \phi_1 dx\)^p
\quad \text{and} \quad
{\lambda_1}^s \int_{\Omega} v \phi_1 dx \ge C \(\int_{\Omega} u \phi_1 dx\)^q,\]
which easily yields
\begin{equation}\label{eq-bb}
\int_{\Omega} v \phi_1 dx \le C \quad \text{and} \quad \int_{\Omega} u \phi_1 dx \le C.
\end{equation}
Since $\Omega$ is convex, one can find small constants $\delta_0 > 0$ and $m_0 > 0$ such that
for each point $x \in \pa \Omega$ there exists an open connected subset $Q_x$ of the unit sphere $\ms^{n-1}$ satisfying
\begin{itemize}
\item[-] $|Q_x| > m_0$ and $A_x := \{ x + tw : 0 \le t \le \delta_0, w \in Q_x \} \subset \Omega$;
\item[-] For each 3-tuple $(x, t, w)$ such that $x + tw \in A_x$,
let $P_{x,t,w}$ be a plane which passes through $x + tw$ and has $w$ as its normal vector.
Suppose that it divides $\Omega$ into two parts $\Omega_1$ and $\Omega_2$ where $x \in \pa \Omega_1$.
Then the reflection of $\Omega_1$ with respect to the plane $P_{x,t,w}$ is contained in $\Omega_2$.
\end{itemize}
Then the moving plane argument guarantees
\[u(x+t_1 w) \le u(x+ t_2 w) \quad \text{and} \quad v(x+t_1 w) \le v(x+ t_2 w) \quad \text{for all } 0 \le t_1 \le t_2 \le \delta_0 \text{ and } w \in Q_x.\]
Consequently, there exist small numbers $\delta_1 > 0$ and $m_1 > 0$ such that for each $x \in \mco(\Omega, \delta_1)$,
one can select $B_x \subset \Omega$ satisfying
\begin{itemize}
\item[-] $|B_x| > m_1$ and $B_x \subset \mci(\Omega, \delta_1/2)$;
\item[-] $u(y) > u(x)$ and $v(y) > v(x)$ for every $y \in B_x$.
\end{itemize}
As a result, we have
\[u(x) \le \frac{1}{|B_x|} \int_{B_x} u(y) dy \le \frac{1}{m_1} \int_{\mathcal{I}(\Omega, \delta_1/2)} u(y) dy \quad \text{for all } x \in \mco(\Omega,\delta_1)\]
and the same inequalities for $v$.
The $L^1$-bounds \eqref{eq-bb} of $u$ and $v$ give the desired uniform bounds of $u$ and $v$ in $\mco(\Omega,\delta_1)$.
\end{proof}

\subsection{Global boundedness of rescaled solutions}
The main goal of this subsection is to show uniform pointwise boundedness of the rescaled solutions $(\tu_{\ep}, \tv_{\ep})$ with respect to $\ep > 0$.
For this aim, we first apply the Caffarelli-Silvestre type extension, the Kelvin transform and a certain localization technique.
Then we utilize a variant of the Brezis-Kato type argument.

\medskip
For a given minimal energy solution $(u_{\ep}, v_{\ep})$ to \eqref{eq-main} with $q = q_{\ep}$,
define its \textit{$s$-harmonic extension} $(U_{\ep}, V_{\ep}) \in (\mcd^{1,2}(\mcc; t^{1-2s}))^2$ as the solution of the degenerate local elliptic system
\begin{equation}\label{eq-lc-1}
\begin{cases}
\text{div}(t^{1-2s} \nabla U_{\ep}) = \text{div}(t^{1-2s} \nabla V_{\ep}) = 0 &\text{in } \mcc,\\
U_{\ep}, V_{\ep} > 0 &\text{in } \mcc,\\
U_{\ep} = V_{\ep} = 0 &\text{on } \pa_L \mcc,\\
U_{\ep} = u_{\ep}, V_{\ep} = v_{\ep} &\text{on } \Omega \times \{0\}
\end{cases}
\end{equation}
where $\mcc = \Omega \times (0,\infty)$ and $\pa_L \mcc = \pa \Omega \times (0,\infty)$.
By \cite{CT, ST, CDDS, BCDS1, T2}, it holds that
\[\pa_{\nu}^s U_{\ep} = (-\Delta)^s u_{\ep} = v_{\ep}^p
\quad \text{and} \quad
\pa_{\nu}^s V_{\ep} = (-\Delta)^s v_{\ep} = u_{\ep}^{q_{\ep}} \quad \text{on } \Omega \times \{0\}.\]
We also rescale them as
\[\wtu_{\ep}(z) = \lambda_{\ep}^{-\alpha_{\ep}} U_{\ep} (\lambda_{\ep}^{-1} z + (x_{\ep},0))
\quad \text{and} \quad
\wtv_{\ep}(z) = \lambda_{\ep}^{-\beta_{\ep}} V_{\ep} (\lambda_{\ep}^{-1} z + (x_{\ep},0))\]
for $z \in \mcc_{\ep} := \Omega_{\ep} \times (0,\infty) = \lambda_{\ep}(\Omega-x_{\ep}) \times (0,\infty)$
where the definition of $\alpha_{\ep}$, $\beta_{\ep}$ and $\lambda_{\ep}$ can be found in \eqref{eq-ab} and \eqref{eq-max}.
Then $(\wtu_{\ep}, \wtv_{\ep})$ satisfies
\begin{equation}\label{eq-tUV}
\begin{cases}
\text{div}(t^{1-2s} \nabla \wtu_{\ep}) = \text{div}(t^{1-2s} \nabla \wtv_{\ep}) = 0 &\text{in } \mcc_{\ep} := \Omega_{\ep} \times (0,\infty),
\\
\wtu_{\ep}, \wtv_{\ep} > 0 &\text{in } \mcc_{\ep},
\\
\wtu_{\ep} = \wtv_{\ep} = 0 &\text{on } \pa_L \mcc_{\ep} := \pa \Omega_{\ep} \times (0, \infty),
\\
\pa_{\nu}^s \wtu_{\ep} = \tv_{\ep}^p, \pa_{\nu}^s \wtv_{\ep} = \tu_{\ep}^{q_{\ep}} &\text{on } \Omega_{\ep} \times \{0\}.
\end{cases}
\end{equation}
Here $(\tu_{\ep}, \tv_{\ep})$ is the rescaled solution given by \eqref{eq-tuv}.

\begin{prop}\label{lem-gu}
Suppose that $(\wtmcu, \wtmcv)$ is a pair of smooth functions in $\R^n$ which satisfies
\begin{equation}\label{eq-decay}
\wtmcv(r) \simeq r^{-(n-2s)}
\quad \text{and} \quad
\begin{cases}
\wtmcu(r) \simeq r^{-(n-2s)} &\text{for } \frac{n}{n-2s} < p < \frac{n+2s}{n-2s},
\\
\wtmcu(r) \simeq r^{-(n-2s)} \log r &\text{for } p = \frac{n}{n-2s},
\\
\wtmcu(r) \simeq r^{-(p(n-2s)-2s)} &\text{for } \frac{2s}{n-2s} < p < \frac{n}{n-2s} \text{ and } p \ge 1.
\end{cases}
\end{equation}
Here $r := |x|$ and $f(r) \simeq g(r)$ means that $c^{-1} f(r) \le g(r) \le c f(r)$ as $r \to \infty$ for some $c > 1$.
Then there exists a constant $C > 0$ depending only on $n$, $s$, $p$ and $\Omega$ such that
\begin{equation}\label{eq-bound}
\tu_{\ep} \le C \wtmcu \quad \text{and} \quad \tv_{\ep} \le C \wtmcv \quad \text{in } \Omega_{\ep}
\end{equation}
for all $\ep > 0$ small.
\end{prop}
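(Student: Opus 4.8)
The plan is to derive \eqref{eq-bound} by first compactifying the rescaled extended problem \eqref{eq-tUV} with a Kelvin transform, so that the decay of $(\tu_\ep,\tv_\ep)$ at infinity becomes local regularity of the transformed pair near a single point, and then running a Brezis--Kato bootstrap on the associated integral equation. Before that I would assemble the uniform-in-$\ep$ a priori bounds that feed the iteration: since $\max_{x\in\Omega_\ep}\tu_\ep(x)=1$ we have $\tu_\ep\le 1$; the representation $\tv_\ep=(-\Delta)^{-s}\tu_\ep^{q_\ep}$ together with \eqref{eq-G}, the splitting $|x-y|<1$ versus $|x-y|\ge 1$, and H\"older's inequality (legitimate because $(n-2s)(q_\ep+1)>n$, which holds since $q_\ep\ge p>2s/(n-2s)$, with constants uniform for small $\ep$) and the bound on $\|\tu_\ep\|_{L^{q_\ep+1}(\R^n)}$ from Lemma~\ref{lem-tutv-2} give $\tv_\ep\le C$ uniformly. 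The convergence $\tu_\ep\to\mcu$ in $L^{q_0+1}(\R^n)$ further yields, for each $\eta>0$, numbers $R$ and $\ep_0>0$ with $\|\tu_\ep\|_{L^{q_\ep+1}(\{|x|>R\})}<\eta$ for $\ep<\ep_0$; by the sub-solution property this upgrades to $\tu_\ep,\tv_\ep\to 0$ pointwise at infinity, uniformly in small $\ep$. Near $\pa\Omega_\ep$ the rescaled version of Lemma~\ref{lem-bb} gives $\tu_\ep+\tv_\ep\le C\lambda_\ep^{-\alpha_\ep}\to 0$, and on any fixed compact set interior estimates for \eqref{eq-tUV} give uniform bounds. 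Hence \eqref{eq-bound} is only in doubt on a range $R_0<|x|<c\,\mathrm{dist}(0,\pa\Omega_\ep)$.

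Next I would apply the Kelvin transform $z\mapsto z/|z|^2$ in $\overline{\R^{n+1}_+}$, setting $\hat U_\ep(z)=|z|^{-(n-2s)}\wtu_\ep(z/|z|^2)$ and similarly $\hat V_\ep$, with flat-boundary traces $\hat u_\ep,\hat v_\ep$. By conformal invariance of $\operatorname{div}(t^{1-2s}\nabla\cdot)$ these are again degenerate-harmonic, vanish on the transformed lateral boundary, and near the origin satisfy $\pa_\nu^s\hat U_\ep(x,0)=|x|^{\,p(n-2s)-n-2s}\,\hat v_\ep^{\,p}(x)$ and $\pa_\nu^s\hat V_\ep(x,0)=|x|^{\,q_\ep(n-2s)-n-2s}\,\hat u_\ep^{\,q_\ep}(x)$. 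The first weight has exponent in $(-n,0)$ since $2s/(n-2s)<p<(n+2s)/(n-2s)$, so it is locally integrable; the second has positive exponent for small $\ep$ because $q_0>(n+2s)/(n-2s)$. Multiplying by a cutoff supported in a small half-ball $B^{n+1}_+(0,r)$ produces the localized system on $B^{n+1}_+(0,r)$, whose inhomogeneities are these Neumann data plus error terms supported in $\{|x|\sim r\}$ that are controlled by the a priori bounds; passing to the integral representation and invoking the maximum principle (Lemma~\ref{lem-maximum}, as in \eqref{eq-green-rep}) gives $\hat v_\ep(x)\le g_{n,s}\int_{B^n(0,r)}|x-y|^{-(n-2s)}\big(|y|^{\,q_\ep(n-2s)-n-2s}\hat u_\ep^{\,q_\ep}(y)+\mathrm{err}(y)\big)\,dy$ and the analogous inequality for $\hat u_\ep$.

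The heart of the proof is the Brezis--Kato bootstrap on this pair of integral inequalities. The a priori bounds give $\hat u_\ep,\hat v_\ep\lesssim|x|^{-(n-2s)}$ near $0$, hence $\hat u_\ep,\hat v_\ep\in L^m(B^n(0,r))$ for every $m<n/(n-2s)$, with norms uniform in small $\ep$ and, because $\tu_\ep\to 0$ at infinity, small for small $r$. Inserting this into the integral inequalities and iterating as in the Brezis--Kato argument — truncating and exploiting the smallness of the critical part of the effective potentials $|x|^{\,p(n-2s)-n-2s}\hat v_\ep^{\,p-1}$ and $|x|^{\,q_\ep(n-2s)-n-2s}\hat u_\ep^{\,q_\ep-1}$ on small half-balls — raises the common integrability exponent in finitely many steps and yields $\hat v_\ep\in L^\infty(B^n(0,r/2))$, uniformly in small $\ep$. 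With $\hat v_\ep$ bounded, $|y|^{\,p(n-2s)-n-2s}\hat v_\ep^{\,p}\lesssim|y|^{-\kappa}$ with $\kappa:=n+2s-p(n-2s)\in(0,n)$, and the elementary near-origin behaviour of $(-\Delta)^{-s}(|y|^{-\kappa})$ — bounded when $\kappa<2s$, logarithmic when $\kappa=2s$, comparable to $|x|^{-(\kappa-2s)}$ when $2s<\kappa<n$ — gives $\hat u_\ep\le C$ for $p>n/(n-2s)$, $\hat u_\ep\le C\log(1/|x|)$ for $p=n/(n-2s)$, and $\hat u_\ep\le C|x|^{\,p(n-2s)-n}$ for $2s/(n-2s)<p<n/(n-2s)$. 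Undoing the Kelvin transform converts $\hat v_\ep\le C$ into $\tv_\ep(x)\le C|x|^{-(n-2s)}$ and the three bounds on $\hat u_\ep$ into the three bounds on $\tu_\ep$ of \eqref{eq-decay}; combined with the compact-set and near-boundary estimates this gives \eqref{eq-bound}.

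The main obstacle is making the Brezis--Kato bootstrap quantitative and \emph{uniform in $\ep$}: after the Kelvin transform we are exactly on the critical Sobolev hyperbola, so there is no automatic gain of integrability, and the iteration must instead be driven by the smallness of the truncated effective potentials on small half-balls. One must verify that both the starting integrability and the number of iteration steps can be fixed independently of small $\ep$ — using $q_\ep\to q_0$ and the uniform far-field smallness of $\tu_\ep$ in $L^{q_\ep+1}$ — and that running the two coupled equations with the two distinct singular weights $|x|^{p(n-2s)-n-2s}$ and $|x|^{q_\ep(n-2s)-n-2s}$ simultaneously does not break the closure of the scheme. The remaining work, tracking the exponents through the three regimes $p<n/(n-2s)$, $p=n/(n-2s)$, $p>n/(n-2s)$, is routine but lengthy.
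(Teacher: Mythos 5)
Your proposal follows essentially the same route as the paper: Caffarelli--Silvestre extension, Kelvin transform of the extended pair, localization to a small half-ball controlled by the maximum principle (Lemma~\ref{lem-maximum}), a Brezis--Kato iteration on the resulting coupled integral inequalities whose closure is driven by the smallness of the weighted potentials on small half-balls (obtained, as in the paper's Lemma~\ref{eq-AB}, from the strong $L^{q_\ep+1}$ convergence of $\tu_\ep$), and finally the three-regime computation of $(-\Delta)^{-s}$ applied to $|y|^{-\kappa}$. The only notable technical tool you leave implicit is the doubly weighted (Stein--Weiss) Hardy--Littlewood--Sobolev inequality \eqref{eq-dhls}, which the paper uses to absorb the power weights in the iteration; otherwise the argument is the same.
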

\noindent We first prepare several preliminary lemmas which are needed to solve Proposition \ref{lem-gu}.

\medskip
Let $\kappa$ be the inversion in $\R_+^{n+1} \setminus \{0\}$, that is, $\kappa(z)= z/|z|^2$ for all $z \in \R_+^{n+1} \setminus \{0\}$.
Moreover we define the \textit{Kelvin transform} $(F_{\ep}, G_{\ep})$  of $(\wtu_{\ep}, \wtv_{\ep})$ by
\begin{equation}\label{eq-bound-6}
F_{\ep}(z) = |z|^{-(n-2s)} \wtu_{\ep}(\kappa(z)) \quad \text{and} \quad G_{\ep}(z) = |z|^{-(n-2s)} \wtv_{\ep}(\kappa(z)) \quad \text{for } z \in \kappa(\mcc_{\ep})
\end{equation}
(do not confuse the above $G_{\ep}$ with a rescaled Green's function defined in \eqref{eq-G_ep}).
Then a direct computation using \eqref{eq-tUV} shows that
\begin{equation}\label{eq-res}
\begin{cases}
\text{div}(t^{1-2s} \nabla F_{\ep}) = \text{div}(t^{1-2s} \nabla G_{\ep}) = 0 &\text{in } \kappa(\mcc_{\ep}),
\\
F_{\ep} = G_{\ep} = 0 &\text{on } \kappa(\pa_L \mcc_{\ep}),
\\
\pa_{\nu}^s F_{\ep}(x,0) = |x|^{-(n+2s) + (n-2s)p} G_{\ep}^p(x,0) &\text{on } \kappa (\Omega_{\ep}) \times \{0\},\\
\pa_{\nu}^s G_{\ep}(x,0) = |x|^{-(n+2s) + (n-2s)q_{\ep}} F_{\ep}^{q_\ep}(x,0) &\text{on } \kappa (\Omega_{\ep}) \times \{0\}.
\end{cases}
\end{equation}
To prove the proposition, it is sufficient to compute the blow-up rate of $\{(F_{\ep}, G_{\ep})\}_{\ep > 0}$ near the origin.
For this aim, it is convenient to localize $(F_{\ep}, G_{\ep})$ in the following way:
Take a small $r > 0$ and define $\whf_{\ep}$ and $\whg_{\ep}: B^{n+1}_+(0,r) \to \R$ to solve
\begin{equation}\label{eq-bound-7}
\begin{cases}
\text{div}(t^{1-2s} \nabla \whf_{\ep}) = 0 &\text{in } B^{n+1}_+(0,r),
\\
\whf_{\ep} = 0 &\text{on } B^{n+1}_+(0,r) \cap \pa \kappa(\mcc_{\ep}),
\\
\whf_{\ep} = 0 &\text{on } \pa B^{n+1}_+(0,r) \cap \R^{n+1}_+,
\\
\pa_{\nu}^s \whf_{\ep}(x,0) = |x|^{-(n+2s) + (n-2s)p} G_{\ep}^p(x,0) &\text{on } (B^n(0,r) \cap \kappa (\Omega_{\ep})) \times \{0\}
\end{cases}
\end{equation}
and
\begin{equation}\label{eq-bound-8}
\begin{cases}
\text{div}(t^{1-2s} \nabla \whg_{\ep}) = 0 &\text{in } B^{n+1}_+(0,r),
\\
\whg_{\ep} = 0 &\text{on } B^{n+1}_+(0,r) \cap \pa \kappa(\mcc_{\ep}),
\\
\whg_{\ep} = 0 &\text{on } \pa B^{n+1}_+(0,r) \cap \R^{n+1}_+,
\\
\pa_{\nu}^s \whg_{\ep}(x,0) = |x|^{-(n+2s) + (n-2s)q_{\ep}} F_{\ep}^{q_{\ep}}(x,0) &\text{on } (B^n(0,r) \cap \kappa (\Omega_{\ep})) \times \{0\}.
\end{cases}
\end{equation}
Then the differences $R_1 := F_{\ep} - \whf_{\ep}$ and $R_2 := G_{\ep} - \whg_{\ep}$ satisfy
\[\begin{cases}
\text{div}(t^{1-2s} \nabla R_1) = \text{div}(t^{1-2s} \nabla R_2) = 0 &\text{in } B^{n+1}_+(0,r),
\\
R_1 = R_2 = 0 &\text{on } B^{n+1}_+(0,r) \cap \pa \kappa(\mcc_{\ep}),
\\
R_1 = F_{\ep},\, R_2 = G_{\ep} &\text{on } \pa B^{n+1}_+(0,r) \cap \R^{n+1}_+,
\\
\pa_{\nu}^s R_1 = \pa_{\nu}^s R_2 = 0 &\text{on } (B^n(0,r) \cap \kappa (\Omega_{\ep})) \times \{0\}.
\end{cases}\]
Since $\{(\tu_{\ep},\, \tv_{\ep})\}_{\ep > 0}$ are uniformly bounded in $\Omega_{\ep}$,
so is $\{(\wtu_{\ep}, \wtv_{\ep})\}$ in $\mathcal{C}_{\ep}$ by \eqref{eq-2-29} and \eqref{eq-G}.
Therefore, in view of the definition \eqref{eq-bound-6} of $(F_{\ep}, G_{\ep})$, we can find a constant $C > 0$ independent of $\ep > 0$ such that
\[|F_{\ep}| + |G_{\ep}| \le C \quad \text{on } \pa B^{n+1}_+(0,r) \cap \R^{n+1}_+.\]
Combining this with Lemma \ref{lem-maximum}, we observe that $R_1$ and $R_2$ are uniformly bounded in $B^{n+1}_+(0,r) \cap \kappa(\mcc_{\ep})$.
Hence there exists a constant $C > 0$ independent of $\ep > 0$ such that
\begin{equation}\label{eq-bound-1}
\whf_{\ep} - C \le F_{\ep} \le \whf_{\ep} + C \quad \text{and} \quad \whg_{\ep} - C \le G_{\ep} \le \whg_{\ep} + C \quad \text{in } B^{n+1}_+(0,r) \cap \kappa(\mcc_{\ep})
\end{equation}
for any $\ep > 0$ small enough.
In this reason, it suffices to estimate the functions $\whf_{\ep}$ and $\whg_{\ep}$ instead.

In what follows, we denote by $f(x)*g(x)$ the convolution $(f*g)(x)$.
\begin{lem}
Let us regard $\whf_{\ep}$ and $\whg_{\ep}$ as functions in $\R^{n+1}_+$ by taking 0 outside of their actual domains.
Also we set the operators
\[K_{\ep} f = |x|^{-(n-2s)} * \( |x|^{-\theta(q_0)(q_0+1)} \whf_{\ep}^{q_0-1}(x,0) f(x) \)\]
and
\[L_{\ep} f = |x|^{-(n-2s)} * \( |x|^{-\theta(p)(p+1)} \whg_{\ep}^{p-1}(x,0) f(x)\)\]
for a function $f$ in $\R^n$ where
\begin{equation}\label{eq-theta}
\theta(p) := {(n+2s) - (n-2s)p \over p+1} > 0 \quad \text{and} \quad \theta(q_0) := {(n+2s) - (n-2s) q_0 \over q_0+1} \underset{\eqref{eq-pq-e}}{=} - \theta(p),
\end{equation}
Then we have
\begin{equation}\label{eq-bound-3}
\whg_{\ep}(x,0) \le C \(K_{\ep} L_{\ep} \whg_{\ep}(x,0) + 1\) \quad \text{for every } x \in B^n(0,r) \cap \kappa (\Omega_{\ep}).
\end{equation}
\end{lem}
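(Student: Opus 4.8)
The plan is to use the Green's representation of solutions to degenerate equations of the form $\text{div}(t^{1-2s}\nabla U) = 0$ in the half-ball $B_+^{n+1}(0,r)$ with the Neumann data prescribed on the flat part, as recorded in \eqref{eq-green-rep}. Applying it to $\whg_{\ep}$, which solves \eqref{eq-bound-8}, I would first obtain the pointwise bound
\[
\whg_{\ep}(x,0) \le g_{n,s} \int_{B^n(0,r)\cap\kappa(\Omega_\ep)} \frac{|y|^{-(n+2s)+(n-2s)q_{\ep}}\, F_{\ep}^{q_{\ep}}(y,0)}{|x-y|^{n-2s}}\, dy
= g_{n,s}\, |x|^{-(n-2s)} * \(|y|^{-\theta(q_\ep)(q_\ep+1)}\, F_\ep^{q_\ep}(y,0)\),
\]
after identifying the exponent $-(n+2s)+(n-2s)q_{\ep}$ with $-\theta(q_{\ep})(q_{\ep}+1)$ via \eqref{eq-theta}, at least in the limiting regime $q_{\ep}\to q_0$. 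The analogous representation holds for $\whf_{\ep}$ in terms of $G_{\ep}^{p}$ on the right-hand side. Then, using \eqref{eq-bound-1}, I would replace $F_{\ep}$ and $G_{\ep}$ on the right by $\whf_{\ep}+C$ and $\whg_{\ep}+C$ respectively, so that the powers $F_\ep^{q_\ep}$ and $G_\ep^p$ are controlled by $C(\whf_\ep^{q_\ep}+1)$ and $C(\whg_\ep^{p}+1)$.

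Next I would carry out the crucial \emph{linearization} step: write $\whf_\ep^{q_\ep} \le C\,\whf_\ep^{q_0-1}\cdot\whf_\ep + (\text{lower-order})$ and $\whg_\ep^{p} \le C\,\whg_\ep^{p-1}\cdot\whg_\ep$, absorbing the discrepancy between $q_\ep$ and $q_0$ into the constant $C$ (this is where $\lambda_\ep^\ep\to1$ from Lemma \ref{lem-a-2}, or just uniform $L^\infty$ bounds on the rescaled quantities away from nothing, keeps the coefficients bounded — since $\whf_\ep,\whg_\ep$ are bounded on $B_+^{n+1}(0,r)\setminus\{0\}$, all these powers are comparable up to constants). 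After this substitution the bound for $\whf_\ep$ becomes $\whf_\ep(x,0)\le C\,(L_\ep\whg_\ep(x,0)+1)$ with the operator $L_\ep$ as defined, and the bound for $\whg_\ep$ becomes $\whg_\ep(x,0)\le C\,(K_\ep\whf_\ep(x,0)+1)$ with $K_\ep$ as defined. Substituting the first inequality into the second, using linearity of $K_\ep$ and the elementary estimate $K_\ep(1)\le C$ on the bounded region $B^n(0,r)$ (the kernel $|x|^{-\theta(q_0)(q_0+1)}$ has the right integrability there because $\theta(q_0)(q_0+1) = -\theta(p)(p+1) < 0$, i.e. it is actually a bounded weight, so $K_\ep$ applied to a bounded function is bounded), I arrive at
\[
\whg_\ep(x,0)\le C\(K_\ep L_\ep \whg_\ep(x,0)+K_\ep(1)+1\)\le C\(K_\ep L_\ep \whg_\ep(x,0)+1\),
\]
which is exactly \eqref{eq-bound-3}.

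The main obstacle I anticipate is handling the interplay between the $\ep$-dependent exponents and the weights: one must check that $-(n+2s)+(n-2s)q_{\ep}$ stays in the range where the convolution kernel against $|x-y|^{-(n-2s)}$ produces a bounded function on $B^n(0,r)$, and that the linearization constants do not blow up as $\ep\to0$. Since $\theta(q_0)=-\theta(p)$ with $\theta(p)>0$ by \eqref{eq-theta}, the relevant weight exponent is negative, so $|x|^{-\theta(q_0)(q_0+1)} = |x|^{\theta(p)(p+1)}$ is in fact bounded on $B^n(0,r)$; thus $K_\ep$ maps bounded functions to bounded functions and the "$+1$" term is harmless. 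The other subtlety is that $\kappa(\Omega_\ep)$ varies with $\ep$, but since all estimates are performed on the fixed ball $B^n(0,r)$ and only the \emph{restriction} of the solution to $B^n(0,r)\cap\kappa(\Omega_\ep)$ enters, the uniformity in $\ep$ from \eqref{eq-bound-1} and Lemma \ref{lem-maximum} suffices. A final routine point is justifying that the Green's representation \eqref{eq-green-rep} is applicable here with $f = |x|^{-(n+2s)+(n-2s)q_\ep}F_\ep^{q_\ep}(x,0)\chi_{\kappa(\Omega_\ep)}$, which holds once one observes this datum is bounded on $B^n(0,r)$ — again using that the weight is a bounded function and $F_\ep$ is uniformly bounded on the relevant set.
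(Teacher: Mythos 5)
Your overall strategy (Green's representation \eqref{eq-green-rep} for \eqref{eq-bound-7}--\eqref{eq-bound-8}, then composing the two resulting integral inequalities) is the same as the paper's, but there are two genuine gaps. First, your intermediate inequality $\whf_{\ep}(x,0)\le C(L_{\ep}\whg_{\ep}(x,0)+1)$ is false in the main regime of interest. The constant error $C\chi_B$ in the Neumann datum for $\whf_{\ep}$ gets convolved against $|x|^{-(n-2s)}\cdot|x|^{-(n+2s)+(n-2s)p}$, and since the weight exponent $-(n+2s)+(n-2s)p$ is \emph{negative} for $p<(n+2s)/(n-2s)$ (it is $-\theta(p)(p+1)$ with $\theta(p)>0$; note your identity $\theta(q_0)(q_0+1)=-\theta(p)(p+1)$ is also wrong — only $\theta(q_0)=-\theta(p)$ holds), this convolution is bounded only when $p>n/(n-2s)$. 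For $p\le n/(n-2s)$ it produces an unbounded function $\eta_p(x)\simeq|x|^{-n+(n-2s)p}$ (or $\log$), which is exactly the source of the trichotomy in \eqref{eq-decay} and cannot be replaced by a constant.

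Second, and independently, the term you call "harmless", $K_{\ep}(1)$, is not obviously bounded: by definition $K_{\ep}(1)=|x|^{-(n-2s)}*\(|x|^{-\theta(q_0)(q_0+1)}\whf_{\ep}^{q_0-1}(x,0)\)$ still contains the factor $\whf_{\ep}^{q_0-1}$, which is precisely the a priori uncontrolled quantity whose blow-up rate the proposition is trying to establish. The only free bound available at this stage is $\whf_{\ep}\le F_{\ep}+C\le C|x|^{-(n-2s)}$, which gives $|x|^{-\theta(q_0)(q_0+1)}\whf_{\ep}^{q_0-1}\le C|x|^{-4s}$, and then the convolution behaves like $|x|^{-2s}$ near the origin — unbounded. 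The paper sidesteps this by arranging the remainder term to be $\eta_p^{q_0}\chi_B$ with \emph{no} factor of $\whf_{\ep}$ left over (splitting according to whether $L_{\ep}\whg_{\ep}$ or $C\eta_p$ dominates $\whf_{\ep}$), and then checking that $|x|^{-\theta(q_0)(q_0+1)}\eta_p^{q_0}=|x|^{2sp}$ is bounded, so its convolution with $|x|^{-(n-2s)}\chi_B$ is bounded. That cancellation between the weight and the precise power of $\eta_p$ is the computation your proposal skips, and without it the final inequality \eqref{eq-bound-3} does not follow.
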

\begin{proof}
If we set $\chi_{B}(x) = \chi_{B^n(0,r)}(x)$, we see from \eqref{eq-green-rep}, \eqref{eq-a-3} and \eqref{eq-bound-7}-\eqref{eq-bound-1} that
\begin{equation}\label{eq-bound-11}
\whf_{\ep}(x,0) \le \chi_B(x) \cdot \left[ |x|^{-(n-2s)} * \left\{ |x|^{-(n+2s)+(n-2s)p} \(\whg_{\ep}^p(x,0) + C\chi_B(x)\) \right\} \right]
\end{equation}
and
\begin{equation}\label{eq-bound-12}
\whg_{\ep}(x,0) \le \chi_B(x) \cdot \left[ |x|^{-(n-2s)} * \left\{ |x|^{-(n+2s)+(n-2s)q_0} \(\whf_{\ep}^{q_0}(x,0) + C\chi_B(x)\) \right\} \right]
\end{equation}
for every $x \in B^n(0,r) \cap \kappa(\Omega_{\ep})$. In addition, after some elementary computations using the fact that $q_0 > n/(n-2s)$ for small $\ep > 0$, we obtain
\begin{equation}\label{eq-bound-2}
|x|^{-(n-2s)} * \(|x|^{-(n+2s)+(n-2s)p} \chi_B(x)\) \le C \eta_p(x)
\end{equation}
and
\begin{equation}\label{eq-bound-9}
|x|^{-(n-2s)} * \(|x|^{-(n+2s)+(n-2s)q_0} \chi_B(x)\) \le C
\end{equation}
where
\[\eta_p(x) := \begin{cases}
1 &\text{for } \frac{n}{n-2s} < p < \frac{n+2s}{n-2s},
\\
\log|x| &\text{for } p = \frac{n}{n-2s},
\\
|x|^{-n+(n-2s)p} &\text{for } \frac{2s}{n-2s} < p < \frac{n}{n-2s}
\end{cases}\]
for $x \in \R^n \setminus \{0\}$. Using these computations, we infer from \eqref{eq-bound-11} and \eqref{eq-bound-12} that
\[\whf_{\ep}(x,0) \le \chi_B(x) \cdot \(L_{\ep} \whg_{\ep}(x,0) + C \eta_p (x)\) \quad \text{and} \quad \whg_{\ep}(x,0) \le \chi_B(x) \cdot \(K_{\ep} \whf_{\ep}(x,0) + C\).\]
Consequently,
\begin{align*}
\whg_{\ep}(x,0)
&\le C |x|^{-(n-2s)} * \left[ |x|^{-\theta(q_0)(q_0+1)} \whf_{\ep}^{q_0}(x,0) \right] + C
\\
&\le C |x|^{-(n-2s)} * \left[ |x|^{-\theta(q_0)(q_0+1)} \( \whf_{\ep}^{q_0-1}(x,0) L_{\ep}\whg_{\ep}(x) + \eta_p^{q_0}(x) \chi_B(x) \) \right] + C
\\
&\le C \(K_{\ep} L_{\ep} \whg_{\ep}(x,0) + 1\)
\end{align*}
for any $x \in B^n(0,r) \cap \kappa (\Omega_{\ep})$. The lemma is proved.
\end{proof}

For the sake of simplicity, we denote $L^d = L^d(B^n(0,r) \cap \kappa (\Omega_{\ep}))$ for each $d \in [1, \infty)$ and $r \in (0,\infty)$ in the below.
\begin{lem}\label{eq-AB}
We consider the functions
\[A_{\ep}(x) := |x|^{-\theta(q_0)(q_0-1)} \whf_{\ep}^{q_0-1}(x,0)
\quad \text{and} \quad
B_{\ep}(x) := |x|^{-\theta(p)(p-1)} \whg_{\ep}^{p-1}(x,0)\]
where $x \in B^n(0,r) \cap \kappa (\Omega_{\ep})$. These functions satisfy
\begin{equation}\label{eq-A-ep}
\sup_{\ep > 0} \( \|A_{\ep}\|_{L^{\frac{q_0 +1}{q_0 -1}}} + \|B_{\ep}\|_{L^{\frac{p+1}{p-1}}} \) < \infty.
\end{equation}
In addition,  for any $\delta >0$, we can choose $r > 0$ so small that
\begin{equation}\label{eq-A-ep-d}
\sup_{\ep > 0} \|A_{\ep}\|_{L^{\frac{q_0 +1}{q_0 -1}}} < \delta.
\end{equation}
\end{lem}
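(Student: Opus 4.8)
The plan is to rewrite both norms as weighted integrals of $\whf_{\ep}(\cdot,0)$ and $\whg_{\ep}(\cdot,0)$ over $B^n(0,r)\cap\kappa(\Omega_{\ep})$, replace these by the Kelvin transforms $F_{\ep}(\cdot,0)$ and $G_{\ep}(\cdot,0)$ through \eqref{eq-bound-1}, undo the inversion by the substitution $y=x/|x|^2$, observe that the weights then cancel identically, and finally appeal to the compactness results of Section \ref{sec-4}. Raising the norms to the appropriate powers gives
\[
\|A_{\ep}\|_{L^{\frac{q_0+1}{q_0-1}}}^{\frac{q_0+1}{q_0-1}}
=\int_{B^n(0,r)\cap\kappa(\Omega_{\ep})}|x|^{-\theta(q_0)(q_0+1)}\whf_{\ep}^{q_0+1}(x,0)\,dx ,
\]
and the analogue for $B_{\ep}$ with $(q_0,\whf_{\ep})$ replaced by $(p,\whg_{\ep})$. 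Since \eqref{eq-bound-1} yields $\whf_{\ep}(x,0)\le F_{\ep}(x,0)+C$ and $\whg_{\ep}(x,0)\le G_{\ep}(x,0)+C$ on this set, it suffices, up to the remainder integrals $\int_{B^n(0,r)}|x|^{-\theta(q_0)(q_0+1)}dx$ and $\int_{B^n(0,r)}|x|^{-\theta(p)(p+1)}dx$, to estimate $\int|x|^{-\theta(q_0)(q_0+1)}F_{\ep}^{q_0+1}(x,0)\,dx$ and $\int|x|^{-\theta(p)(p+1)}G_{\ep}^{p+1}(x,0)\,dx$. The first remainder is $O(r^{\,n+\theta(p)(q_0+1)})$ because $\theta(q_0)=-\theta(p)<0$ by \eqref{eq-theta}, and the second is finite precisely because the hypothesis $p>2s/(n-2s)$ makes $-\theta(p)(p+1)=(n-2s)p-(n+2s)>-n$.

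For the two main integrals I will use $F_{\ep}(x,0)=|x|^{-(n-2s)}\tu_{\ep}(x/|x|^2)$ and $G_{\ep}(x,0)=|x|^{-(n-2s)}\tv_{\ep}(x/|x|^2)$ from \eqref{eq-bound-6}. Substituting $y=x/|x|^2$, so that $dx=|y|^{-2n}dy$, the region $\{|x|<r\}$ becomes $\{|y|>1/r\}$ and $x\in\kappa(\Omega_{\ep})$ becomes $y\in\Omega_{\ep}$, and using $\theta(q_0)(q_0+1)=(n+2s)-(n-2s)q_0$ and $\theta(p)(p+1)=(n+2s)-(n-2s)p$, one checks that the total power of $|y|$ vanishes in each case, so the integrals reduce to
\[
\int_{\{|y|>1/r\}}\tu_{\ep}^{q_0+1}(y)\,dy
\qquad\text{and}\qquad
\int_{\{|y|>1/r\}}\tv_{\ep}^{p+1}(y)\,dy
\]
(recall $\tu_{\ep}$ and $\tv_{\ep}$ vanish off $\Omega_{\ep}$). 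It then remains to bound these two quantities uniformly in $\ep$, and to make the first one small when $r$ is small.

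For the $\tv_{\ep}$-integral I will avoid estimating $\|\tv_{\ep}\|_{L^{p+1}(\R^n)}$ via $\tv_{\ep}\le(-\Delta)^{-s}\tw_{\ep}$ and the Hardy--Littlewood--Sobolev inequality, since a uniform bound on $\|\tw_{\ep}\|_{L^{(q_0+1)/q_0}(\R^n)}$ is not available. Instead, testing the first two equations of \eqref{eq-b-20} against $\tu_{\ep}$ and $\tv_{\ep}$ and using the self-adjointness of $(-\Delta)^s$ on $\Omega_{\ep}$ exactly as in \eqref{eq-a-75} gives $\int_{\Omega_{\ep}}\tv_{\ep}^{p+1}=\int_{\Omega_{\ep}}\tu_{\ep}^{q_{\ep}+1}=\int_{\Omega_{\ep}}\tw_{\ep}^{(q_{\ep}+1)/q_{\ep}}$, which is uniformly bounded by \eqref{eq-tw-con}; this proves \eqref{eq-A-ep} for $B_{\ep}$. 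For the $\tu_{\ep}$-integral I will use Lemma \ref{lem-tutv-2}: since $\tu_{\ep}\to\mcu=W^{1/q_0}$ in $L^{q_0+1}(\R^n)$ and $\mcu\in L^{q_0+1}(\R^n)$ because $W\in L^{(q_0+1)/q_0}(\R^n)$ by Lemma \ref{lem-a-4}, the family $\{\tu_{\ep}^{q_0+1}\}$ converges in $L^1(\R^n)$, hence is bounded and uniformly tight at infinity, i.e. $\sup_{\ep}\int_{\{|y|>R\}}\tu_{\ep}^{q_0+1}\to0$ as $R\to\infty$. The boundedness proves \eqref{eq-A-ep} for $A_{\ep}$, while combining the tightness with the remainder bound above and choosing $r$ small (so that $R=1/r$ is large) gives \eqref{eq-A-ep-d}.

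The computations are short, so there is no serious obstacle; the only delicate points are the choice to control the $\tv_{\ep}$-integral through the energy identity $\int_{\Omega_{\ep}}\tv_{\ep}^{p+1}=\int_{\Omega_{\ep}}\tu_{\ep}^{q_{\ep}+1}$ together with \eqref{eq-tw-con} rather than a direct convolution estimate, and keeping the signs straight — it is $\theta(q_0)=-\theta(p)$ that makes the $A_{\ep}$-remainder small and the $B_{\ep}$-remainder merely finite, the latter finiteness being nothing other than the hypothesis $p>2s/(n-2s)$.
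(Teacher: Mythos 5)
Your proposal is correct and follows essentially the same route as the paper: rewrite the norms as weighted integrals, use \eqref{eq-bound-1} to pass to $F_{\ep}, G_{\ep}$, undo the Kelvin transform so the weights cancel and the integrals become $\int_{\{|y|>1/r\}}\tu_{\ep}^{q_0+1}$ and $\int_{\{|y|>1/r\}}\tv_{\ep}^{p+1}$, then bound these via the energy identity together with \eqref{eq-tw-con} and via Lemma \ref{lem-tutv-2}. The only (cosmetic) differences are that the paper obtains the $\tv_{\ep}$-bound by rescaling the unrescaled identity \eqref{eq-a-75} and gets boundedness of $\int\tu_{\ep}^{q_0+1}$ from $\tu_{\ep}\le 1$ and $q_0>q_{\ep}$, whereas you work directly with \eqref{eq-b-20} and the $L^{q_0+1}$ convergence.
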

\begin{proof}
By \eqref{eq-tuv}, \eqref{eq-a-75} and \eqref{eq-a-81}, we can choose a constant $C > 0$ independent of $\ep > 0$ such that
\[\|\tv_{\ep}\|_{L^{p+1}(\Omega_{\ep})} = \lambda_{\ep}^{\frac{n}{p+1} - \frac{2s(q_{\ep}+1)}{pq_{\ep}-1}} \|v_{\ep}\|_{L^{p+1}(\Omega)} \le \|v_{\ep}\|_{L^{p+1}(\Omega)} = \|u_{\ep}\|_{L^{q_{\ep}+1}(\Omega)}^{q_{\ep}+1 \over p+1} \le C.\]
Using this with \eqref{eq-tw-con}, we deduce that
\begin{equation}\label{eq-tuv-2}
\sup_{\ep > 0} \(\|\tu_{\ep}\|_{L^{q_{\ep}+1}(\Omega_{\ep})} + \|\tv_{\ep}\|_{L^{p+1}(\Omega_{\ep})}\) < \infty.
\end{equation}
Besides we have
\begin{equation}\label{eq-b-10}
\begin{aligned}
\sup_{\ep > 0} \|A_{\ep}\|_{L^{\frac{q_0+1}{q_0-1}}}^{\frac{q_0+1}{q_0-1}}
& = \sup_{\ep >0}\ \int_{B^n(0,r) \cap \kappa (\Omega_{\ep})} |x|^{-\theta(q_0)(q_0+1)} \whf_{\ep}^{q_0+1}(x,0) dx
\\
&\underset{\eqref{eq-bound-1}}{\le} \sup_{\ep >0}\int_{B^n(0,r) \cap \kappa (\Omega_{\ep})} |x|^{-\theta(q_0)(q_0+1)} F_{\ep}^{q_0+1}(x,0) dx + C r^{n-\theta(q_0)(q_0+1)}
\\
&\underset{\eqref{eq-bound-6}}{=} \sup_{\ep >0} \int_{B^n(0,1/r)^c \cap \Omega_{\ep}} \tu_{\ep}^{q_0+1} (x) dx + Cr^{n-\theta(q_0)(q_0+1)}.
\end{aligned}
\end{equation}
It holds that $q_0 > q_{\ep}$ and $\tu_{\ep} \le 1$ in $\Omega_{\ep}$.
Accordingly, from \eqref{eq-tuv-2} and the relation $\theta(q_0)(q_0+1) < n$, we get that $A_{\ep}$ is uniformly bounded in $L^{(q_0+1)/(q_0-1)}$.
In a similar way, we can check that $\{B_{\ep}\}_{\ep>0}$ is uniformly bounded in $L^{(p+1)/(p-1)}$, obtaining \eqref{eq-A-ep}.

On the other hand, \eqref{eq-A-ep-d} comes from Lemma \ref{lem-tutv-2} and \eqref{eq-b-10}. The proof is finished.
\end{proof}

We now recall the \textit{doubly weighted Hardy-Littlewood-Sobolev inequality} which was derived by Stein and Weiss \cite{SW}.
\begin{equation}\label{eq-dhls}
\left| \int_{\R^n} \int_{\R^n} \frac{f(x)g(y)}{|x|^{\alpha} |x-y|^{\lambda} |y|^{\beta}} dx dy \right| \le C_{\alpha, \beta, r_0, \lambda, n} \|f\|_{r_0} \|g\|_{r_1}
\end{equation}
where $1 < r_0, r_1 <\infty$, $0 < \lambda < n$, $\alpha + \beta \ge 0$ and
\[1-\frac{1}{r_0} - \frac{\lambda}{n} < \frac{\alpha}{n} < 1-\frac{1}{r_0}
\quad \text{and} \quad
\frac{1}{r_0} + \frac{1}{r_1} + \frac{\lambda + \alpha +\beta}{n} =2.\]
We are ready to give the proof of Proposition \ref{lem-gu}.
\begin{proof}[Proof of Proposition \ref{lem-gu}]
By utilizing the Hardy-Littlewood-Sobolev inequality \eqref{eq-hls}, its weighted version \eqref{eq-dhls}, H\"older's inequality and Lemma \ref{lem-a-2}, we find that for any sufficiently large $d_1 > 1$ and small $\ep > 0$,
\begin{equation}\label{eq-bound-4}
\begin{aligned}
\| K_{\ep} L_{\ep}\whg_{\ep}(\cdot,0) \|_{L^{d_1}} &= \left\| |x|^{-(n-2s)} * \( |x|^{-2\theta(q_0)} A_{\ep}(x) L_{\ep} \whg_{\ep}(x,0) \) \right\|_{L^{d_1}}\\
&\le C \left\| |\cdot|^{2\theta(p)} A_{\ep} L_{\ep} \whg_{\ep}(\cdot,0) \right\|_{L^{d_2}} \quad \text{($\theta(p) = - \theta(q_0)$ by \eqref{eq-theta})}
\\
&\le C \|A_{\ep}\|_{L^{\frac{q_0+1}{q_0-1}}} \left\| |\cdot|^{2\theta(p)} L_{\ep} \whg_{\ep}(\cdot,0) \right\|_{L^{d_3}}
\\
&\le C \|A_{\ep}\|_{L^{\frac{q_0+1}{q_0-1}}} \left\| |\cdot|^{2\theta(p)} \left[ |x|^{-(n-2s)} * \( |x|^{-2\theta(p)} B_{\ep}(x) \whg_{\ep}(x,0)\)\right] \right\|_{L^{d_3}}
\\
&\le C \|A_{\ep}\|_{L^{\frac{q_0+1}{q_0-1}}} \|B_{\ep} \whg_{\ep}(\cdot,0)\|_{L^{d_4}}
\\
&\le C \|A_{\ep}\|_{L^{\frac{q_0+1}{q_0-1}}} \|B_{\ep} \|_{L^{\frac{p+1}{p-1}}} \|\whg_{\ep}(\cdot,0)\|_{L^{d_5}}
:= C_{\ep} \|\whg_{\ep}(\cdot,0)\|_{L^{d_5}}
\end{aligned}
\end{equation}
where $d_2, d_3, d_4, d_5 > 1$ satisfy $2\theta(p) < n(1-1/d_4)$,
\[\frac{1}{d_2} - \frac{1}{d_1} = \frac{1}{d_4} - \frac{1}{d_3} = {2s \over n},
\quad \frac{1}{d_2} - \frac{1}{d_3} = \frac{q_0-1}{q_0+1}
\quad \text{and} \quad
\frac{1}{d_4} - \frac{1}{d_5} = \frac{p-1}{p+1},\]
which implies $d_5 = d_1$.
It is worth to point out that the condition $2s/(n-2s) < p < (n+2s)/(n-2s)$ allows us to find suitable parameters $\alpha, \beta, r_0, r_1$ to apply \eqref{eq-dhls}, provided that $d_1 > 1$ large enough.

By Lemma \ref{eq-AB}, it is possible to take $r > 0$ sufficiently small so that $\sup_{\ep > 0} C_{\ep}$ is as small as we want.
Hence \eqref{eq-bound-3} and \eqref{eq-bound-4} show
\[\| \whg_{\ep}(\cdot,0) \|_{L^d} \le C \( \| K_{\ep} L_{\ep} \whg_{\ep}(\cdot,0) \|_{L^d} + 1 \) \le \frac{1}{2} \| \whg_{\ep}(\cdot,0) \|_{L^d} + C\]
for any large $d > 1$.
Since it holds that $\whg_{\ep} \in L^{\infty}(B^n(0,r) \cap \kappa (\Omega_{\ep}))$ for each $\ep > 0$, the above inequalities imply that
\begin{equation}\label{eq-whg-l}
\| \whg_{\ep}(\cdot, 0) \|_{L^d} \le C \quad \text{for all } d \in [1,\infty).
\end{equation}

By making use of \eqref{eq-whg-l}, finally, we can estimate the blow-up rates of $\whf_{\ep}$ and $\whg_{\ep}$ near the origin, for which we divide the cases according to the value of $p$.
In the below, we assume that $x \in B^n(0,r) \cap \kappa (\Omega_{\ep})$.

\medskip \noindent \textsc{Case 1.} Suppose that $p \in (n/(n-2s), (n+2s)/(n-2s))$.
By \eqref{eq-bound-11} and \eqref{eq-bound-2}, we have
\begin{equation}\label{eq-bound-13}
\whf_{\ep}(x,0) \le |x|^{-(n-2s)} * \(|x|^{-(n+2s)+(n-2s)p} \whg_{\ep}^p(x,0)\) + C.
\end{equation}
Applying \eqref{eq-whg-l} with sufficiently large $d \ge 1$, we deduce that the $L^{\infty}$-norm of $\whf_{\ep}$ is uniformly bounded in $\ep$. Since we know
\begin{equation}\label{eq-bound-14}
\whg_{\ep}(x,0) \le |x|^{-(n-2s)} * \(|x|^{-(n+2s)+(n-2s)q_0} \whf_{\ep}^{q_0}(x,0)\) + C
\end{equation}
from \eqref{eq-bound-12} and \eqref{eq-bound-9}, the $L^{\infty}$-norm of $\whg_{\ep}$ is also uniformly bounded.

\medskip \noindent \textsc{Case 2.} Suppose that $p \in (2s/(n-2s), n/(n-2s))$. By \eqref{eq-whg-l} and \eqref{eq-bound-13}, it holds that
\[\whf_{\ep}(x,0) \le C |x|^{-n + (n-2s)p - \delta},\]
for any given $\delta > 0$. Using this and \eqref{eq-bound-14}, we discover
\[\whg_{\ep}(x,0) \le C |x|^{-(n-2s)} * \left[ |x|^{-(n+2s) + (n-2s)q_0} |x|^{(-n+(n-2s)p-\delta)q_0} \chi_B(x) \right] + C \le C.\]
We insert the above estimate into \eqref{eq-bound-13} again, getting
\[\whf_{\ep}(x,0) \le C |x|^{-n + (n-2s)p}.\]

\medskip \noindent \textsc{Case 3.} Suppose that $p = n/(n-2s)$. A similar argument to in Step 2 shows
\[\whf_{\ep}(x,0) \le - C \log |x| \quad \text{and} \quad \whg_{\ep}(x,0) \le C.\]
The proof is finished.
\end{proof}

We conclude this subsection by observing the limit behavior of $(\tu_{\ep}, \tv_{\ep})$ as $\ep \to 0$.
\begin{cor}\label{cor-tutv-1}
There exists a function $\mcv \in L^{p+1}(\R^n)$ such that $\tv_{\ep} \rightharpoonup \mcv$ weakly in $L^{p+1}(\R^n)$ up to a subsequence.
Furthermore, if $\mcu$ is the pointwise limit of $\{\tu_{\ep}\}_{\ep > 0}$ (see Lemma \ref{lem-tutv-2}), then $(\mcu, \mcv)$ is a solution of
\begin{equation}\label{eq-lim}
\begin{cases}
\mcu(x) = (-\Delta)^{-s} \mcv^p(x) = g_{n,s} \int_{\R^n} |x-y|^{-(n-2s)} \mcv^p(y) dy &\text{for } x \in \R^n,
\\
\mcv(x) = (-\Delta)^{-s} \mcu^{q_0}(x) = g_{n,s} \int_{\R^n} |x-y|^{-(n-2s)} \mcu^{q_0}(y) dy &\text{for } x \in \R^n,
\\
\mcu,\, \mcv > 0 &\text{in } \R^n.
\end{cases}
\end{equation}
\end{cor}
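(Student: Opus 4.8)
\textbf{Proof strategy for Corollary \ref{cor-tutv-1}.}
The plan is to combine the uniform pointwise decay estimate from Proposition \ref{lem-gu} with the already-established convergence facts for $\{\tu_\ep\}$ and $\{\tw_\ep\}$ in order to pass to the limit in the integral form \eqref{eq-b-20} of the rescaled system. First I would record that $\{\tv_\ep\}$ is bounded in $L^{p+1}(\R^n)$ by \eqref{eq-tuv-2} (equivalently \eqref{eq-b-10}), so that up to a subsequence $\tv_\ep \rightharpoonup \mcv$ weakly in $L^{p+1}(\R^n)$ for some nonnegative $\mcv$; here one uses that $\Omega_\ep \uparrow \R^n$, extending each $\tv_\ep$ by zero off $\Omega_\ep$. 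From Lemma \ref{lem-tutv-2} we already have $\tu_\ep \to \mcu$ strongly in $L^{q_0+1}(\R^n)$ (and $\tw_\ep = \tu_\ep^{q_\ep} \to W = \mcu^{q_0}$ in $L^{(q_0+1)/q_0}$).

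The heart of the argument is to pass to the limit in the two integral identities
\[
\tv_\ep(x) = g_{n,s}\int_{\Omega_\ep} |x-y|^{-(n-2s)}\, \tu_\ep^{q_\ep}(y)\, dy,
\qquad
\tu_\ep(x) = g_{n,s}\int_{\Omega_\ep} G_{\mcc_\ep}(x,y)\, \tv_\ep^{p}(y)\, dy,
\]
which follow from Green's representation \eqref{eq-2-29}–\eqref{eq-rep} applied to \eqref{eq-b-20}. For the first identity I would fix $x$ and split the $y$-integral into a neighborhood of $x$ and its complement: on the complement the kernel is bounded and the $L^{(q_0+1)/q_0}$-convergence of $\tu_\ep^{q_\ep}$ to $\mcu^{q_0}$ passes the limit, while near $x$ one uses the uniform bound $\tu_\ep \le 1$ together with the integrability of $|x-y|^{-(n-2s)}$ to control the contribution and invoke the Hardy–Littlewood–Sobolev inequality \eqref{eq-hls} for equi-integrability; this yields $\mcu(x) = g_{n,s}\int_{\R^n}|x-y|^{-(n-2s)}\mcu^{q_0}(y)\,dy$ pointwise. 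For the second identity the rescaled Green's function $G_{\mcc_\ep}((x,0),\cdot)$ increases monotonically (it is squeezed between $0$ and $g_{n,s}|x-\cdot|^{-(n-2s)}$ by \eqref{eq-G}, and converges to the whole-space kernel as $\Omega_\ep \uparrow \R^n$); combined with the pointwise decay $\tv_\ep \le C\wtmcv \simeq |\cdot|^{-(n-2s)}$ from Proposition \ref{lem-gu}, which gives a uniform integrable majorant for $\tv_\ep^p$ against the kernel, I would use the dominated convergence theorem together with the weak $L^{p+1}$ convergence $\tv_\ep \rightharpoonup \mcv$ (testing against the fixed $L^{(p+1)/p}$ function $y \mapsto g_{n,s}|x-y|^{-(n-2s)}\chi_{\{|y|\le \text{large}\}}$, handling the far field by the decay estimate) to obtain $\mcu(x) = g_{n,s}\int_{\R^n}|x-y|^{-(n-2s)}\mcv^p(y)\,dy$. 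A Fatou/monotonicity argument then also identifies $\mcv$ as $(-\Delta)^{-s}\mcu^{q_0}$ on all of $\R^n$, upgrading the weak limit.

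Finally, positivity of $\mcu$ and $\mcv$ in $\R^n$ follows because $W$ is nontrivial and nonnegative (Lemma \ref{lem-a-4}, Lemma \ref{lem-a-2}), hence $\mcu = W^{1/q_0} \not\equiv 0$, and then the strictly positive kernel $|x-y|^{-(n-2s)}$ forces $\mcv = (-\Delta)^{-s}\mcu^{q_0} > 0$ everywhere and, in turn, $\mcu = (-\Delta)^{-s}\mcv^p > 0$ everywhere. The main obstacle I anticipate is justifying the limit in the $\tv_\ep$-equation: one must simultaneously cope with the moving domain $\Omega_\ep$, the moving Green's function $G_{\mcc_\ep}$, and the fact that $\tv_\ep$ is only known to converge weakly; the decay estimate of Proposition \ref{lem-gu} is exactly what supplies the uniform tail control needed to trade this weak convergence for an honest pointwise limit, so the argument hinges on deploying that estimate carefully rather than on any new idea.
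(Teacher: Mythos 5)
Your handling of the weak limit $\mcv$ and of the second equation $\mcv=(-\Delta)^{-s}\mcu^{q_0}$ coincides with the paper's: one writes $\tv_{\ep}$ through the rescaled Green's function, splits off the regular part as in \eqref{eq-b-2}, observes that this contribution is $O(\lambda_{\ep}^{-(n-2s)})$ by the boundedness of $H$ together with Proposition \ref{lem-gu}, and passes to the limit in the free-kernel term using the pointwise convergence and decay of $\tu_{\ep}^{q_{\ep}}$. (Your first displayed identity drops the $H$-correction and your second inserts a spurious $g_{n,s}$ in front of the Green's function, but these are cosmetic.) For the first equation $\mcu=(-\Delta)^{-s}\mcv^p$ you take a genuinely different route: the paper never passes to the limit in the $\tu_{\ep}$-equation. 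Instead it uses that $W=\mcu^{q_0}$ is a minimizer of \eqref{eq-sob} (Lemma \ref{lem-a-2}), writes the Euler--Lagrange equation $(-\Delta)^{-s}((-\Delta)^{-s}W)^p=\mu W^{1/q_0}$ as in Lemma \ref{lem-3-1}, identifies $\mu=1$ from the equality case of \eqref{eq-a-17}, and reads off the first equation with no limiting argument at all. That device sidesteps exactly the obstacle you single out as the hard point.

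Your direct route can be completed, but as written it has a soft spot. Weak convergence $\tv_{\ep}\rightharpoonup\mcv$ in $L^{p+1}$ does not let you pass to the limit in $\int G_{\ep}(x,y)\tv_{\ep}^p(y)\,dy$: the integrand involves the nonlinear quantity $\tv_{\ep}^p$, testing the weak limit against a fixed dual function only controls linear functionals of $\tv_{\ep}$, and dominated convergence needs a.e.\ convergence, which weak convergence does not provide. What closes the argument is the strong local convergence $\tv_{\ep}\to\mcv$ in $C^{\alpha}_{\textnormal{loc}}$ coming from elliptic regularity (the paper records this explicitly at the start of its proof); combined with the majorant $\tv_{\ep}^p\le C\wtmcv^p\simeq|y|^{-(n-2s)p}$ of Proposition \ref{lem-gu}, which is integrable against $|x-y|^{-(n-2s)}$ at infinity precisely because $p>2s/(n-2s)$, dominated convergence then applies; one must also check that the regular-part term $\lambda_{\ep}^{-(n-2s)}\int_{\Omega_{\ep}}H\,\tv_{\ep}^p$ vanishes even when $p<n/(n-2s)$, where $\|\tv_{\ep}^p\|_{L^1(\Omega_{\ep})}$ may grow like $\lambda_{\ep}^{n-(n-2s)p}$ but is still dominated by $\lambda_{\ep}^{-(n-2s)}$. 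Separately, your assertion that the rescaled Green's functions ``increase monotonically'' is both unjustified (the domains $\Omega_{\ep}$ need not be nested) and unnecessary; the squeeze \eqref{eq-G} plus $\lambda_{\ep}^{-(n-2s)}H\to 0$ is all that is used. The positivity argument at the end agrees with the paper's.
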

\begin{proof}
By \eqref{eq-tuv-2}, $\tv_{\ep} \to \mcv$ weakly in $L^{p+1}(\R^n)$ for some function $\mcv$.
In fact, $\tv_{\ep} \to \mcv$ in $C^{\alpha}(\R^n)$ for some $\alpha \in (0,2)$ thanks to elliptic regularity.
We apply \eqref{eq-b-20} and \eqref{eq-green-decom} to find
\begin{equation}\label{eq-b-2}
\tv_{\ep}(x) = \int_{\Omega_{\ep}} \frac{g_{n,s}}{|x-y|^{n-2s}} \tu_{\ep}^{q_{\ep}}(y) dy
- \lambda_{\ep}^{-(n-2s)} \int_{\Omega_{\ep}} H(\lambda_{\ep}^{-1}x + x_{\ep}, \lambda_{\ep}^{-1}y + x_{\ep})\, \tu_{\ep}^{q_{\ep}}(y) dy
\end{equation}
for any $x \in \Omega_{\ep}$ (cf. Lemma \ref{lem-a-82}).
Then the boundedness of $H$ and Proposition \ref{lem-gu} imply that
the second term of the right-hand side is bounded by $C_x \lambda_{\ep}^{-(n-2s)}$ for each $x \in \Omega$ and a constant $C_x$ depending on $x$.
Hence its limit $\mcv$ satisfies the second equation in \eqref{eq-lim} in view of Lemma \ref{lem-lam}.

Meanwhile, since $W = \mcu^{q_0}$ is a minimizer of the Hardy-Littlewood-Sobolev inequality \eqref{eq-hls}, we obtain
\[(-\Delta)^{-s} ((-\Delta)^{-s} W)^p = \mu W^{1 \over q_0} \quad \text{in } \R^n\]
for some $\mu \in \R$, reasoning as in the proof of Lemma \ref{lem-3-1}.
Moreover \eqref{eq-a-17} gives that $\mu = 1$. Thus the above equation reads as
\[(-\Delta)^{-s} \mcv^p = (-\Delta)^{-s} ((-\Delta)^{-s} \mcu^{q_0})^p = \mcu \quad \text{in } \R^n,\]
which is the first equation in \eqref{eq-lim}.

From Lemma \ref{lem-a-2}, we know that the nonnegative function $\mcu$ is nontrivial.
In light of the first and second equations in \eqref{eq-lim}, $\mcu$ and $\mcv$ should be positive. The lemma is proved.
\end{proof}
\begin{rmk}
Chen-Li-Ou \cite{CLO} showed that any solution to \eqref{eq-lim} is radially symmetric with respect to some point, say, the origin.
Also, the above corollary and Proposition \ref{lem-gu} are consistent with the result of Villavert \cite[Theorem 3]{V} and Chen-Li-Ou \cite{CLO2}
where the asymptotic behavior of solutions to \eqref{eq-lim} near $\infty$ is studied provided $1 < p < (n+2s)/(n-2s)$ and $p = 1$, respectively.
\end{rmk}

\section{Pointwise Limit of Rescaled Solutions away from the Singularity} \label{sec-Green}
Using the upper bound of the rescaled solutions $(\tu_{\ep}, \tv_{\ep})$ obtained in Proposition \ref{lem-gu},
we can now show the convergence of minimal energy solutions $(u_{\ep}, v_{\ep})$ to \eqref{eq-main}
to Green's function $G$ or the function $\wtg$ defined in \eqref{eq-tg-1} outside the concentration point $x_0 \in \Omega$.
Let $(\mcu, \mcv)$ be the limit of $(\tu_{\ep}, \tv_{\ep})$ that solves the system \eqref{eq-lim}.

\begin{lem}\label{lem-v}
Let $C_1 = \int_{\R^n} \mcu^{q_0} (x) dx$. Then we have
\[\lim_{\ep \to 0} \lambda_{\ep}^{\frac{n}{q_0+1}} v_{\ep}(x) = C_1 G(x,x_0) \quad \text{in } C^0 (\Omega \setminus \{x_0\}).\]
\end{lem}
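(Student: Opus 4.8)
The plan is to start from the Green's representation of $v_\ep$ on $\Omega$, namely $v_\ep(x) = \int_\Omega G(x,y) u_\ep^{q_\ep}(y)\,dy$, and change variables $y = \lambda_\ep^{-1}z + x_\ep$ to pass to the rescaled solution $\tu_\ep$ on $\Omega_\ep$. Using $w_\ep = u_\ep^{q_\ep}$, $\tw_\ep(z) = \lambda_\ep^{-\alpha_\ep q_\ep}w_\ep(\lambda_\ep^{-1}z+x_\ep)$ and the scaling exponents $\alpha_\ep,\beta_\ep$ from \eqref{eq-ab}, one checks that $(q_\ep+1)\alpha_\ep - n = \frac{n^2}{2s}\ep + O(\ep^2) \to 0$, so that the natural normalization is by $\lambda_\ep^{n/(q_0+1)}$. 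The key point is that $\lambda_\ep^{n/(q_0+1)}v_\ep(x)$ is, up to lower-order corrections coming from $\lambda_\ep^\ep\to 1$ (see \eqref{eq-a-3}), equal to $g_{n,s}\int_{\Omega_\ep}|\lambda_\ep(x-x_\ep) - z|^{-(n-2s)}\tw_\ep(z)\,dz$ minus a term involving the regular part $H$.

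First I would split $G = G_{\R^{n+1}_+}|_{t=0} - H$ via \eqref{eq-green-decom}, writing $\lambda_\ep^{n/(q_0+1)}v_\ep(x)$ as a sum of a ``singular'' convolution term and an $H$-term. For fixed $x \in \Omega\setminus\{x_0\}$, in the singular term the kernel $|\lambda_\ep(x-x_\ep)-z|^{-(n-2s)}$ evaluated against $\tw_\ep$ concentrates: since $|\lambda_\ep(x-x_\ep)| = \lambda_\ep|x-x_\ep| \to \infty$ (because $x_\ep\to x_0\neq x$ and $\lambda_\ep\to\infty$ by Lemma \ref{lem-lam}) and $\tw_\ep \to W = \mcu^{q_0}$ in $L^{(q_0+1)/q_0}(\R^n)$ by \eqref{eq-tw-c} with a uniform pointwise bound from Proposition \ref{lem-gu}, the kernel is essentially constant $\approx (\lambda_\ep|x-x_\ep|)^{-(n-2s)}$ on the region where $\tw_\ep$ has mass. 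Hence this term is asymptotic to $g_{n,s}(\lambda_\ep|x-x_\ep|)^{-(n-2s)}\int_{\R^n}W = \lambda_\ep^{-(n-2s)}\cdot\frac{g_{n,s}}{|x-x_\ep|^{n-2s}}\cdot C_1$, and after multiplying by the correct power and using $\lambda_\ep^{n/(q_0+1)}\cdot\lambda_\ep^{(q_0+1)\alpha_\ep q_\ep - n}\cdot\lambda_\ep^{-(n-2s)}$ collapses (by the hyperbola relation and $\lambda_\ep^\ep\to1$) to give $C_1\,G_{\R^{n+1}_+}(x,x_\ep)|_{t=0} \to C_1\frac{g_{n,s}}{|x-x_0|^{n-2s}}$. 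Meanwhile the $H$-term, after rescaling, equals $-\lambda_\ep^{n/(q_0+1) - \alpha_\ep q_\ep}\int_\Omega H(x, y) w_\ep(y)\,dy$; by boundedness of $H$ on compact subsets (Subsection \ref{subsec_green}) and the $L^1$ control $\int_\Omega w_\ep^{(q_\ep+1)/q_\ep} \to S_{p,q_0}^{n/2s}$, this converges to $-C_1 H(x,x_0)$. Adding the two contributions gives $C_1(G_{\R^{n+1}_+}(x,x_0)|_{t=0} - H(x,x_0)) = C_1 G(x,x_0)$.

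To upgrade pointwise convergence to $C^0(\Omega\setminus\{x_0\})$ convergence, I would fix any compact $K\subset\Omega\setminus\{x_0\}$, cover it, and observe that on $K$ the functions $\lambda_\ep^{n/(q_0+1)}v_\ep$ solve $(-\Delta)^s(\lambda_\ep^{n/(q_0+1)}v_\ep) = \lambda_\ep^{n/(q_0+1)}u_\ep^{q_\ep}$ with right-hand side that is uniformly small on $K$ (because $u_\ep$ concentrates near $x_0$, by Proposition \ref{lem-gu} and the rescaling); combined with a uniform $L^\infty$ bound near $K$ coming from the representation formula, standard interior elliptic estimates for the fractional Laplacian (as in \cite{CDDS, CaS, CSt}) give equicontinuity, so the pointwise limit is in fact uniform. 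The main obstacle is making the ``kernel is essentially constant over the mass of $\tw_\ep$'' step rigorous with uniform (in $\ep$) control: one must split the $z$-integral into a large ball $|z|\le R$ and its complement, use \eqref{eq-tw-c} plus the decay bound of Proposition \ref{lem-gu} (which guarantees $W\in L^{(q_0+1)/q_0}$ and that the tails of $\tw_\ep$ are uniformly small) on the outside, and on the inside use that $\lambda_\ep|x-x_\ep|\to\infty$ forces $|\lambda_\ep(x-x_\ep) - z|/|\lambda_\ep(x-x_\ep)| \to 1$ uniformly for $|z|\le R$. Tracking the exact powers of $\lambda_\ep$ (and verifying the $\lambda_\ep^\ep\to1$ corrections are harmless) is the bookkeeping part; the concentration estimate is the conceptual core.
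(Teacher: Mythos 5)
Your proposal is correct and follows essentially the same route as the paper: the paper's proof simply observes, via Proposition \ref{lem-gu} and the mass identity \eqref{eq-v-2}, that $\lambda_\ep^{n/(q_0+1)}u_\ep^{q_\ep}\to C_1\delta_{x_0}$ and then pairs this with $G(x,\cdot)$ in the representation formula \eqref{eq-2-29}, which is exactly your concentration argument carried out without the decomposition $G = G_{\R^{n+1}_+}-H$ or the passage to rescaled variables. The only caveat is a bookkeeping slip in the exponent of your $H$-term (in the unrescaled variables it is just $-\lambda_\ep^{n/(q_0+1)}\int_\Omega H(x,y)w_\ep(y)\,dy$), which does not affect the conclusion.
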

\begin{proof}
According to Proposition \ref{lem-gu}, we have
\begin{equation}\label{eq-v-1}
\lim_{\ep \to 0} \lambda_{\ep}^{\frac{n}{q_0+1}} u_{\ep}^{q_\ep}(x) =0 \quad \text{in } C^0(\Omega \setminus \{x_0\}).
\end{equation}
On the other hand, employing Lemma \ref{lem-a-2}, Proposition \ref{lem-gu} and the dominated convergence theorem, we get
\begin{equation}\label{eq-v-2}
\lim_{\ep \to 0} \int_{\Omega} \lambda_{\ep}^{\frac{n}{q_0+1}} u_{\ep}^{q_{\ep}}(x) dx
= \lim_{\ep \to 0} \int_{\Omega_{\ep}} \tu^{q_{\ep}}(x) dx
= \int_{\R^n} \mcu^{q_0}(x) dx,
\end{equation}
Putting \eqref{eq-v-1} and \eqref{eq-v-2} together, we find
\[\lim_{\ep \to 0} \lambda_{\ep}^{\frac{n}{q_0+1}} u_{\ep}^{q_\ep}(x) = C_1 \delta_{x_0} (x).\]
Using this and \eqref{eq-2-29}, we deduce that
\[\lim_{\ep \to 0} \lambda_{\ep}^{\frac{n}{q_0+1}} v_{\ep}(x) = \lim_{\ep \to 0} \int_{\Omega} G(x,y) \lambda_{\ep}^{\frac{n}{q_0+1}} u_{\ep}^{q_{\ep}}(y) dy = C_1 G(x, x_0).\]
in $C^0(\Omega \setminus \{x_0\})$. The proof of the lemma is concluded.
\end{proof}

To treat the function $u_{\ep}$, we need to split the case according to the range of $p$,
because the decay estimate \eqref{eq-decay} alludes that $\mcv^p$ is not integrable in the entire space $\R^n$ if $p \le n/(n-2s)$.
\begin{lem}\label{lem-u}
The followings are true.
\begin{enumerate}
\item Assume that $p \in (n/(n-2s), (n+2s)/(n-2s))$ and let $C_2 = \int_{\R^n} \mcv^p(x) dx$. Then
\[\lim_{\ep \to 0} \lambda_{\ep}^{\frac{n}{p+1}} u_{\ep}(x) = C_2 G(x,x_0) \quad \text{in } C^0(\Omega \setminus \{x_0\}).\]
\item Assume that $p = n/(n-2s)$ and let $C_3 = (g_{n,s} C_1)^{\frac{n}{n-2s}}|\ms^{n-1}|$. Then
\[\lim_{\ep \to 0} \frac{\lambda_{\ep}^{\frac{n}{p+1}}}{\log \lambda_{\ep}} u_{\ep}(x) = C_3 G(x,x_0) \quad \text{in } C^0(\Omega \setminus \{x_0\}).\]
\item Assume that $p \ge 1$ and $p \in (2s/(n-2s), n/(n-2s))$. Then
\[\lim_{\ep \to 0} \lambda_{\ep}^{\frac{np}{q_0+1}} u_{\ep}(x) = C_1^p \wtg(x,x_0) \quad \text{in } C^0(\Omega \setminus \{x_0\}).\]
\end{enumerate}
\end{lem}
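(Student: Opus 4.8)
The plan is to treat the three cases in parallel, starting in each case from the Green's representation $v_\ep = (-\Delta)^{-s} u_\ep^{q_\ep}$, hence
\[
u_\ep(x) = (-\Delta)^{-s} v_\ep^p(x) = \int_\Omega G(x,y)\, v_\ep^p(y)\, dy \quad \text{for } x \in \Omega,
\]
and to rescale the integral around $x_\ep$ using the change of variables $y = \lambda_\ep^{-1} z + x_\ep$, which produces a factor $\lambda_\ep^{-n} \lambda_\ep^{p\beta_\ep}$ together with $\tv_\ep^p(z)$ under the integral. The key input is the pointwise bound $\tv_\ep \le C\wtmcv \simeq |z|^{-(n-2s)}$ from Proposition \ref{lem-gu}, which controls the tails, plus $\tv_\ep \to \mcv$ in $C^0_{\mathrm{loc}}$ from Corollary \ref{cor-tutv-1}.

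\emph{Case (1), $p > n/(n-2s)$.} Here $\mcv^p \in L^1(\R^n)$ because $p(n-2s) > n$, and $p\beta_\ep - n \to p(n-2s) - n - \tfrac{n}{p+1}\cdot(\cdots)$; more precisely one checks $\lambda_\ep^{-n+p\beta_\ep} \to$ the correct power so that $\lambda_\ep^{n/(p+1)} u_\ep^{q_\ep}$-type bookkeeping works out, and the dominated convergence theorem (dominating $\tv_\ep^p$ by $C|z|^{-(n-2s)p}$, which is integrable near infinity since $(n-2s)p > n$, and bounded near the origin) gives
\[
\lim_{\ep \to 0} \lambda_\ep^{\frac{n}{p+1}} u_\ep^{q_\ep}(x) = 0 \quad \text{in } C^0(\Omega \setminus \{x_0\}), \qquad
\lim_{\ep \to 0}\int_\Omega \lambda_\ep^{\frac{n}{p+1}} v_\ep^p(y)\,dy = \int_{\R^n} \mcv^p(y)\,dy = C_2.
\]
Combining these, $\lambda_\ep^{n/(p+1)} v_\ep^p \to C_2 \delta_{x_0}$ as measures, and plugging into the Green's representation together with continuity of $G(\cdot,y)$ away from the diagonal and uniform control of the tail (using $x_\ep \to x_0$, established in Lemma \ref{lem-v}) yields $\lambda_\ep^{n/(p+1)} u_\ep \to C_2 G(\cdot,x_0)$ in $C^0(\Omega\setminus\{x_0\})$. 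This mirrors exactly the argument of Lemma \ref{lem-v}.

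\emph{Case (3), $2s/(n-2s) < p < n/(n-2s)$, $p \ge 1$.} Now $\mcv^p \notin L^1(\R^n)$, so the mass does not concentrate; instead $v_\ep^p$ itself has a nontrivial profile. Write, using $v_\ep = (-\Delta)^{-s} u_\ep^{q_\ep}$ and \eqref{eq-green-decom}, $v_\ep(y) = g_{n,s}\int_\Omega |y-y'|^{-(n-2s)} u_\ep^{q_\ep}(y')\,dy' - \int_\Omega H(y,y') u_\ep^{q_\ep}(y')\,dy'$. From Lemma \ref{lem-v}, $\lambda_\ep^{n/(q_0+1)} u_\ep^{q_\ep} \to C_1\delta_{x_0}$, so $\lambda_\ep^{n/(q_0+1)} v_\ep(y) \to C_1 G(y,x_0)$ and hence $\lambda_\ep^{pn/(q_0+1)} v_\ep^p(y) \to C_1^p G^p(y,x_0)$, with the pointwise bound $\lambda_\ep^{n/(q_0+1)} v_\ep(y) \le C G(y,x_0) \le C|y-x_0|^{-(n-2s)}$ from Proposition \ref{lem-gu} and \eqref{eq-G} (here the rescaled bound $\tv_\ep \le C|z|^{-(n-2s)}$ translates to exactly $v_\ep(y) \le C\lambda_\ep^{-n/(q_0+1)}|y-x_\ep|^{-(n-2s)}$). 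Since $p(n-2s) < n$, the function $G^p(\cdot,x_0)$ is integrable, so the dominated convergence theorem applies to
\[
\lambda_\ep^{\frac{pn}{q_0+1}} u_\ep(x) = \int_\Omega G(x,y)\, \lambda_\ep^{\frac{pn}{q_0+1}} v_\ep^p(y)\,dy \longrightarrow C_1^p \int_\Omega G(x,y)\, G^p(y,x_0)\,dy = C_1^p\, \wtg(x,x_0),
\]
the last equality being the definition \eqref{eq-tg-1} of $\wtg$ (inverting $(-\Delta)^s$), and convergence is in $C^0(\Omega\setminus\{x_0\})$ once one checks uniform continuity in $x$ away from $x_0$ (the integrand's $x$-dependence sits only in $G(x,\cdot)$, which is equicontinuous on $\Omega\setminus B(x_0,\delta)$). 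The hypothesis $p\ge 1$ is used to make sense of $v_\ep^p$ and to apply Jensen-type bounds; it also enters Proposition \ref{lem-gu}.

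\emph{Case (2), $p = n/(n-2s)$ (borderline).} This is the delicate case and the main obstacle. Now $G^p(\cdot,x_0) \sim |y-x_0|^{-n}$ is \emph{logarithmically non-integrable}, which is precisely why a $\log\lambda_\ep$ correction appears. The plan is to split the integral $\int_\Omega G(x,y) v_\ep^p(y)\,dy$ into an inner region $|y - x_\ep| \le \lambda_\ep^{-1} M$ (where one rescales and uses $\tv_\ep \to \mcv$, contributing a bounded quantity), an intermediate annulus $\lambda_\ep^{-1} M \le |y-x_\ep| \le \delta$ (the dominant region, where $\lambda_\ep^{n/(q_0+1)} v_\ep(y) \approx g_{n,s} C_1 |y-x_0|^{-(n-2s)}$ so that $v_\ep^p(y) \approx (g_{n,s}C_1)^{n/(n-2s)} \lambda_\ep^{-pn/(q_0+1)} |y-x_0|^{-n}$, and $\int_{\lambda_\ep^{-1}M \le |y| \le \delta} |y|^{-n}\,dy = |\ms^{n-1}|\log(\delta\lambda_\ep/M) \sim |\ms^{n-1}|\log\lambda_\ep$), and an outer region $|y-x_\ep| \ge \delta$ (again bounded). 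The leading term comes out as $(g_{n,s}C_1)^{n/(n-2s)} |\ms^{n-1}| (\log\lambda_\ep) \, G(x,x_0) = C_3 (\log\lambda_\ep) G(x,x_0)$, with $G(x,y) \approx G(x,x_0)$ on the shrinking annulus by continuity; one must verify the inner and outer contributions are $o(\log\lambda_\ep)$ and that the $x$-dependence is handled uniformly on compact subsets of $\Omega\setminus\{x_0\}$. The careful matching of these three regions, and in particular pinning down the constant $C_3$, is the part requiring the most attention; the estimates for $\tv_\ep$ in this borderline regime are exactly what Appendix \ref{app-b} supplies, and I would invoke them here rather than redo them.
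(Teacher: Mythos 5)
Your proposal is correct and follows essentially the same route as the paper: Green's representation plus $\lambda_{\ep}^{n/(p+1)}v_{\ep}^p \to C_2\delta_{x_0}$ in Case (1), dominated convergence against the integrable kernel $G^p(\cdot,x_0)$ in Case (3), and the Appendix~\ref{app-b} asymptotics $\frac{1}{\log\lambda_{\ep}}\int_{\Omega_{\ep}}\tv_{\ep}^p \to (g_{n,s}C_1)^{n/(n-2s)}|\ms^{n-1}|$ in Case (2), where the paper merely packages your three-region split as $G(x,x_{\ep})\int_{\Omega}\lambda_{\ep}^{n/(p+1)}v_{\ep}^p\,dy$ plus an $O(1)$ remainder involving $G(x,y)-G(x,x_{\ep})$. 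The only blemish is a typo in Case (1): the displayed vanishing limit should concern $\lambda_{\ep}^{n/(p+1)}v_{\ep}^p(x)$, not $u_{\ep}^{q_{\ep}}$.
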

\begin{proof}
\textsc{Case 1.} Suppose that $p \in (n/(n-2s), (n+2s)/(n-2s))$. This case can be handled as in the proof of Lemma \ref{lem-v}.
By \eqref{eq-decay}, the function $\mcv^p$ is integrable in $\R^n$.
Thus Proposition \ref{lem-gu} gives
\[\lim_{\ep \to 0} \lambda_{\ep}^{\frac{n}{p+1}} v_{\ep}^{p_{\ep}}(x) = C_2 \delta_{x_0} (x).\]
Accordingly, we obtain from Green's representation formula \eqref{eq-2-29} that
\[\lim_{\ep \to 0} \lambda_{\ep}^{\frac{n}{p+1}} u_{\ep}(x) dx = \lim_{\ep \to 0} \int_{\Omega} G(x,y) \lambda_{\ep}^{\frac{n}{p+1}} v_{\ep}^p(y) dy = C_2 G(x,x_0)\]
in $C^0(\Omega \setminus \{x_0\})$.

\medskip \noindent \textsc{Case 2.} Assume that $p = n/(n-2s)$. By \eqref{eq-2-29}, it holds that
\[\lambda_{\ep}^{\frac{n}{p+1}} u_{\ep}(x)
= G(x,x_{\ep}) \int_{\Omega} \lambda_{\ep}^{\frac{n}{p+1}} v_{\ep}^p (y) dy
+ \int_{\Omega} [G(x,y) - G(x,x_{\ep})] \lambda_{\ep}^{\frac{n}{p+1}} v_{\ep}^p(y) dy.\]
Choose $r > 0$ so small that $\text{dist}(x, x_{\ep}) > 2r$. By virtue of Proposition \ref{lem-gu}, we have
\begin{align*}
\int_{B^n(x_{\ep}, r)} |G(x,y) - G(x,x_{\ep})| \lambda_{\ep}^{\frac{n}{p+1}} v_{\ep}^p(y) dy
&\le C \int_{B^n(x_{\ep}, r)} |y-x_{\ep}| \lambda_{\ep}^{\frac{n}{p+1}} v_{\ep}^p(y) dy
\\
&\le C \lambda_{\ep}^{-1} \int_{B^n(0, \lambda_{\ep} r)} |y| \tv^p(y) dy
\\
&\le C \lambda_{\ep}^{-1} \int_{B^n(0, \lambda_{\ep} r)} (1+|y|)^{-(n-1)} dy
\\
&\le C
\end{align*}
and
\[\int_{\Omega \setminus B^n(x_{\ep},r)} |G(x,y) - G(x,x_{\ep})| \lambda_{\ep}^{\frac{n}{p+1}} v_{\ep}^p(y) dy
\le C \lambda_{\ep}^{\frac{n}{p+1}} \cdot (\lambda_{\ep}^{n \over p+1} \lambda_{\ep}^{-(n-2s)})^p = C.\]
Therefore
\[\lim_{\ep \to 0} \frac{\lambda_{\ep}^{\frac{n}{p+1}}}{\log \lambda_{\ep}} u_{\ep}(x)
= \lim_{\ep \to 0} \( \frac{1}{\log \lambda_{\ep}} \int_{\Omega} \lambda_{\ep}^{\frac{n}{p+1}} v_{\ep}^{p} (y) dy \) G(x,x_0)\]
in $C^0(\Omega \setminus \{x_0\})$. Applying Lemma \ref{lem-b-2}, we obtain the desired result.

\medskip \noindent \textsc{Case 3.} Assume that $p \ge 1$ and $p \in (2s/(n-2s), n/(n-2s))$. From Proposition \ref{lem-gu}, we see
\[ (\lambda_{\ep}^{\frac{n}{q_0+1}} v_{\ep})^p (y)
\le{C \over |y-x_{\ep}|^{(n-2s)p}}\quad \forall ~y \in \Omega \setminus \{x_{\ep}\}.\]
Hence we deduce with the dominated convergence theorem, Lemma \ref{lem-v} and \eqref{eq-tg-1} that
\[\lim_{\ep \to 0} \lambda_{\ep}^{\frac{np}{q_0+1}} u_{\ep}(x) = \lim_{\ep \to 0} \int_{\Omega} G(x,y) (\lambda_{\ep}^{\frac{n}{q_0+1}} v_{\ep})^p (y) dy
= C_1^p \int_{\Omega} G(x,y) G^p(y,x_0) dy = C_1^p \wtg(x,x_0)\]
in $C^0(\Omega \setminus \{x_0\})$. The proof is completed.
\end{proof}
\begin{proof}[Proof of Theorem \ref{thm-1}] It follows directly from Lemmas \ref{lem-bb}, \ref{lem-v} and \ref{lem-u}.
\end{proof}

\appendix
\section{The Miscellaneous}\label{app-a}
This section is devoted to prove two technical lemmas needed in the proof of the main theorems.

\medskip
The first lemma describes the continuity and compactness property of the inverse fractional Laplacian $(-\Delta)^{-s}$ in $\Omega$.
\begin{lem}\label{lem-cpt}
For a smooth bounded domain $\Omega \subset \R^n$, let $G$ be Green's function of the fractional Dirichlet Laplacian $(-\Delta)^s$ in $\Omega$ defined in Subsection \ref{subsec_green}
and $(-\Delta)^{-s}$ the inverse fractional Laplacian given by
\[(-\Delta)^{-s} f(x) = \int_{\Omega} G(x,y) f(y) dy \quad \text{for } x \in \Omega\]
for $f \in L^{r_0}(\Omega)$ with any fixed $r_0 \in (1, \infty)$.
Suppose that $r_1 \in (1,\infty)$ is the number satisfying $1/r_0 - 1/r_1 = 2s/n$.
Then $(-\Delta)^{-s}: L^{r_0}(\Omega) \to L^r(\Omega)$ is bounded for any $r \in [1, r_1]$ and compact for $r \in [1, r_1)$.
\end{lem}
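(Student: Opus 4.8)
The plan is to reduce both assertions to the pointwise bound $0 < G(x,y) < g_{n,s}|x-y|^{-(n-2s)}$, which follows from \eqref{eq-G} with $t=0$, so that, extending $f$ by zero outside $\Omega$, one has the domination $|(-\Delta)^{-s}f(x)| \le g_{n,s}\big(|\cdot|^{-(n-2s)} * |f|\big)(x)$ for a.e.\ $x$. For the boundedness part I would then invoke the Hardy--Littlewood--Sobolev inequality \eqref{eq-hls} with $\lambda = n-2s \in (0,n)$ and the conjugate relation $1/r_0 + (n-2s)/n = 1 + 1/r_1$, which is equivalent to $1/r_0 - 1/r_1 = 2s/n$; this gives $\|(-\Delta)^{-s}f\|_{L^{r_1}(\Omega)} \le C\|f\|_{L^{r_0}(\Omega)}$, and boundedness into $L^r(\Omega)$ for every $r \in [1,r_1]$ follows because $|\Omega| < \infty$ makes the inclusion $L^{r_1}(\Omega) \hookrightarrow L^r(\Omega)$ continuous.

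For compactness it suffices to treat the target $L^{\tilde r}(\Omega)$ with $\tilde r \in (r_0,r_1)$: the case $r \in (r_0,r_1)$ is $\tilde r = r$, and for $r \in [1,r_0]$ one composes with the continuous embedding $L^{\tilde r}(\Omega) \hookrightarrow L^r(\Omega)$ (valid since $\tilde r > r_0 \ge r$ and $|\Omega|<\infty$). Fixing such a $\tilde r$, I would split the kernel at a scale $\delta > 0$ as $G = K_\delta + \widetilde K_\delta$ with $K_\delta(x,y) = G(x,y)\chi_{\{|x-y|<\delta\}}$, inducing a decomposition $(-\Delta)^{-s} = A_\delta + B_\delta$ of the operator. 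For the singular part, the domination above together with Young's inequality, applied with the exponent $m$ determined by $1/m = 1 + 1/\tilde r - 1/r_0$ --- which lies in $\big((n-2s)/n,\,1\big)$, i.e.\ $m \in (1,\,n/(n-2s))$, precisely because $\tilde r \in (r_0,r_1)$ --- yields $\|A_\delta\|_{L^{r_0}(\Omega)\to L^{\tilde r}(\Omega)} \le g_{n,s}\big\||\cdot|^{-(n-2s)}\chi_{\{|\cdot|<\delta\}}\big\|_{L^m(\R^n)}$, and the right-hand side equals a constant times $\delta^{(n-(n-2s)m)/m} \to 0$ as $\delta \to 0$, since $(n-2s)m < n$. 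For the regular part one has $0 < \widetilde K_\delta(x,y) < g_{n,s}\delta^{-(n-2s)}$, so $\widetilde K_\delta \in L^\infty(\Omega\times\Omega)$; as $|\Omega|<\infty$ this kernel belongs to the mixed-norm space $L^{\tilde r}_x\big(L^{r_0'}_y\big)$ (with $r_0'$ the exponent conjugate to $r_0$), in which finite-sum tensor kernels $\sum_i a_i(x)b_i(y)$ --- whose associated integral operators have finite rank --- are dense, and the operator norm $L^{r_0}\to L^{\tilde r}$ is dominated by that mixed norm of the kernel; hence $B_\delta$ is an operator-norm limit of finite-rank operators, and therefore compact.

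Putting these together, $\|(-\Delta)^{-s} - B_\delta\|_{L^{r_0}(\Omega)\to L^{\tilde r}(\Omega)} = \|A_\delta\| \to 0$ as $\delta \to 0$ while each $B_\delta$ is compact, so $(-\Delta)^{-s}$ is an operator-norm limit of compact operators and hence compact from $L^{r_0}(\Omega)$ into $L^{\tilde r}(\Omega)$, which finishes the proof after the reduction above. The one genuinely delicate point is the borderline nature of the Riesz kernel $|x|^{-(n-2s)}$: it fails to lie in $L^{n/(n-2s)}(\R^n)$, so one cannot obtain even the boundedness $L^{r_0}\to L^{r_1}$ by a single application of Young's inequality (hence the appeal to Hardy--Littlewood--Sobolev), and compactness forces one to exploit separately that the near-diagonal truncation of the kernel does sit in $L^m$ for every $m < n/(n-2s)$ while the complementary part is bounded on the bounded set $\Omega$ --- which is exactly why $r = r_1$ must be excluded from the compactness statement.
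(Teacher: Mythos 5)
Your proof is correct, and it takes a genuinely different (and in some respects cleaner) route than the paper's. The paper first proves compactness for the pure Riesz-kernel operator $\mcl_1 f = g_{n,s}|x|^{-(n-2s)}*f$ by truncating the kernel away from the diagonal, controlling the error for each fixed bounded sequence via an interpolation between the $L^{r_0}\to L^{r_0}$ (Young) and $L^{r_0}\to L^{r_1}$ (Hardy--Littlewood--Sobolev) bounds, and running a Cantor diagonal argument; it then writes $(-\Delta)^{-s}=\mcl_1-\mcl_2$ with $\mcl_2$ the operator with kernel $H(x,y)$, and --- because $H(x,y)$ is only known to be bounded for $x$ in compact subsets of $\Omega$ --- it needs a second diagonalization over interior exhaustions $\mci(\Omega,\delta_k)$, absorbing the boundary strip $\mco(\Omega,\delta)$ by H\"older and the uniform $L^{r_1}$ bound. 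You avoid the decomposition $G=g_{n,s}|x-y|^{-(n-2s)}-H$ altogether: truncating $G$ itself at the diagonal scale $\delta$ makes the far part globally bounded by $g_{n,s}\delta^{-(n-2s)}$ via \eqref{eq-G}, so the boundary singularity of $H$ never enters, and your Young-inequality estimate on the near part gives genuine operator-norm smallness, so compactness follows from the single statement that a norm limit of compact operators is compact --- no sequential or diagonal arguments needed. What the paper's approach buys is that it isolates the compactness of the model operator $\mcl_1$ on all of $\R^n$-type kernels as a separate step; what yours buys is brevity and the elimination of both diagonalizations. The only points worth being explicit about if you wrote this up in full are the routine ones you already flag: the domination of the $L^{r_0}\to L^{\tilde r}$ operator norm by the mixed norm $\|K\|_{L^{\tilde r}_x(L^{r_0'}_y)}$ (H\"older in $y$, then integrate in $x$) and the density of finite-rank tensor kernels there, both standard for finite exponents on a bounded domain.
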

\begin{proof}
By \eqref{eq-G} and \eqref{eq-hls}, $\mcl_0 := (-\Delta)^{-s}$ is a bounded operator from $L^{r_0}(\Omega)$ to $L^{r_1}(\Omega)$.
We prove the compactness of $\mcl_0: L^{r_0}(\Omega) \to L^r(\Omega)$ for $r \in [1, r_1)$ in two steps.

\medskip \noindent \textbf{Step 1.}
Let us define an operator $\mcl_1: L^{r_0}(\Omega) \to L^r(\Omega)$  for any $r \in [1, r_1]$ by
\[\mcl_1 f(x) = \int_{\Omega} \frac{g_{n,s}}{|x-y|^{n-2s}} f(y) dy \quad \text{for } x \in \R^n,\]
which is continuous. We claim that it is compact for $r \in [1, r_1)$.
To justify this, it suffices to show that for any bounded sequence $\{f_k\}_{k \in \N}$ in $L^{r_0}(\Omega)$,
$\{\mcl_1 f_k\}_{k \in \N}$ possesses a convergent subsequence in $L^r(\Omega)$ whenever $r \in (r_0, r_1)$.

Putting $f_k = 0$ in $\R^n \setminus \Omega$, we set
\[F_k = \mathcal{L}_1 f_k \quad \text{and} \quad F_{k,\vep} = \int_{\{|\cdot-y| \ge \vep\}} \frac{g_{n,s}}{|\cdot-y|^{n-2s}} f_k(y) dy \quad \text{in } \R^n\]
for each $k \in \N$ and $\vep > 0$ small.
If $\theta \in (0,1)$ is chosen to be a number satisfying $1/r = \theta/r_0 + (1-\theta)/r_1$,
then it follows from Young's inequality and the Hardy-Littlewood-Sobolev inequality \eqref{eq-hls} that
\begin{align*}
\|F_k - F_{k,\vep}\|_{L^r(\R^n)} &\le \|F_k - F_{k,\vep}\|_{L^{r_0} (\R^n)}^{\theta} \|F_k - F_{k,\vep}\|_{L^{r_1}(\R^n)}^{1-\theta} \\
&\le C \(\vep^{2s} \|f_k\|_{L^{r_0} (\R^n)}\)^{\theta} \cdot \|f_k\|_{L^{r_0} (\R^n)}^{1-\theta} \le C \vep^{2s\theta}
\end{align*}
where $C > 0$ is independent of $k \in \N$.
For each fixed $\vep > 0$, $\{F_{k,\vep}\}_{k \in \N}$ is precompact in $L^r(\Omega)$
because their kernels are uniformly bounded in $k \in \N$ and the domain $\Omega$ is bounded.
Therefore $F_{k,\vep}$ converges to a function $F_{0,\vep}$ in $L^r(\Omega)$ passing to a subsequence, and
\begin{equation}\label{eq-e-2}
\|F_{0,\vep_1} - F_{0,\vep_2}\|_{L^r(\Omega)} = \lim_{k \to \infty} \|F_{k,\vep_1} - F_{k,\vep_2}\|_{L^r(\Omega)} \le C\(\vep_1^{2s\theta} + \vep_2^{2s\theta}\).
\end{equation}
As a result, for any sequence $\{\vep_m\}_{m \in \N} \subset (0, \infty)$ which tends to 0,
$\{F_{0,\vep_m}\}_{m \in \N}$ subconverges to $F_0 \in L^r(\Omega)$.
Now one may apply Cantor's diagonal argument to deduce $F_k \to F_0$ in $L^r(\Omega)$ along a subsequence.
The assertion is proved.

\medskip \noindent \textbf{Step 2.} Let us complete the proof.
Fix a value $r \in [1, r_1)$ and a bounded sequence $\{f_k\}_{k \in \N}$ in $L^{r_0}(\Omega)$.
Since $\{\mcl_0 f_k\}_{k \in \N}$ is bounded in $L^{r_1}(\Omega)$, we can find a function $g \in L^{r_1}(\Omega)$ such that $\mcl_0 f_k \rightharpoonup g$ in $L^{r_1}(\Omega)$ up to a subsequence.
The main task is to prove that $\mcl_0 f_k \to g$ in $L^r(\Omega)$ after passing to a subsequence.
A subtle issue arises due to the singular behavior of $H(x,y)$ for the variable $x$ near the boundary $\pa \Omega$.
To avoid this technicality, we again proceed a diagonalization argument.

Because of \cite[Lemma 2.4]{CKL}, for each $\delta > 0$, there is a constant $C_{\delta} > 0$ such that
\[|H(x,y)| + |\nabla_x H(x,y)| \le C_{\delta} \quad \text{for all } x \in \mci(\Omega,\delta),\, y \in \Omega\]
where $\mci(\Omega,\delta) = \{ x \in \Omega : \text{dist}(x, \pa \Omega) > \delta\}$.
By this property, the operator
\[\mcl_2f(x) := \int_{\Omega} H(x,y) f(y) dy \quad \text{for } x \in \Omega \]
is compact from $L^{r_0}(\Omega)$ to $L^r(\mci(\Omega,\delta))$ for arbitrary $r \in [1, \infty)$.
In view of the assertion proved in the previous step, $\mcl_0: L^{r_0}(\Omega) \to L^r(\mci(\Omega,\delta))$ is compact for any $r \in [1, r_1)$.

Pick any sequence $\{\delta_k\}_{k \in \N}$ of small numbers converging to 0.
Moreover we construct functions $f_{km}$ for $k, m \in \N$ as follows: For each $k \in \N$,
\begin{itemize}
\item[-] $\{f_{(k+1)m}\}_{m \in \N}$ is a subsequence of  $\{f_{km}\}_{m \in \N}$;
\item[-] $\mcl_0 f_{km} \to g$ in $L^r(\mci(\Omega,\delta_k))$ as $m \to \infty$.
\end{itemize}
Then applying H\"older's inequality gives
\begin{equation}\label{eq-a-2}
\begin{aligned}
\|\mcl_0 f_{kk} - g\|_{L^r(\Omega)} &\le \|\mcl_0 f_{kk} - g\|_{L^r(\mci(\Omega,\delta_m))} + \|\mcl_0 f_{kk} - g\|_{L^r(\mco(\Omega,\delta_m))}
\\
&\le \|\mcl_0 f_{kk} - g\|_{L^r(\mci(\Omega,\delta_m))} + C \delta_m^{\frac{1}{r} - \frac{1}{r_1}} \( \sup_{k \in \N} \|\mcl_0 f_{kk}\|_{L^{r_1}(\Omega)} + \|g\|_{L^{r_1}(\Omega)} \)
\end{aligned}
\end{equation}
for arbitrary $m \in \N$ where $\mco(\Omega,\delta) = \{ x \in \Omega : \text{dist}(x, \pa \Omega) < \delta\}$.
Since $\mcl_0 f_{kk} \to g$ in $L^r(\mci(\Omega,\delta_m))$ as $k \to \infty$, we can deduce from \eqref{eq-a-2} that
\[\limsup_{k \to \infty} \|\mcl_0 f_{kk} - g\|_{L^r(\Omega)} \le C \delta_m^{\frac{1}{r} - \frac{1}{r_1}} \quad \text{for all } m \in \N.\]
Hence $\mcl_0 f_{kk} \to g$ in $L^r(\Omega)$ as $k \to \infty$ whenever $r \in [1, r_1)$. This completes the proof.
\end{proof}

In the remaining part of this appendix, we shall prove Lemma \ref{lem-a-4}.
Recalling the notations $\lambda_{\ep}$, $x_{\ep}$ and $\Omega_{\ep}$ given in Section \ref{sec-4}, we set $(-\Delta)_{\ep}^{-s}: L^{\infty}(\Omega_{\ep}) \to L^{\infty}(\Omega_{\ep})$ by
\[(-\Delta)_{\ep}^{-s} f(x) = \int_{\Omega_{\ep}} G_{\ep}(x,y) f(y) dy \quad \text{for all } x \in \Omega_{\ep}\]
for each $\ep > 0$ small, where
\begin{equation}\label{eq-G_ep}
G_{\ep}(x,y) := \lambda_{\ep}^{-(n-2s)} G(\lambda_{\ep}^{-1} x + x_{\ep},\, \lambda_{\ep}^{-1} y + x_{\ep}) \quad \text{for every } (x,y) \in \Omega_{\ep} \times \Omega_{\ep}.
\end{equation}
\begin{lem}\label{lem-a-82}
The function $\tw_{\ep}$ defined in \eqref{eq-tw} satisfies
\[
\tw_{\ep}^{\frac{1}{q_{\ep}}} = (-\Delta)_{\ep}^{-s} ((-\Delta)_{\ep}^{-s} \tw_{\ep})^p \quad \text{in } \Omega_{\ep}.\]
\end{lem}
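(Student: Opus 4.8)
The plan is to track how the integral identity \eqref{eq-w-1} for $w_{\ep} := u_{\ep}^{q_{\ep}}$ transforms under the dilation defining $\tw_{\ep}$. Recall first that, since $(u_{\ep},v_{\ep})$ solves \eqref{eq-main}, inverting $(-\Delta)^s$ yields \eqref{eq-3-70}, so with $v_{\ep} = (-\Delta)^{-s}w_{\ep}$ one has $u_{\ep} = (-\Delta)^{-s}v_{\ep}^p$ and hence $w_{\ep}^{1/q_{\ep}} = (-\Delta)^{-s}\big((-\Delta)^{-s}w_{\ep}\big)^p$ in $\Omega$; moreover $w_{\ep}\in L^{\infty}(\Omega)$ (Lemma \ref{lem-min}), so all the operators below are applied to bounded functions.

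The key elementary step I would isolate is a conjugation identity for $(-\Delta)_{\ep}^{-s}$ under the scaling $x\mapsto \lambda_{\ep}^{-1}x+x_{\ep}$: for $F\in L^{\infty}(\Omega)$ and $a\in\R$, if $f\in L^{\infty}(\Omega_{\ep})$ is defined by $f(x)=\lambda_{\ep}^{-a}F(\lambda_{\ep}^{-1}x+x_{\ep})$, then substituting $z=\lambda_{\ep}^{-1}y+x_{\ep}$ in the defining integral and using the explicit form \eqref{eq-G_ep} of $G_{\ep}$ (the Jacobian contributes $\lambda_{\ep}^{n}$, the kernel carries $\lambda_{\ep}^{-(n-2s)}$) gives
\[
(-\Delta)_{\ep}^{-s}f(x) = \lambda_{\ep}^{\,2s-a}\,\big((-\Delta)^{-s}F\big)(\lambda_{\ep}^{-1}x+x_{\ep}),\qquad x\in\Omega_{\ep}.
\]
In words, $(-\Delta)_{\ep}^{-s}$ sends the function obtained from $F$ by the exponent-$a$ rescaling to the one obtained from $(-\Delta)^{-s}F$ by the exponent-$(a-2s)$ rescaling.

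It then remains to apply this identity twice. By \eqref{eq-tw}, $\tw_{\ep}$ is the exponent-$(\alpha_{\ep}q_{\ep})$ rescaling of $w_{\ep}$, so $(-\Delta)_{\ep}^{-s}\tw_{\ep}$ is the exponent-$(\alpha_{\ep}q_{\ep}-2s)$ rescaling of $v_{\ep}=(-\Delta)^{-s}w_{\ep}$; since $\alpha_{\ep}q_{\ep}-2s=\beta_{\ep}$ by \eqref{eq-max} and \eqref{eq-ab}, this is precisely $\tv_{\ep}$ on $\Omega_{\ep}$, and consequently $\big((-\Delta)_{\ep}^{-s}\tw_{\ep}\big)^p=\tv_{\ep}^{p}$ is the exponent-$(\beta_{\ep}p)$ rescaling of $v_{\ep}^{p}$. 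Applying the identity once more, $(-\Delta)_{\ep}^{-s}\big((-\Delta)_{\ep}^{-s}\tw_{\ep}\big)^p$ is the exponent-$(\beta_{\ep}p-2s)$ rescaling of $(-\Delta)^{-s}v_{\ep}^{p}=u_{\ep}$; and $\beta_{\ep}p-2s=\alpha_{\ep}$, again by \eqref{eq-max} and \eqref{eq-ab}, so the right-hand side equals the exponent-$\alpha_{\ep}$ rescaling of $u_{\ep}$, i.e. $\tu_{\ep}$. Since $\tw_{\ep}^{1/q_{\ep}}=(\tu_{\ep}^{q_{\ep}})^{1/q_{\ep}}=\tu_{\ep}$ on $\Omega_{\ep}$, this is exactly the claimed identity. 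I do not expect a genuine obstacle here: the proof is a single change of variables together with the two arithmetic identities $\alpha_{\ep}q_{\ep}-2s=\beta_{\ep}$ and $\beta_{\ep}p-2s=\alpha_{\ep}$; the only point requiring care is that $G_{\ep}$ and all integrations live on $\Omega_{\ep}$, so that the null extension of $\tw_{\ep}$ outside $\Omega_{\ep}$ plays no role and the composition is well defined.
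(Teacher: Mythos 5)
Your proof is correct and is essentially the paper's own argument: the paper performs exactly this change of variables (in the composed form, using \eqref{eq-w-1} and the definition \eqref{eq-G_ep} of $G_{\ep}$), with the exponent bookkeeping reducing to $\alpha_{\ep}(pq_{\ep}-1)=2s(p+1)$, just as your two identities $\alpha_{\ep}q_{\ep}-2s=\beta_{\ep}$ and $\beta_{\ep}p-2s=\alpha_{\ep}$ do. Isolating the conjugation identity and noting the intermediate fact $(-\Delta)_{\ep}^{-s}\tw_{\ep}=\tv_{\ep}$ is a clean presentation of the same computation.
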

\begin{proof}
By \eqref{eq-tw} and the definition of $G_{\ep}$, it holds that
\[(-\Delta)_{\ep}^{-s} \tw_{\ep}(x) = \lambda_{\ep}^{2s - \alpha_{\ep} q_{\ep}} ((-\Delta)^{-s} w_{\ep})(\lambda_{\ep}^{-1}x + x_{\ep}) \quad \text{for } x \in \Omega_{\ep}.\]
Making use of \eqref{eq-tw}, \eqref{eq-w-1}, \eqref{eq-ab}, the above equality and a change of variables, we compute
\begin{align*}
\tw_{\ep}^{\frac{1}{q_{\ep}}}(x) &= \lambda_{\ep}^{-\alpha_{\ep}} (-\Delta)^{-s} ((-\Delta)^{-s} w_{\ep})^p(\lambda_{\ep}^{-1}x + x_{\ep}) \\
&= \lambda_{\ep}^{-\alpha_{\ep}} \int_{\Omega} G(\lambda_{\ep}^{-1} x + x_{\ep}, \, y)((-\Delta)^{-s} w_{\ep})^p(y) dy \\
&= \lambda_{\ep}^{-\alpha_{\ep}} \int_{\Omega_{\ep}} \lambda_{\ep}^{n-2s} G_{\ep}(x,y) \cdot \lambda_{\ep}^{(\alpha_{\ep} q_{\ep} - 2s)p} ((-\Delta)_{\ep}^{-s} \tw_{\ep})^p(y) \cdot \lambda_{\ep}^{-n} dy \\
&= (-\Delta)_{\ep}^{-s} ((-\Delta)_{\ep}^{-s} \tw_{\ep})^p (x) \end{align*}
for $x \in \Omega_{\ep}$. The lemma is proved.
\end{proof}

\begin{proof}[Proof of Lemma \ref{lem-a-4}]
In this proof, we use a convention that the inverse fractional Laplacian $(-\Delta)^{-s}$ is the operator defined in \eqref{eq-rep-2}.

\medskip
Since $(q_{\ep}+1) \alpha_{\ep} -n > 0$, we deduce from \eqref{eq-a-34} and \eqref{eq-a-81} that
\[\|\tw_{\ep}\|_{L^{ \frac{q_{\ep}+1}{q_{\ep}}} (\Omega_{\ep})} \le C\]
for some constant $C > 0$ independent of $\ep > 0$. Recall also that $\|\tw_{\ep}\|_{L^{\infty}(\Omega_{\ep})} \le C$.
Thus, for an arbitrary number $\zeta_1 > (q_0+1)/q_0$ and small $\ep > 0$, it holds that
\begin{equation}\label{eq-a-51}
\|\tw_{\ep}\|_{L^{\zeta_1} (\Omega_{\ep})} \le C
\end{equation}
and $\tw_{\ep}$ converges weakly to a certain function $W$ in $L^{\zeta_1}(D)$ along a subsequence.
By Eq. \eqref{eq-b-20} and elliptic regularity (see \cite{CDDS, CaS, CSt}), the family $\{(\tu_{\ep}, \tv_{\ep})\}_{\ep > 0}$ is uniformly bounded in $(C^{\alpha}(D))^2$ for some $\alpha \in (0,2)$.
Therefore the same is true for $\{\tw_{\ep}\}_{\ep > 0}$.
Moreover, if we fix any small $\zeta_1 > (q_0+1)/q_0$ and choose $\zeta_2 \in (1, \infty)$ such that $1/\zeta_1 + 2s/n = 1/\zeta_2$,
then we get from \eqref{eq-hls} and \eqref{eq-a-51} that
\[\|(-\Delta)_{\ep}^{-s} \tw_{\ep}\|_{L^{\zeta_2}(D)} \le C \|\tw_{\ep}\|_{L^{\zeta_1}(\Omega_{\ep})} \le C.\]
Hence $(-\Delta)_{\ep}^{-s} \tw_{\ep}$ converges weakly to some $V$ in $L^{\zeta_2}(D)$.

For a nonnegative function $\phi \in C_c^{\infty}(D)$, if $\ep > 0$ is so small that $\text{supp}\, \phi \subset D$, the symmetry property of $G_{\ep}$ guarantees that
\begin{equation}\label{eq-a-40}
\int_{\Omega_{\ep}} (-\Delta)_{\ep}^{-s} \tw_{\ep} (x) \phi (x) dx = \int_{\Omega_{\ep}} \tw_{\ep} (-\Delta)_{\ep}^{-s} \phi (x) dx.
\end{equation}
Note that for any nonnegative $\phi \in C_c^{\infty}(D)$,
\begin{equation}\label{eq-a-41}
\begin{aligned}
\limsup_{\ep \to 0}\( (-\Delta)_{\ep}^{-s} \phi (x)\)
&\le \int_{\R^n}\left[\limsup_{\ep \to 0}\lambda_{\ep}^{-(n-2s)}  G(\lambda_{\ep}^{-1} x +x_{\ep},\, \lambda_{\ep}^{-1}y +x_{\ep})\right] \phi (y) dy
\\
&\le  \int_{\R^n}\frac{g_{n,s}}{|x-y|^{n-2s}} \phi (y) dy = (-\Delta)^{-s} \phi (x).
\end{aligned}
\end{equation}
Taking \eqref{eq-a-41} into account, we send $\ep \to 0$ in \eqref{eq-a-40} to get
\[\int_{D} V(x) \phi (x) dx \le \int_{D} W (x) (-\Delta)^{-s} \phi (x) dx = \int_{D} (-\Delta)^{-s} W (x) \phi (x) dx,\]
which shows that $V(x) \le (-\Delta)^{-s} W(x)$ for a.e. $x \in D$.
As a consequence, taking $\ep \to 0$ in the identity
\[\int_{\Omega_{\ep}} \tw_{\ep}^{\frac{1}{q_{\ep}}}(x) \phi (x) dx = \int_{\Omega_{\ep}} (-\Delta_{\ep})^{-s} \phi (x) ((-\Delta_{\ep})^{-s} \tw_{\ep})^{p} (x) dx\]
which was verified in Lemma \ref{lem-a-82}, we discover
\[\int_{D} W^{1 \over q_0}(x) \phi(x) dx \le \int_{D} (-\Delta)^{-s} \phi (x) V(x)^p dx \le \int_{D} (-\Delta)^{-s} \phi (x) ((-\Delta)^{-s} W)^p (x) dx.\]
As a result, \eqref{eq-a-13} is proved. Estimate \eqref{eq-W} is a simple consequence of \eqref{eq-a-51} and \eqref{eq-hls}.

\medskip
We are only left to prove that $W$ is nontrivial.
Applying \cite[Lemma 2.11]{CDDS} or \cite[Theorem 1.3]{CSt} into Eq. \eqref{eq-b-20},
we obtain that there exists $C > 0$ independent of $\ep > 0$ such that
\[\tu_{\ep} (x) \le C \textrm{dist}(x, \pa \Omega_{\ep})^{\min\{2s, 1\}} \quad \text{for all } x \in \Omega_{\ep}.\]
Hence putting $x = 0$ yields that
\[\text{dist}(0, \pa \Omega_{\ep}) \ge 2c_0 \quad \text{for some } c_0 > 0.\]
Since the family $\{\tw_{\ep}\}_{\ep > 0}$ is bounded in $C^{\alpha}(B^n(0,c_0))$ for some $\alpha \in (0,2)$ and $\tw_{\ep}(0) = 1$ for each $\ep > 0$,
we conclude that the limit $W$ of $\tw_{\ep}$ is nonzero. The proof is finished.
\end{proof}

\section{Further estimate on $\tv_{\ep}$}\label{app-b}
In this appendix, we give estimates on the sharp decay and the $L^p(\Omega_{\ep})$-norm of the scaled solution $\tv_{\ep}$, which was defined in \eqref{eq-tuv}.
This part is crucial in proving Lemma \ref{lem-u} (2).
\begin{lem}\label{lem-b-1}
Given any $\delta > 0$, there exist a small number $r > 0$ and large $R > 0$ such that
\begin{equation}\label{eq-b-11}
(1-\delta) g_{n,s} C_1 |x|^{-(n-2s)} \le \tv_{\ep}(x) \le (1+\delta) g_{n,s} C_1 |x|^{-(n-2s)}
\end{equation}
holds for any $x \in \Omega_{\ep}$ with $R \le |x| \le \lambda_{\ep} r$. Here $C_1 = \int_{\R^n} \mcu^{q_0}(x) dx$.
\end{lem}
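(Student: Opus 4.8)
The plan is to deduce \eqref{eq-b-11} directly from Green's representation \eqref{eq-b-2} of the rescaled solution. Writing $\tw_\ep:=\tu_\ep^{q_\ep}$, \eqref{eq-b-2} reads $\tv_\ep=I_\ep-J_\ep$ on $\Omega_\ep$, where
\[
I_\ep(x):=\int_{\Omega_\ep}\frac{g_{n,s}\,\tw_\ep(y)}{|x-y|^{n-2s}}\,dy,\qquad
J_\ep(x):=\lambda_\ep^{-(n-2s)}\int_{\Omega_\ep} H\!\left(\tfrac{x}{\lambda_\ep}+x_\ep,\tfrac{y}{\lambda_\ep}+x_\ep\right)\tw_\ep(y)\,dy .
\]
I would use two inputs. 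First, Proposition \ref{lem-gu} together with \eqref{eq-decay} furnishes an $\ep$-independent majorant $\psi\in L^1(\R^n)$, non-increasing for large argument and satisfying $\psi(r)\le C r^{-a}(\log r)^{b}$ for $r\ge1$ with some $a>n$ and $b\ge0$, such that $\tw_\ep\le\psi(|\cdot|)$ on $\Omega_\ep$; it is obtained by raising $\tu_\ep\le C\wtmcu$ to the power $q_\ep$ and using $\tu_\ep\le1$ near the origin, its decay exponent being $(n-2s)q_\ep$ when $p\ge n/(n-2s)$ and $(p(n-2s)-2s)q_\ep$ (which tends to $n+2s(p+1)$) when $p<n/(n-2s)$, hence larger than $n$ for all small $\ep$ since $q_0>n/(n-2s)$. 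Second, since $\Omega_\ep\to\R^n$ (Lemma \ref{lem-a-2}) and $\tw_\ep\to W=\mcu^{q_0}$ locally uniformly by interior elliptic regularity for \eqref{eq-b-20}, dominated convergence with majorant $\psi$ gives $\int_{\Omega_\ep}\tw_\ep\to\int_{\R^n}W=C_1$. Thus it suffices to prove $J_\ep(x)\le\tfrac{\delta}{3}g_{n,s}C_1|x|^{-(n-2s)}$ and $I_\ep(x)=\bigl(1+O(\delta)\bigr)g_{n,s}C_1|x|^{-(n-2s)}$ for $R\le|x|\le\lambda_\ep r$, once $r$ is small, $R$ large and $\ep$ small.

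For the remainder $J_\ep$ I would first observe that $u_\ep(x_\ep)=\lambda_\ep^{\alpha_\ep}\to\infty$ forces $x_\ep$ out of any fixed boundary collar (Lemma \ref{lem-bb}), so along the subsequence with $x_\ep\to x_0\in\Omega$ there is $c_0>0$ with $\mathrm{dist}(x_\ep,\pa\Omega)\ge c_0$ for all small $\ep$. Taking $r<c_0/2$ we get $\lambda_\ep^{-1}x+x_\ep\in\mci(\Omega,c_0/2)$ whenever $|x|\le\lambda_\ep r$, and since $H$ is symmetric and bounded on $\Omega\times\mci(\Omega,c_0/2)$ (Subsection \ref{subsec_green}),
\[
J_\ep(x)\le C\lambda_\ep^{-(n-2s)}\int_{\Omega_\ep}\tw_\ep\le C\lambda_\ep^{-(n-2s)}\le C\,r^{\,n-2s}\,|x|^{-(n-2s)},
\]
the last step using $|x|\le\lambda_\ep r$; shrinking $r$ makes the right-hand side at most $\tfrac{\delta}{3}g_{n,s}C_1|x|^{-(n-2s)}$.

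For the main term $I_\ep$ I would fix a small $\eta\in(0,1)$ and split $\Omega_\ep$ into $\{|y|\le\eta|x|\}$ and its complement. On $\{|y|\le\eta|x|\}$ one has $(1-\eta)|x|\le|x-y|\le(1+\eta)|x|$, so this part of $I_\ep$ equals $g_{n,s}|x|^{-(n-2s)}\bigl(1+O(\eta)\bigr)\int_{|y|\le\eta|x|}\tw_\ep$; choosing $\rho_0$ with $\int_{|y|\ge\rho_0}\psi\le\delta_1$ and $R\ge\rho_0/\eta$, the tail bound and $\int_{\Omega_\ep}\tw_\ep\to C_1$ give $\bigl|\int_{|y|\le\eta|x|}\tw_\ep-C_1\bigr|\le2\delta_1$ for $|x|\ge R$ and small $\ep$. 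On $\{|y|>\eta|x|\}\cap\Omega_\ep$ I would split once more: on $\{\eta|x|<|y|\le2|x|\}$ use $\tw_\ep(y)\le\psi(\eta|x|)$ together with $\int_{|x-y|\le3|x|}|x-y|^{-(n-2s)}\,dy\le C|x|^{2s}$, and on $\{|y|>2|x|\}$ use $|x-y|\ge|y|/2$; both contributions are bounded by $C\eta^{-a}|x|^{\,2s-a}(\log|x|)^{b}$, which is $o(1)\,|x|^{-(n-2s)}$ since $a>n$, hence at most $\tfrac{\delta}{3}g_{n,s}C_1|x|^{-(n-2s)}$ for $R$ large. Choosing $\eta$ first so that $(1\pm\eta)^{-(n-2s)}\in[1-\tfrac{\delta}{3},1+\tfrac{\delta}{3}]$, then $\rho_0$, $R$ and $r$, and finally $\ep$ so small that $\bigl|\int_{\Omega_\ep}\tw_\ep-C_1\bigr|\le\delta_1$ and $\mathrm{dist}(x_\ep,\pa\Omega)\ge c_0$, the three estimates combine to $(1-\delta)g_{n,s}C_1|x|^{-(n-2s)}\le\tv_\ep(x)=I_\ep(x)-J_\ep(x)\le(1+\delta)g_{n,s}C_1|x|^{-(n-2s)}$ on $R\le|x|\le\lambda_\ep r$, which is \eqref{eq-b-11}.

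The step I expect to be the main obstacle is the $\ep$-uniform control of the far field $\{|y|>\eta|x|\}$ in the Riesz potential $I_\ep$ over the expanding domains $\Omega_\ep$: this is exactly where the sharp pointwise decay of $\tu_\ep$ from Proposition \ref{lem-gu} is indispensable, and one must check, separately in each range of $p$, that the decay exponent of $\tw_\ep=\tu_\ep^{q_\ep}$ strictly exceeds $n$, so that $\int_{|y|\ge\rho}\psi$ is small and $|x|^{2s}\psi(c|x|)=o(|x|^{-(n-2s)})$. The only other delicate point --- the singularity of $H(\cdot,\cdot)$ near $\pa\Omega\times\pa\Omega$ --- is disposed of by keeping $x_\ep$ in a compact subset of $\Omega$ via the moving-plane estimate of Lemma \ref{lem-bb}.
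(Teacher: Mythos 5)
Your proposal is correct and follows essentially the same route as the paper: Green's representation \eqref{eq-b-2}, absorbing the regular-part term via $\lambda_\ep^{-(n-2s)}\le r^{n-2s}|x|^{-(n-2s)}$, splitting the Riesz potential at radius comparable to $|x|$, using $\int_{\Omega_\ep}\tw_\ep\to C_1$ for the near part and the decay from Proposition \ref{lem-gu} for the far part. Your extra care in checking that the decay exponent of $\tw_\ep=\tu_\ep^{q_\ep}$ exceeds $n$ in each range of $p$ is a welcome refinement of a step the paper carries out only for the logarithmic case, but it does not change the argument.
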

\begin{proof}
We will calculate each of the terms in the right-hand side of \eqref{eq-b-2}.

It holds that
\[\sup_{|x| \le \lambda_{\ep}} \sup_{y \in \Omega_{\ep}} H(\lambda_{\ep}^{-1} x + x_{\ep}, \lambda_{\ep}^{-1} y + x_{\ep}) \le C < \infty.\]
Thus, for $r \in (0,1)$ small and $|x| \le \lambda_{\ep} r$, the second term in the right-hand side of \eqref{eq-b-2} is computed as
\begin{equation}\label{eq-b-5}
\begin{aligned}
\int_{\Omega_{\ep}} \lambda_{\ep}^{-(n-2s)} H(\lambda_{\ep}^{-1} x + x_{\ep}, \lambda_{\ep}^{-1} y + x_{\ep}) \tu_{\ep}^{q_{\ep}}(y) dy
&\le C\lambda_{\ep}^{-(n-2s)} \int_{\Omega_{\ep}} \tu_{\ep}^{q_{\ep}}(y) dy \\
&\le C \lambda_{\ep}^{-(n-2s)} \le {\delta \over 3} |x|^{-(n-2s)}.
\end{aligned}
\end{equation}
To estimate the first term in the right-hand side of \eqref{eq-b-2}, we split it by
\begin{equation}\label{eq-b-3}
\int_{\Omega_{\ep}}\frac{g_{n,s}}{|x-y|^{n-2s}} \tu_{\ep}^{q_{\ep}}(y) dy
= \int_{B^n(0, r|x|)} \cdots + \int_{\Omega_{\ep}\setminus B^n(0, r|x|)} \cdots.
\end{equation}
Employing Proposition \ref{lem-gu}, we obtain
\begin{equation}\label{eq-b-4}
\begin{aligned}
\int_{\Omega_{\ep}\setminus B^n(0, r|x|)} \frac{g_{n,s}}{|x-y|^{n-2s}} \tu_{\ep}^{q_{\ep}}(y) dy
&\le C \int_{\Omega_{\ep} \setminus B^n(0, r|x|)} \frac{1}{|x-y|^{n-2s}} \(\frac{\log |y|}{|y|^{n-2s}}\)^{q_{\ep}} dy \\
&\le \frac{C}{r^m} |x|^{- \frac{n^2}{n-2s}} \le {\delta \over 3} |x|^{-(n-2s)}
\end{aligned}
\end{equation}
for all $|x| > R$ and some $m > 0$, where the last inequality holds provided that $R$ is sufficiently large compared to $r^{-1}$.
Furthermore, since $(1-r)|x| \le |x-y| \le (1+r)|x|$ for $y \in B^n(0,r|x|)$ and
\[\lim_{\ep \to 0} \int_{\R^n} \tu_{\ep}^{q_{\ep}}(y) dy = \int_{\R^n} \mcu^{q_0}(y) dy = C_1,\]
one can choose $R > 1$ large and $r > 0$ small so that
\[\(1-{\delta \over 3}\) \frac{g_{n,s}C_1}{|x|^{n-2s}} \le \int_{B^n(0, r|x|)}\frac{g_{n,s}}{|x-y|^{n-2s}} \tu_{\ep}^{q_{\ep}}(y) dy
\le \(1+{\delta \over 3}\) \frac{g_{n,s}C_1}{|x|^{n-2s}}\]
for every $|x| > R$.

Putting this, \eqref{eq-b-5}-\eqref{eq-b-4} and \eqref{eq-b-2} together, we finally get the desired estimate \eqref{eq-b-11}.
\end{proof}

\begin{lem}\label{lem-b-2}
Let $p = n/(n-2s)$. Then
\[\lim_{\ep \to 0} \frac{1}{\log \lambda_{\ep}} \int_{\Omega_{\ep}} \tv_{\ep}^{p} (x) dx = (g_{n,s} C_1)^{\frac{n}{n-2s}} |\ms^{n-1}|.\]
\end{lem}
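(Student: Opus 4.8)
The plan is to split $\Omega_\ep$ into three pieces governed by a small parameter $r>0$ and a large parameter $R>1$: the inner ball $B^n(0,R)$, the intermediate annulus $\{R\le|x|\le\lambda_\ep r\}$, and the outer piece $\Omega_\ep\setminus B^n(0,\lambda_\ep r)$. Since $p=n/(n-2s)$ gives $(n-2s)p=n$, the integrand $\tv_\ep^p$ behaves like $|x|^{-n}$ in the range where $\tv_\ep(x)\approx g_{n,s}C_1|x|^{-(n-2s)}$, and $\int|x|^{-n}\,dx$ over a scale-$\lambda_\ep$ annulus is logarithmic; hence the whole factor $\log\lambda_\ep$ is produced by the intermediate annulus, while the inner ball and the outer piece should contribute only $O(1)$ as $\ep\to0$.

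On the intermediate annulus I would invoke Lemma \ref{lem-b-1}: given $\delta>0$ it supplies $r>0$ small and $R>1$ large so that $(1-\delta)g_{n,s}C_1|x|^{-(n-2s)}\le\tv_\ep(x)\le(1+\delta)g_{n,s}C_1|x|^{-(n-2s)}$ for $x\in\Omega_\ep$ with $R\le|x|\le\lambda_\ep r$. Because $x_\ep$ stays at a positive distance from $\pa\Omega$ (this is where Lemma \ref{lem-bb} together with $\lambda_\ep\to\infty$ enters, exactly as in the proof that $x_0\in\Omega$), for $r$ small and $\ep$ small the whole annulus lies inside $\Omega_\ep$; raising to the power $p$ and integrating in polar coordinates then traps $\int_{\{R\le|x|\le\lambda_\ep r\}}\tv_\ep^p\,dx$ between $(1-\delta)^p(g_{n,s}C_1)^p|\ms^{n-1}|\,(\log(\lambda_\ep r)-\log R)$ and the same quantity with $(1+\delta)^p$. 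After dividing by $\log\lambda_\ep$ and letting $\ep\to0$ (so $\lambda_\ep\to\infty$ by Lemma \ref{lem-lam}), the quotient $(\log(\lambda_\ep r)-\log R)/\log\lambda_\ep$ tends to $1$.

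For the two leftover pieces I would use the global bound of Proposition \ref{lem-gu}, say in the form $\tv_\ep(x)\le C(1+|x|)^{-(n-2s)}$ (pick a specific admissible $\wtmcv$). On $B^n(0,R)$ this gives $\int_{B^n(0,R)}\tv_\ep^p\,dx\le C_R$ uniformly in $\ep$. On the outer piece, for $|x|\ge\lambda_\ep r$ we have $\tv_\ep^p(x)\le C|x|^{-n}$, and since $\Omega_\ep=\lambda_\ep(\Omega-x_\ep)$ has diameter at most $C\lambda_\ep$, integrating over $\{\lambda_\ep r\le|x|\le C\lambda_\ep\}$ yields $\int_{\Omega_\ep\setminus B^n(0,\lambda_\ep r)}\tv_\ep^p\,dx\le C|\ms^{n-1}|\log(C/r)$, again independent of $\ep$. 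Dividing everything by $\log\lambda_\ep$ and sending $\ep\to0$, the inner and outer contributions vanish, so $\limsup_{\ep\to0}$ and $\liminf_{\ep\to0}$ of $\frac1{\log\lambda_\ep}\int_{\Omega_\ep}\tv_\ep^p\,dx$ both lie between $(1-\delta)^p(g_{n,s}C_1)^p|\ms^{n-1}|$ and $(1+\delta)^p(g_{n,s}C_1)^p|\ms^{n-1}|$; since $\delta>0$ is arbitrary and $p=n/(n-2s)$, the limit equals $(g_{n,s}C_1)^{n/(n-2s)}|\ms^{n-1}|$.

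The argument has no serious obstacle: it reduces to the sharp two-sided asymptotic of $\tv_\ep$ on the self-similar annulus (Lemma \ref{lem-b-1}) and the global upper bound (Proposition \ref{lem-gu}), both already in hand. The only point deserving attention is checking that the outer tail is genuinely $O(1)$ and not a second $\log\lambda_\ep$; this is forced by $\textnormal{diam}\,\Omega_\ep\le C\lambda_\ep$, which caps $\int_{\{|x|\ge\lambda_\ep r\}\cap\Omega_\ep}|x|^{-n}\,dx$ at $\log(C/r)$, and is precisely why the middle cutoff is placed at radius comparable to $\lambda_\ep$.
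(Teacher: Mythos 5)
Your proposal is correct and follows essentially the same route as the paper's proof: the same three-piece decomposition at radii $R$ and $\lambda_{\ep}r$, Lemma \ref{lem-b-1} for the logarithmic contribution of the intermediate annulus, and Proposition \ref{lem-gu} to bound the inner ball and the outer tail by $O(1)$ before dividing by $\log\lambda_{\ep}$ and letting $\delta \to 0$. Your remark that the outer tail is capped at $\log(C/r)$ because $\textnormal{diam}\,\Omega_{\ep} \le C\lambda_{\ep}$, and that the annulus sits inside $\Omega_{\ep}$ since $x_{\ep}$ stays away from $\pa\Omega$, makes explicit two points the paper leaves implicit.
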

\begin{proof}
We write
\[\int_{\Omega_{\ep}} \tv_{\ep}^p(x) dx
= \int_{\{R \le |x| \le \lambda_{\ep} r\}} \tv_{\ep}^p(x) dx + \int_{B^n(0,R)} \tv_{\ep}^p (x) dx + \int_{\Omega_{\ep} \setminus B^n(0, \lambda_{\ep} r)} \tv_{\ep}^p(x) dx\]
compute each term in the right-hand side.
For the first term, we use Lemma  \ref{lem-b-1} to find that
\begin{align*}
(1-\delta)^{\frac{n}{n-2s}} (g_{n,s} C_1)^{\frac{n}{n-2s}} |\ms^{n-1}|  \log \lambda_{\ep} - O(1)
&\le \int_{\{R \le |x| \le \lambda_{\ep} r\}} \tv_{\ep}^p(x) dx \\
&\le (1+\delta)^{\frac{n}{n-2s}} (g_{n,s} C_1)^{\frac{n}{n-2s}} |\ms^{n-1}| \log \lambda_{\ep} + O(1).
\end{align*}
On the other hand, Proposition \ref{lem-gu} shows
\[\int_{B^n(0,R)} \tv_{\ep}^p (x) dx + \int_{\Omega_{\ep} \setminus B^n(0, \lambda_{\ep} r)} \tv_{\ep}^p(x) dx = O(1).\]
Combining the above estimates yields
\begin{align*}
(1-\delta)^{\frac{n}{n-2s}} (g_{n,s} C_1)^{\frac{n}{n-2s}} |\ms^{n-1}|
&\le \liminf_{\ep \to 0} \frac{1}{\log \lambda_{\ep}} \int_{\Omega_{\ep}} \tv_{\ep}^{p} (x) dx
\\
&\le \limsup_{\ep \to 0} \frac{1}{\log \lambda_{\ep}} \int_{\Omega_{\ep}} \tv_{\ep}^{p} (x) dx
\le (1+\delta)^{\frac{n}{n-2s}} (g_{n,s} C_1)^{\frac{n}{n-2s}} |\ms^{n-1}|.
\end{align*}
Taking $\delta \to 0$ gives the result.
\end{proof}

\end{document}